\documentclass[11pt]{article}

\usepackage{graphicx,color,amsfonts,amsmath,amssymb,enumerate}
\usepackage{url,fancyhdr,indentfirst}
\usepackage{amsthm,natbib,comment}
\usepackage[margin=1in]{geometry}
\usepackage{rotating}
\usepackage{booktabs}
\usepackage{dsfont,bm,mathrsfs}
\usepackage{tikz-qtree,float}
\usepackage{caption}
\usepackage{subcaption}
\usepackage{setspace}
\usepackage{footmisc,multirow}
\usepackage[compact]{titlesec}
\usepackage[colorlinks=true,citecolor=blue]{hyperref}

\def\d{\, \mathrm{d}}

\newcommand{\var}{\mathrm{Var}}

\newcommand{\VaR}{\mathrm{VaR}}

\newcommand{\CVaR}{\mathrm{CVaR}}

\newcommand{\E}{\mathbb{E}}

\newcommand{\R}{\mathbb{R}}
\newcommand{\N}{\mathbb{N}}
\newcommand{\p}{\mathbb{P}}

\renewcommand{\ge}{\geqslant}
\renewcommand{\le}{\leqslant}

\renewcommand{\leq}{\leqslant}
\renewcommand{\epsilon}{\varepsilon}

\renewcommand{\cdots}{\dots}

\newtheorem{example}{Example}

\newtheorem{definition}{Definition}

\theoremstyle{plain}
\newtheorem{theorem}{Theorem}
\newtheorem{lemma}{Lemma}
\newtheorem{proposition}{Proposition}
\newtheorem{corollary}{Corollary}
\theoremstyle{definition}
\theoremstyle{remark}
\newtheorem{remark}{Remark}
    \allowdisplaybreaks
\usepackage{setspace} 
\usepackage{footmisc,multirow,caption}
\renewcommand{\cite}{\citet}

\usepackage[compact]{titlesec}
\titlespacing{\section}{0pt}{8pt}{4pt}
\titlespacing{\subsection}{0pt}{6pt}{4pt}
\titlespacing{\subsubsection}{0pt}{5pt}{3pt}

\begin{document}

\title{Distributionally Robust Reinsurance under Robust Optimized Certainty Equivalent Risk Measure}

\author{Xinqiao Xie\thanks{School of Management, University of Science and Technology of China, China. \url{xxqyzb@mail.ustc.edu.cn}}   \and  Taizhong Hu\thanks{School of Management, University of Science and Technology of China, China. \url{thu@ustc.edu.cn}}   \and  Tiantian Mao\thanks{School of Management, University of Science and Technology of China, China.  \url{tmao@ustc.edu.cn}} }

\maketitle

\begin{abstract}

In this paper, we introduce a class of preference robust risk measures-\emph{robust optimized certainty equivalents} (ROCE)-which encompasses several widely used measures, including Conditional Value-at-Risk and expectiles, as special cases. Motivated by recent developments in distributionally robust optimal reinsurance (DROR), we investigate DROR problems under the ROCE risk measure and consider two prominent uncertainty sets: the mean-variance uncertainty set and the Wasserstein uncertainty set.
For the mean-variance uncertainty set, we reformulate the infinite-dimensional optimization problem into a finite-dimensional one by showing that it suffices to consider three-point distributions. This leads to a unified and explicit formulation for a broad class of ROCE risk measures and offers a simplified framework that also recovers earlier results for Conditional Value-at-Risk and expectiles.
For the Wasserstein uncertainty set, we also derive a tractable finite-dimensional formulation. The resulting data-driven models enable efficient computation and facilitate a systematic comparison between moment-based and Wasserstein-based uncertainty sets in the optimal deductible design. Numerical experiments are exhibited to illustrate the performance of our reformulated programs. 
\medskip

\noindent \emph{Keywords}: Robust optimized certainty equivalent; distributional robustness; Wasserstein ball; Mean-variance uncertainty set 

\noindent \emph{MSC2000 subject classification}: 62P05, 91B30

\noindent \emph{JEL classification}: D81, C61, G22

\end{abstract}

\noindent\rule{\textwidth}{0.5pt}

\section{Introduction}

The optimal reinsurance problem has garnered significant attention since the pioneering works of \cite{B60} and \cite{A63}, and has been widely explored in the literature; see, for example, {\cite{CYYZ20}, \cite{YCZ24}, \cite{CLY24}} and
\cite{CC20} for a review. The classic reinsurance problem aims to identify an optimal reinsurance contract that minimizes the total retained risk under a given risk measure. For a ground-up loss $X$, it can be formulated by
\begin{align*}
   \min_{I\in{ \mathcal {I}}}\, \rho (X-I(X)+\pi (I(X)) ),
\end{align*}
where $\rho$ denotes a risk measure, $I$ is a candidate reinsurance contract, and the set of reinsurance contracts $\mathcal{I}$ is usually chosen as the set of admissible stop-loss reinsurance contracts, defined as
\begin{align*}
    \mathcal{I} = \{ I_d : I_d(x) = (x-d)_+, \, d \in \mathbb{R}_+ \},
\end{align*}
with $x_+ := \max\{x,0\}$ for $x\in\R$. The premium principle is given by
$ 
\pi(I(X)) = (1+\theta)\,\mathbb{E}[I(X)],
$ 
where $\theta \in \mathbb{R}_+$ is the safety loading.  
For a stop-loss contract $I(X) = (X-d)_+$, the parameter $d \in \R_+$ is referred to as the \emph{deductible}.
In classic reinsurance design,  the distribution of $X$, faced by the insurer, is assumed to be precisely known.  
In practice, however, the true loss distribution is not directly observable and is inferred from limited data and modeling assumptions. Consequently, statistical estimates are subject to sampling error, model misspecification, and uncertainty in tail behavior. These considerations call for incorporating model uncertainty into reinsurance design and risk evaluation. A well-established approach is the distributionally robust optimization (DRO) framework (see \cite{DY10}, \cite{MK18}), in which decisions are assessed against an uncertainty set of probability measures consistent with the available statistical information. In this paper, we study the following distributionally robust reinsurance optimization (DROR) problem
\begin{equation}
 \label{00}
    \min_{d\in\R_+} \sup_{F \in\mathcal{P}} \rho^F\left(X\wedge d +(1+\theta)\E^F[\left(X-d\right)_+]\right) 
\end{equation}
where $F$ is the cumulative distribution function (cdf) of $X$, and $\mathcal{P}$ is the set of probability distributions, known as the uncertainty set. Here and throughout the paper, the superscript $F$ in $\rho^F$ 
indicates that the corresponding functional is calculated under the assumption that the cdf of $X$ is $F$.  The optimal value of $d$ to problem \eqref{00} is called the optimal deductible and is denoted by $d^*$. The choice of the risk measure used to assess retained losses is crucial for optimal reinsurance design. Existing studies have considered various risk measures within the DRO framework. For example, \cite{LM21} investigate the problem in which $\rho$ is the Value-at-Risk (VaR) or Conditional Value-at-Risk (CVaR), whereas \cite{XLMZ23} focus on the case where 
$\rho$ is the expectile.
We refer readers to \cite{CLY24}, \cite{CJM25} and \cite{BJ24,BJ25a,BJ25} for alternative DROR settings. 

 The \emph{optimized certainty equivalent} (OCE) is a widely used convex risk measure, originally proposed in decision theory by \cite{BT86}. 
For an increasing convex loss (disutility) function $\ell$, the OCE of a loss $X$ is
\begin{align}\label{eq:OCE}  
\rho(X):=\inf _{t \in \mathbb{R}}\left\{t+\mathbb{E}\big[\ell(X-t)\big]\right\}.
\end{align}
As shown in \cite{RU00}, CVaR arises as a special case of the OCE. Using OCE in risk assessment requires specifying an appropriate loss function $\ell$ to represent the decision maker's preferences. 
In practice, however, the available preference information is often partial:  {it may come from qualitative requirements (e.g., risk aversion, convexity, normalization),
or from limited elicitation data such as questionnaire-based assessments that provide only a few certainty equivalents,
or pairwise comparisons between simple lotteries,}  which makes it difficult to identify the true loss function $\ell$.
A standard ambiguity-averse approach in preference robust optimization is to construct an ambiguity set $\Gamma$ of plausible loss functions and evaluate risk under the worst-case element of $\Gamma$; see, e.g., \cite{MMR06}, \cite{AD15} and \cite{GuoXu24}. 
Motivated by this preference robust principle, we introduce a preference robust extension of OCE, referred to as the \emph{robust optimized certainty equivalent} (ROCE), defined by
$$
\rho(X):=\sup_{\ell\in \Gamma}\, \inf_{t\in\mathbb{R}}\left\{t+\mathbb{E}\big[\ell(X-t)\big]\right\}.
$$ 
We will show later that ROCE includes $\CVaR$ and expectiles as special cases. As a preference robust version of OCE, it inherits standard OCE properties such as monotonicity, convexity, translation invariance, and law invariance, which we later leverage for tractability. We refer readers to \cite{BDT20}, \cite{WX22} and \cite{WMH24} for other related generalizations of OCE.\footnote{Related generalizations pursue different robustness axes. \cite{BDT20} develop distributionally robust OCE risk measure over an uncertainty set of probability distributions (e.g., Wasserstein balls), derive convex dual representations, and show that the resulting problems often reduce to finite-dimensional programs with applications to option pricing. \cite{WX22} propose a preference robust modified OCE (RMOCE) that maximizes a utility functional and guards against utility misspecification via an uncertainty set (e.g., Kantorovich balls), yielding an alternating linear-programming scheme and statistical robustness guarantees. It is worth noting that RMOCE coincides with our ROCE risk measure in certain cases-for example, whenever a minimax theorem holds for RMOCE. \cite{WMH24} introduce a generalized OCE based on variational preferences by replacing the linear expectation with a general law-invariant convex risk measure (and linking to rank-dependent utility).} {By employing ROCE in our DROR problem, we incorporate both preference ambiguity and distributional ambiguity into decision making.}

In the DRO framework, it is important to choose an appropriate distributional uncertainty set based on the available information about the true distribution. 
In this paper, we focus on the mean-variance uncertainty set and the Wasserstein ball, both of which are widely used in DRO due to their tractability and data-driven motivation; see \cite{DY10}, \cite{MK18} and references therein. 
Our main contribution is to derive tractable reformulations of the resulting DROR problems under the two uncertainty sets for general ROCE risk principles. 
In particular, since $\CVaR$ and expectiles are special cases of ROCE, we provide a unified  proof of the main result in \cite{LM21} and \cite{XLMZ23}. Besides, we also derive the closed form solution of the DROR problem based on  the mean-CVaR for the mean-variance uncertainty set, which generalizes the result of \cite{LM21}.  

The rest of the paper is organized as follows. In Section \ref{sect-2}, we introduce the ROCE risk measure and present several important examples, including CVaR and expectiles. The finite-dimensional reformulations of problem \eqref{00} under the mean-variance uncertainty set and the Wasserstein uncertainty set are developed in Sections \ref{main-sect} and \ref{main-wasser}, respectively. 
We also illustrate how these results specialize to classical risk measures and discuss extensions to high-dimensional risk in Sections~\ref{main-sect} and \ref{main-wasser}. {We provide numerical experiments in Section \ref{sect-4}, which highlight the performance of our DROR model under the mean-variance uncertainty set compared with classical parametric models.  We also explore the sensitivity of the DROR model under the Wasserstein uncertainty set to the radius, and present a comparison of the out-of-sample behavior of the two DROR models and sample average approximation (SAA) benchmark.} We defer all proofs to Appendix.

\section{Robust Optimized Certainty Equivalent}
\label{sect-2}

Let $(\Omega,\mathcal{F},\p)$ be  a standard probability space and denote by $L_0=L_0(\Omega,\mathcal{F},\p)$ the set of all random variables on $(\Omega,\mathcal{F},\p)$. 
For a set $A\subseteq \R$, let ${\rm int}\,A$ be the set of interior points of $A$, and ${\rm conv}(A)$ be the convex hull of $A$.  For a function $\ell: \R\to\R$, define  $\partial \ell(\R)={\rm conv} \{\ell'(t),\ t \in \R\}$,  where $\ell' $ denotes the left-derivative of $\ell$.

\begin{definition}
 \label{def-ROCE}
Let $\Gamma$ be a set of increasing convex functions $\ell:\R\to\R$ satisfying 
$1\in {\rm int}\,\partial \ell(\R)$ for every $\ell\in\Gamma$. Define $\rho:\mathcal X\to\R$ with $\mathcal X:=\{X\in L_0:\E\!\left[\ell\left(X-t\right)\right]\in\R,\ 
\forall\,t\in\R,\ \forall\,\ell\in\Gamma\}$ as 
\begin{align}\label{rho_p}
\rho(X)=\sup_{\ell\in \Gamma}\, \inf_{t\in\R} \left\{t+\E\left[\ell\left(X-t\right)\right]\right\},~X\in\mathcal X.
\end{align}
 The functional $\rho$ is called a robust optimized certainty equivalent (ROCE) risk measure.
\end{definition}

When the index set $\Gamma$ is a singleton, the risk measure reduces to the classical \emph{optimized certainty equivalent} (OCE) risk measure \eqref{eq:OCE} introduced by \cite{BT86}. 
In general, we allow $\ell$ to vary over a family $\Gamma$ that represents all loss functions compatible with the available preference information, and define ROCE as the worst-case OCE over $\Gamma$. To ensure well-definedness and exclude trivial cases, we impose (and henceforth assume) that
$1\in \mathrm{int}\,\partial \ell(\mathbb{R})$ for all $\ell\in\Gamma$. Specifically, this condition guarantees that, for each $\ell\in\Gamma$, the inner minimization over $t$ in the definition of ROCE \eqref{rho_p} admits a minimizer (i.e., the infimum is attained); see \cite{WMH24} for details. 
We next study several fundamental properties of ROCE as a risk measure.

\begin{proposition}
    Let $\rho$ be an ROCE risk measure defined on $\mathcal X$. Then
    \begin{itemize}
        \item[(i)]{\rm{(Monotonicity)}} If $X,Y\in\mathcal X$ and $X\le Y$, then $\rho(X)\le\rho(Y)$.
        \item[(ii)]{\rm(Translation-invariance)} For every $X\in\mathcal X$ and $c\in\mathbb{R}$, $\rho(X+c)=\rho(X)+c$.
        \item[(iii)]{\rm(Convexity)} For every $X,Y\in\mathcal X$ and $\lambda\in[0,1]$, $\rho(\lambda X+(1-\lambda)Y)\le \lambda\rho(X)+(1-\lambda)\rho(Y)$.
        \item[(iv)]{\rm(Positive homogeneity) }If every $\ell\in\Gamma$ is positively homogeneous, i.e.
        $\ell(\lambda y)=\lambda \ell(y)$ for all $\lambda\ge 0$, and  $y\in\mathbb{R},$
        then for all $\lambda\ge 0$ and $X\in\mathcal X$, $\rho(\lambda X)=\lambda\,\rho(X).$
    \end{itemize}
\end{proposition}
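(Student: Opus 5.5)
The plan is to prove each property first for a single OCE $\rho_\ell(X):=\inf_{t\in\R}\{t+\E[\ell(X-t)]\}$ with $\ell\in\Gamma$ fixed, and then pass to the supremum over $\ell\in\Gamma$. All four properties survive a pointwise supremum. Monotonicity does, because a supremum of increasing functionals is increasing. Translation-invariance does, because $\sup_\ell(\rho_\ell(X)+c)=\sup_\ell\rho_\ell(X)+c$. Convexity does, because a supremum of convex functionals is convex. Positive homogeneity does for $\lambda\ge0$, because $\sup_\ell \lambda\rho_\ell(X)=\lambda\sup_\ell\rho_\ell(X)$. Throughout, we use that $\mathcal X$ is closed under the operations involved, so that every expectation is finite.

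\emph{Membership in $\mathcal X$.} For translation, $\ell(X+c-t)=\ell(X-(t-c))$, so $X+c\in\mathcal X$. For convexity, $\ell(\lambda X+(1-\lambda)Y-t)\le \lambda\ell(X-t)+(1-\lambda)\ell(Y-t)$ gives a finite upper bound. A lower bound comes from the affine minorant $\ell(y)\ge \ell(0)+sy$ with $s=\ell'(0)\ge0$, together with the integrability of $X$ and $Y$. That integrability follows from $1\in{\rm int}\,\partial\ell(\R)$: $\ell$ has slopes strictly below and above $1$, so $\ell(y)\ge a+b y_+$ for some $b>0$, and finiteness of $\E[\ell(X-t)]$ controls $\E[X_+]$. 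Monotone and homogeneous cases are treated similarly. For homogeneity, $\ell(\lambda X-t)=\lambda\ell(X-t/\lambda)$ when $\lambda>0$.

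\emph{Single-$\ell$ arguments.}
\begin{itemize}
\item[(i)] If $X\le Y$ and $\ell$ is increasing, then $t+\E[\ell(X-t)]\le t+\E[\ell(Y-t)]$ for every $t$. Taking infima gives $\rho_\ell(X)\le\rho_\ell(Y)$.
\item[(ii)] Substitute $t=s+c$: then $\inf_t\{t+\E[\ell(X+c-t)]\}=c+\inf_s\{s+\E[\ell(X-s)]\}$.
\item[(iii)] Fix any $t_1,t_2$ and put $t=\lambda t_1+(1-\lambda)t_2$. Convexity of $\ell$ gives
$t+\E[\ell(\lambda X+(1-\lambda)Y-t)]\le \lambda(t_1+\E[\ell(X-t_1)])+(1-\lambda)(t_2+\E[\ell(Y-t_2)])$. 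Take the infimum over $t_1,t_2$. This is a standard epigraph/joint-convexity argument and requires no attainment of minimizers.
\item[(iv)] For $\lambda>0$, substitute $t=\lambda s$ and use $\ell(\lambda(X-s))=\lambda\ell(X-s)$, which gives $\rho_\ell(\lambda X)=\lambda\rho_\ell(X)$. For $\lambda=0$, homogeneity forces $\ell(0)=0$, and $\ell(y)=y\ell(1)$ for $y\ge0$ and $y\,|\ell(-1)|$-type linear behavior for $y<0$. Then $\rho_\ell(0)=\inf_t\{t+\ell(-t)\}$, which equals $0$ because the slopes of $\ell$ on the two sides bracket $1$: $t+\ell(-t)=t(1-\ell(-1)\cdot(-1)^{-1}\ldots)$ is $\ge0$ with equality at $t=0$.
\end{itemize}

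\emph{Main obstacle.} The hardest part is the bookkeeping: ensuring that $\rho$ is finite rather than $+\infty$ after taking the supremum over $\Gamma$, and settling the $\lambda=0$ case of (iv) cleanly. For the latter, I will argue directly. A positively homogeneous convex increasing $\ell$ is piecewise linear with slopes $a\le b$, and $1\in{\rm int}[a,b]$. Hence $t+\ell(-t)=t(1-a)\ge0$ for $t\ge0$ and $t(1-b)\ge0$ for $t\le0$, so the infimum is $0$. If finiteness of the supremum is not guaranteed, I will interpret the inequalities in $(-\infty,\infty]$, where all four arguments remain valid.
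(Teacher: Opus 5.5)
Your argument is correct and follows the route the paper intends. The paper writes out no proof of this proposition. It only remarks that ROCE, as a supremum of OCEs, inherits these properties from OCE, and that is exactly your single-$\ell$-then-supremum reduction. Your four single-$\ell$ computations are fine. The joint-convexity argument in (iii) needs no minimizers. For $\lambda=0$ in (iv), use your clean slope version ($t(1-a)\ge 0$ for $t\ge 0$, $t(1-b)\ge 0$ for $t\le 0$) and drop the garbled line inside (iv).

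One step of your membership bookkeeping is wrong as written and needs repair.

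\emph{The gap.} To show $\E[\ell(\lambda X+(1-\lambda)Y-t)]>-\infty$, you use the affine minorant $\ell(0)+sy$ together with ``integrability of $X$ and $Y$''. But $X\in\mathcal X$ only controls $\E[X_+]$, not $\E[X_-]$. For example, take $\ell(y)=-\log(1-y)$ for $y\le 0$ and $\ell(y)=2y$ for $y>0$. This $\ell$ is increasing and convex with $\partial\ell(\R)=(0,2]$. Let $X=-W$ with $\E[\log(1+W)]<\infty=\E[W]$. Then $X\in\mathcal X$, yet any subgradient $s$ of $\ell$ at $0$ lies in $[1,2]$, so $\E[\ell(0)+s(X-t)]=-\infty$ and your lower bound is vacuous.

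\emph{A related slip.} The inequality $\ell(y)\ge a+by_+$ is false whenever $\ell$ is unbounded below, as for the expectile generators $\ell_\gamma$ or the mean-CVaR generator with $\eta_1>0$. What you want is the tangent line $\ell(y)\ge a+by$ at a point where the slope satisfies $b>1$. That does give $\E[X_+]<\infty$.

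\emph{The fix.} Use monotonicity instead of integrability. Since $\lambda X+(1-\lambda)Y\ge X\wedge Y$ and $\ell$ is increasing,
$\ell(\lambda X+(1-\lambda)Y-t)\ge \min\{\ell(X-t),\ell(Y-t)\}\ge -|\ell(X-t)|-|\ell(Y-t)|$.
The right-hand side is integrable. Combined with your convexity upper bound, this gives $\lambda X+(1-\lambda)Y\in\mathcal X$ with no moment assumptions.
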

ROCE encompasses many popular risk measures. In Example \ref{ex1} below, we show that ROCE specializes to 
$\CVaR$ and coherent expectiles as special cases.

\begin{example}\label{ex1}
\begin{itemize}
  \item [{\rm (i)}] {\rm (CVaR)}\ \ The $\CVaR$ at level $\alpha\in [0,1]$ is defined as \begin{align*}   \CVaR^F_{\alpha}(X)=\frac{1}{1-\alpha} \int_\alpha^1\VaR^F_\gamma(X)\d\gamma,~ \alpha\in [0,1);~~\CVaR^F_1(X)=\VaR^F_1(X),\end{align*}  where \begin{align*}	\VaR^F_{\alpha}(X):=\inf\{x\in\R:  F(x) > \alpha\},~\alpha\in [0,1);~~\VaR^F_1(X):=\inf\{x\in \R:  F(x) \ge 1\}.\end{align*} By \cite{RU00}, $\CVaR_\alpha$ admits the representation
	\begin{align}\label{eq:RU02}
    	\CVaR^F_\alpha(X)=\inf_{t \in \R} \left\{t+\frac{1}{1-\alpha}\E^F[(X-t)_+]\right\}.
	\end{align}
 It follows that $\CVaR_\alpha$ is an ROCE risk measure with $\Gamma$ being a singleton $\{\ell\}$, where $\ell (z)= {z_+}/({1-\alpha})$ for $z\in\R$.

\item [{\rm (ii)}] {\rm (Expectile)}\ \ For $\beta\in [1/2,1]$, the expectile at level $\beta$ is defined as \begin{align*} 	e^F_\beta(X)=\arg\min_{x \in \R} \left\{\beta \E^F [(X-x)_+^2]         +(1-\beta)\E^F [(x-X)_+^2]\right\}.\end{align*}
By Proposition 9 of \cite{BKMG14} and \eqref{eq:RU02}, $e_\beta$ admits the ROCE representation
 \begin{align*}
     e^F_\beta(X)=&\sup _{\gamma \in\left[{1}/{\nu}, 1\right]}\left\{(1-\gamma) \CVaR^F_{\frac{\nu\gamma-1}{\nu\gamma-\gamma}}(X)+\gamma \E^F[X]\right\}\\
     =&\sup _{\gamma \in\left[{1}/{\nu}, 1\right]}\inf_{t \in \R} \left\{t+{\gamma(\nu-1)}\E^F[(X-t)_+]+\gamma\E^F[X-t]\right\}\\
     =&\sup _{\gamma \in\left[{1}/{\nu}, 1\right]} \inf_{t \in \R} \left\{t+ \E^F[\ell_\gamma(X-t) ] \right\} ,
 \end{align*}
where $\nu:={\beta}/({1-\beta})$ and  
 $\ell_\gamma (z)=\gamma(\nu-1)z_++\gamma z=\gamma\nu z_+-\gamma(-z)_+.$ 
\end{itemize}
\end{example}

As mentioned above, the ROCE risk measure also contains the OCE risk measure as a special case. We list some popular OCE risk measures in the following example.

\begin{example}
  \label{ex2}
\begin{itemize}
  \item[{\rm (i)}] {\rm (Piecewise linear loss function)}\ \ For $ \eta_1$ and $\eta_2$ with $0 \leq \eta_1<1<\eta_2$, define a  piecewise linear loss function
     $$
            \ell (z)=\eta_2 z_+ - \eta_1 (-z)_+, \quad z\in\R.
     $$
     The corresponding OCE risk measure  defined by \eqref{eq:OCE}, introduced in \cite{PR05}, is formulated as
     $$
          \rho^F (X)=\eta_1\E ^F[X]+(1-\eta_1)\CVaR^F_{\xi}(X),
     $$
     where $\xi=({\eta_2-1})/({\eta_2-\eta_1})$. Moreover, the corresponding OCE risk measure is well-known as mean-CVaR risk measure. In particular, when $\eta_1=0$, $\rho$ reduces to $\CVaR_{\xi}$.

\item[{\rm (ii)}] {\rm (Monotone HARA loss function)}\ \ For $\gamma<0$, the monotone HARA loss function, introduced in \cite{CMMR12}, is defined as
    $$
         \ell (z)=\frac{1-(1-z/ \gamma)_+^{1-\gamma}}{1 / \gamma-1}, \quad z\in\R.
    $$
    Note that the risk measure defined by \eqref{eq:OCE} with the monotone HARA loss function $\ell (z)$ is an OCE risk measure. In particular, when $\gamma=-1$, the corresponding OCE risk measure $\rho$ reduces to the {monotone mean-variance}.

\item[{\rm (iii)}] {\rm (Exponential loss function)}\ \ Given $\lambda>0$, the entropic risk measure, introduced in \cite{FS10}, is defined as
    $$   \rho^F (X)=\frac{1}{\lambda}\log \E^F [e^{\lambda X}].   $$
    One can verify that the entropic risk measure is an OCE risk measure generated by the exponential loss function
    $$     \ell(z)=\frac{1}{\lambda}(e^ {\lambda z}-1),~~z\in\R.    $$
\end{itemize}
\end{example}
\begin{remark}
    \cite{WX22} proposed a preference robust modified OCE (RMOCE). Specifically,  given a baseline utility function $v$ for the current consumption level $t$, let $\mathcal U$ be the set of nondecreasing concave utility functions. They consider the RMOCE defined as 
$$
\rho^F(X)=\;\sup_{t\in\mathbb{R}}\,\inf_{u\in\mathcal{U}}\Big\{v(t)+\E^F\big[u(X-t)\big]\Big\}.
$$
We show that RMOCE can be viewed as a generalization of the ROCE risk measure in decision theory; in particular, when a minimax theorem holds, the two formulations coincide.  Moreover, the classical OCE is nested in both ROCE and RMOCE as a special case. Consequently, CVaR and the OCEs in Example~\ref{ex2} are also covered by RMOCE. In addition, expectiles fall under RMOCE, {since the relevant objective function is linear in the preference index $\gamma$}, allowing an application of a minimax theorem. 
\end{remark}

\section{Mean-variance uncertainty set}
\label{main-sect}
The mean-variance uncertainty set goes back to the seminal work of \cite{S58a}, who studied the classical newsvendor problem under moment information (known mean and variance), a formulation that is often viewed as the prototype of DRO. For $\mu,\,\sigma\in\R_+$, we define the mean-variance uncertainty set as
$$
    S(\mu, \sigma) =\left\{F \in \mathcal{P}\left(\mathbb{R}_+\right): \E^{F}[X]=\mu, ~\var^F(X)= \sigma^2\right\},
$$
where $\mathcal{P}\left(\mathbb{R}_+\right)$ denotes the set of all probability distributions supported on $\R_+$. In recent years, such mean-variance uncertainty sets have been widely used in the DRO framework; see, for example, \cite{DY10} and the references therein.

In this section, we focus on the distributionally robust reinsurance optimization problem with the mean-variance uncertainty set $S(\mu,\sigma)$:
\begin{align}\label{eq:main}
	\min_{d\in\R_+} \sup_{F \in S(\mu, \sigma)} \rho^F\left(X\wedge d +(1+\theta)\E^F[\left(X-d\right)_+]\right),
\end{align}
where $\rho: \mathcal{X}\to \R$ is an ROCE risk measure and $\theta\in\R_+$ is the safety loading. We derive a tractable reformulation for a general ROCE risk measure $\rho$ and then apply it to specific cases, including mean-CVaR and expectiles, to obtain further simplified optimization programs.  In particular, we obtain a closed-form solution for mean-CVaR.
\subsection{Reformulation}\label{main-mean-variance}

A key challenge in solving problem \eqref{eq:main} lies in addressing its inner maximization problem: 
\begin{align}\label{eq:main-inner}
	  \sup_{F \in S(\mu, \sigma)} \rho^F\left(X\wedge d +(1+\theta)\E^F[\left(X-d\right)_+]\right).
\end{align} 
Because the supremum is taken over a set of distributions, \eqref{eq:main-inner} is infinite-dimensional and is therefore computationally intractable in its original form. The main result of this section is to derive a tractable reformulation of this inner maximization problem. \eqref{eq:main-inner} is called a worst case distribution. 
Intuitively, we reformulate this inner problem to a finite-dimensional one by showing that 
it suffices to consider three-point distributions. To state it,  define 
$$
    S_3(\mu, \sigma)=\left\{F \in S(\mu,\sigma): F \mbox{ is a three-point cdf}\right\},
$$ 
where a distribution is called a three-point cdf if its support consists of no more than three points.  

\begin{theorem}
 \label{thm-rho_p}
Let $ \rho: \mathcal{X}\to \R$ be an ROCE risk measure with a set of increasing convex functions $\Gamma$, and fix $\mu,\,\sigma\in\R_+$ and $\theta\in\R_+$. Then, for $d\ge 0$, the problem \eqref{eq:main-inner} is equivalent to 
\begin{align}\label{eq:rho_p}
    \sup_{\ell \in \Gamma}~ \inf_{t \in \R}~ \sup_{F \in S_3(\mu, \sigma)} ~\E^F[\tilde{\ell}(X,d,t)],
\end{align}
where  
\begin{align}\label{c_ga_func}
     \tilde{\ell}(x,d,t) := t+\ell\left(x\wedge d-t\right) +(1+\theta) (x-d)_+,~~~ \ell\in\Gamma.
 \end{align} 
\end{theorem}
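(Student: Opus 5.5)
The plan is to peel off the premium term by translation invariance, exchange the supremum over distributions with the infimum over $t$ by a minimax argument, and finally reduce the inner linear problem over $S(\mu,\sigma)$ to three-point distributions.

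\textbf{Step 1 (removing the premium).} For fixed $F\in S(\mu,\sigma)$ and $d\ge0$, the quantity $c_F:=(1+\theta)\E^F[(X-d)_+]$ is a constant. Translation invariance (part (ii) of the Proposition on properties of ROCE) gives
$$\rho^F\big(X\wedge d+c_F\big)=\rho^F(X\wedge d)+c_F .$$
Writing out \eqref{rho_p} and moving $c_F$ inside the expectation yields $\rho^F(X\wedge d+c_F)=\sup_{\ell\in\Gamma}\inf_{t\in\R}\E^F[\tilde\ell(X,d,t)]$. Hence \eqref{eq:main-inner} equals
$$\sup_{\ell\in\Gamma}\,\sup_{F\in S(\mu,\sigma)}\,\inf_{t\in\R}\E^F[\tilde\ell(X,d,t)],$$
since two suprema commute. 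Integrability is not an issue: $\tilde\ell(\cdot,d,t)$ is bounded on $[0,d]$ and grows linearly beyond $d$, so its expectation is finite whenever the second moment is finite. It therefore suffices to show, for each fixed $\ell\in\Gamma$,
$$\sup_{F\in S(\mu,\sigma)}\inf_{t}\E^F[\tilde\ell]=\inf_t\sup_{F\in S_3(\mu,\sigma)}\E^F[\tilde\ell].$$

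\textbf{Step 2 (minimax, the main obstacle).} The map $F\mapsto\E^F[\tilde\ell(X,d,t)]$ is linear on the convex set $S(\mu,\sigma)$. The map $t\mapsto\E^F[\tilde\ell(X,d,t)]$ is convex and continuous because $\ell$ is convex. Sion's theorem needs compactness on the $t$-side, and the key point is that $X\wedge d\in[0,d]$ is bounded.

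Let $s_-<1<s_+$ be the limiting slopes of $\ell$ at $-\infty$ and $+\infty$; this is where $1\in{\rm int}\,\partial\ell(\R)$ is used. For $y\in[0,d]$, monotonicity of $\ell$ gives:
\begin{itemize}
\item $t+\ell(y-t)\ge t+\ell(-t)$, which tends to $+\infty$ as $t\to+\infty$ because $s_-<1$;
\item $t+\ell(y-t)\ge t+\ell(-t)$, which tends to $+\infty$ as $t\to-\infty$ because $s_+>1$.
\end{itemize}
The term $(1+\theta)(x-d)_+$ does not depend on $t$. Consequently there is a compact interval $[a,b]$, depending only on $\ell$, $d$, $\mu$, $\theta$ and not on $F$, such that the infimum over $t\in\R$ equals the infimum over $t\in[a,b]$ simultaneously for all $F\in S(\mu,\sigma)$. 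One compares with the value at $t=0$, which is bounded uniformly over $S(\mu,\sigma)$ because $\E^F[(X-d)_+]\le\mu$.

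Sion's minimax theorem (compact convex $[a,b]$, convex lower semicontinuous in $t$, concave in $F$) then gives
$$\sup_{F\in S}\inf_{t}\E^F[\tilde\ell]=\inf_{t}\sup_{F\in S}\E^F[\tilde\ell].$$
Restricting $t$ back to $\R$ is harmless: points outside $[a,b]$ only increase $\sup_F$.

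\textbf{Step 3 (reduction to three points).} For fixed $t$, $\sup_{F\in S(\mu,\sigma)}\E^F[h(X)]$ with $h=\tilde\ell(\cdot,d,t)$ is a linear moment problem with three equality constraints: total mass $1$, mean $\mu$, and second moment $\mu^2+\sigma^2$. By the Richter--Rogosinski/Winkler theorem (extreme points of moment sets are finitely supported with at most as many atoms as constraints), for every $F\in S(\mu,\sigma)$ with $\E^F|h|<\infty$ there is $G\in S_3(\mu,\sigma)$ with $\E^G[h]\ge\E^F[h]$. Hence $\sup_S=\sup_{S_3}$, since the reverse inequality is trivial because $S_3\subseteq S$. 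Combining Steps 1--3 and taking $\sup_{\ell\in\Gamma}$ gives \eqref{eq:rho_p}.

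I expect the only delicate point to be the uniform-in-$F$ compactification in Step 2. The boundedness of $X\wedge d$ is what makes it work, and otherwise one would have to handle noncompact $S(\mu,\sigma)$ directly.
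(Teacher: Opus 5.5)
Your proposal is correct and follows essentially the same route as the paper: rewrite the inner problem through $\tilde\ell$, confine $t$ to a compact interval independent of $F$ using $X\wedge d\in[0,d]$ and $1\in{\rm int}\,\partial\ell(\R)$, exchange $\sup_F$ and $\inf_t$ by Sion's theorem, and reduce to three-point laws via moment-problem theory (the paper cites Proposition 6.39 of Shapiro--Dentcheva--Ruszczy\'nski). The differences are cosmetic: you get the compact interval from a coercivity bound compared against $t=0$ rather than the paper's sign argument on the right derivative of $H_\ell(t,F)$, and you cite Richter--Rogosinski/Winkler instead of the textbook proposition.
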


We next give an observation about problem \eqref{eq:main-inner}: the feasible set $ S(\mu, \sigma)$ can be replaced by $ \overline{S}(\mu, \sigma)$ where
$$
\overline{S}(\mu, \sigma):=\left\{F \in\mathcal P(\R_+):\E^{F}[X]=\mu, ~{\rm Var}^F(X)\le \sigma^2\right\}.
$$ 
\begin{proposition}\label{pro1}
	Fix $d\ge 0$ and $\mu,\,\sigma\in\R_+$, and let $\rho: \mathcal{X}\to \R$ be an ROCE risk measure. Then problem \eqref{eq:main-inner} is equivalent to 
	$$ 
    \sup_{F \in\overline{S}(\mu, \sigma)} \rho^F\left(X \wedge d+(1+\theta)\E^F[(X-d)_+]\right).$$
\end{proposition}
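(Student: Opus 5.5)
Since $S(\mu,\sigma)\subseteq \overline{S}(\mu,\sigma)$, the supremum over $\overline{S}(\mu,\sigma)$ is at least the supremum over $S(\mu,\sigma)$. So the whole task is the reverse inequality: for any $G\in\overline{S}(\mu,\sigma)$ with $\var^G(X)=s^2<\sigma^2$, I will produce distributions $F_n\in S(\mu,\sigma)$ whose objective values are asymptotically at least that of $G$.

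First I write the objective for a fixed distribution $F$ through translation invariance (Proposition 1(ii)):
$$
J(F):=\rho^F(X\wedge d)+(1+\theta)\E^F[(X-d)_+].
$$
The key point is that moving mass to the far right tail does not decrease $J$. Indeed, $x\mapsto x\wedge d$ and $x\mapsto (x-d)_+$ are both increasing. Moreover, for $x> d$ the first term is frozen at $d$, while the second grows.

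Concretely, for $n$ large I set
$$
F_n=(1-p_n)G+p_n\delta_{a_n}, \qquad a_n=\mu+\frac{c_n}{p_n}.
$$
A pure point mass at a large location shifts the mean, so I correct this by combining it with a small shift of the whole distribution. A cleaner choice is
$$
F_n=(1-p_n)\,G\circ T_n^{-1}+p_n\delta_{a_n},
$$
where $T_n(x)=x-b_n$ (or a scaling $x\mapsto \lambda_n x$ with $\lambda_n\uparrow1$, which preserves nonnegativity) and $b_n\to0$. Here $p_n\to0$ and $a_n\to\infty$ are chosen so that the mean equals $\mu$ and the variance equals $\sigma^2$.

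The variance bookkeeping works as follows. The extra variance contributed by the atom is about $p_na_n^2$. I therefore choose $p_na_n^2\approx\sigma^2-s^2$, which gives $p_na_n\to0$. The mean correction needed is then $O(p_na_n)\to0$. I use the scaling $\lambda_n=(\mu-p_na_n)/((1-p_n)\mu)$ to keep support in $\R_+$. If $\mu=0$ then $\overline{S}$ is trivial, and that case is handled separately. Finally, one of the two parameters is solved for by continuity (intermediate value theorem) to hit the variance exactly.

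Next I show $\liminf_n J(F_n)\ge J(G)$. The reinsurance term satisfies
$$
\E^{F_n}[(X-d)_+]\ge (1-p_n)\E^{G}[(\lambda_nX-d)_+]\to \E^G[(X-d)_+]
$$
by dominated convergence. For the risk term, $X\wedge d$ is bounded, taking values in $[0,d]$, and under $F_n$ its law converges weakly to the law of $X\wedge d$ under $G$. I then need lower semicontinuity of $\rho$ along this sequence. For each fixed $\ell\in\Gamma$, the OCE $\inf_t\{t+\E[\ell(Y-t)]\}$ restricted to $[0,d]$-valued $Y$ can be taken over a compact $t$-interval, using $1\in{\rm int}\,\partial\ell(\R)$. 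On that interval $\ell$ is continuous and bounded, so the OCE is continuous under weak convergence of laws supported on $[0,d]$. Taking the supremum over $\ell$ preserves lower semicontinuity. This gives $\liminf_n \rho^{F_n}(X\wedge d)\ge\rho^G(X\wedge d)$.

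Combining the two terms, $\sup_{S}J\ge J(G)$ for every $G\in\overline{S}$, which proves the equivalence.

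The main obstacle is the continuity step for the ROCE term. It requires the uniform compactness of minimizing $t$ for laws on $[0,d]$: one can take $t\in[0,d]$ by monotonicity and translation arguments. It also requires checking that the simultaneous mean/variance/nonnegativity constraints can be met exactly. An alternative I would try first, if it is available later in the paper, is to note that $J$ is nondecreasing under the mean-preserving spread that sends mass to the right tail. That would avoid limits entirely, but it needs convexity in the right direction, so I would fall back on the approximation argument above.
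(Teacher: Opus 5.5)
Your argument is correct, but it takes a different route from the paper. The paper proves the reverse inequality by splitting into two cases.
\begin{itemize}
\item[(i)] If $\p(X>d)>0$, it replaces the conditional law on $\{X>d\}$ by a two-point law: one atom at $d$ and one atom above $d$. The atoms are chosen so that the law of $X\wedge d$ and the value $\E[(X-d)_+]$ are preserved exactly. The intermediate value theorem in the split weight then reaches variance $\sigma^2$, so the objective value of $G$ is reproduced exactly inside $S(\mu,\sigma)$.
\item[(ii)] If $X\le d$ a.s., it lowers part of the mass by $\epsilon$ and sends a small tail mass upward. This gives $X_\epsilon\ge X-\epsilon$ with variance $\sigma^2$, and monotonicity plus translation invariance bound the loss by $\epsilon$.
\end{itemize}

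You instead use one construction for every $G$: scale by $\lambda_n\uparrow 1$, add a far atom, and hit the variance by the intermediate value theorem. That step does close. Fix $a_n$ and vary $p\in[0,\mu/a_n]$: the second moment runs from $s^2+\mu^2$ to $\mu a_n$, and $p_na_n^2\le\mu^2+\sigma^2$ forces $p_na_n\to0$. You then obtain $\liminf_n\rho^{F_n}(X\wedge d)\ge\rho^G(X\wedge d)$ from weak continuity of each OCE on $[0,d]$-valued variables, followed by a supremum over $\ell$.

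What each route buys: yours avoids the case split and the quantile coupling. The paper's uses only monotonicity, translation invariance and law invariance, which is why its Remark extends the result to other risk measures, and in case (i) it gives exact attainment. You can recover that generality and drop the continuity step altogether. Since $a_n\ge d$ and $\lambda_n\le 1$, the law of $X\wedge d$ under $F_n$ first-order dominates that of $\lambda_n X\wedge d\ge \lambda_n(X\wedge d)\ge X\wedge d-(1-\lambda_n)d$ under $G$. Law invariance, monotonicity and translation invariance then give $\rho^{F_n}(X\wedge d)\ge\rho^G(X\wedge d)-(1-\lambda_n)d$ directly.

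Two small corrections.
\begin{itemize}
\item[(a)] Your aside that the minimizing $t$ can be taken in $[0,d]$ is false for general $\ell$. The minimizers lie in $[-y_2(\ell),\,d-y_1(\ell)]$, where $\ell'$ crosses $1$. What you actually use, a compact interval that depends on $\ell$ but not on the law, is correct and sufficient.
\item[(b)] The case $\mu=0$ cannot be ``handled separately''. For $\sigma>0$ one has $S(0,\sigma)=\emptyset$ while $\overline{S}(0,\sigma)=\{\delta_0\}$, so the statement itself tacitly requires $\mu>0$. The paper's proof assumes this as well.
\end{itemize}
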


\begin{remark}
	It is worth noting that Proposition \ref{pro1} holds for any  risk measure $\rho$ that is consistent with the increasing convex order; that is,  $\rho(X)\le \rho(Y)$ whenever $X\preceq_{\rm icx} Y$\footnote{For random variables  $X$ and $Y$, we say that $X$ is smaller than $Y$ in increasing convex order, denoted by $X\preceq_{\rm icx} Y$,  if $\E[f(X)] \le \E[f(Y)]$ for any increasing convex function $f$.}.
\end{remark}

Denote by $F=[x_1,p_1;x_2,p_2;x_3,p_3]$ a three-point distribution, where $x_1<x_2< x_3$, $p_i\ge 0$ for $i=1,2,3$, and $p_1+p_2+p_3=1$; that is, if $X\sim F$, then $\p(X=x_i)=p_i$ for $i=1,2,3$. By Theorem \ref{thm-rho_p} and Proposition \ref{pro1}, we can reformulate problem \eqref{eq:main-inner} as follows.
\begin{proposition}\label{pro: convexset}
    Under the condition of Theorem \ref{thm-rho_p},  problem \eqref{eq:main-inner} is equivalent to 
    \begin{align} \label{eq:OCE-three-convex}
\sup_{\ell\in \Gamma}~\inf_{t\in\R} \left\{ \begin{array}{ll} \max   &\sum_{i=1}^3 p_i \tilde{\ell}\left(\frac{y_i}{p_i},d,t\right),\\
 \text {\rm s.t. }  
 &
\sum_{i=1}^3 y_i=\mu, \ \ \sum_{i=1}^3 p_i=1,\ \\
&{\bf y}\in\R_+^3,~ {\bf p}\in\R_+^3 ,~\sum_{i=1}^3 p_i\left(\frac{y_i}{p_i}\right)^2\le\mu^2+\sigma^2
\end{array} \right\},
\end{align}
where $\tilde{\ell}(x,d,t)$ is defined in \eqref{c_ga_func}. We use the convention that $y_i^2/p_i=0$ whenever $y_i=p_i=0$.
\end{proposition}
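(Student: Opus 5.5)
The plan is to combine Theorem \ref{thm-rho_p} with Proposition \ref{pro1}, and then to reparametrize three-point distributions in a way that makes the feasible set convex.

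\emph{Step 1: reduce to three-point distributions with relaxed variance.} By Proposition \ref{pro1}, $S(\mu,\sigma)$ in \eqref{eq:main-inner} may be replaced by $\overline S(\mu,\sigma)$. I will then rerun the argument of Theorem \ref{thm-rho_p} with $\overline S(\mu,\sigma)$ in place of $S(\mu,\sigma)$ to obtain
$$\sup_{\ell\in\Gamma}\inf_{t\in\R}\sup_{F\in\overline S_3(\mu,\sigma)}\E^F[\tilde\ell(X,d,t)],$$
where $\overline S_3(\mu,\sigma)$ denotes the three-point cdfs in $\overline S(\mu,\sigma)$. The argument transfers because the inner problem in $F$ is a linear program in $F$ with two moment constraints: one equality for the mean and one inequality for the second moment. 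By the standard Richter--Rogosinski/Carathéodory reduction, the supremum of such a problem is attained, or approached, by distributions supported on at most three points. The interchange of $\sup_F$ and $\inf_t$ used in Theorem \ref{thm-rho_p} also needs checking. It relies on convexity in $t$ and linearity in $F$, and $\overline S(\mu,\sigma)$ is still convex, so it should carry over verbatim.

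A simpler route is to argue directly from \eqref{eq:rho_p}, though I did not settle which is better. Since $\tilde\ell(\cdot,d,t)$ is increasing and convex, we have $\sup_{\overline S_3}\ge\sup_{S_3}$ trivially. For the reverse inequality, any $F\in\overline S_3$ is dominated in $\preceq_{\rm icx}$ by some element of $S(\mu,\sigma)$, and the sup over $S$ equals the sup over $S_3$ by the reduction step inside Theorem \ref{thm-rho_p}. The domination is obtained by moving mass further out, as in Proposition \ref{pro1}.

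\emph{Step 2: change of variables.} Write $F=[x_1,p_1;x_2,p_2;x_3,p_3]$ with $x_i\ge0$ and set $y_i=p_ix_i\ge 0$. The constraints then become
$$\sum_i p_i=1,\qquad \sum_i y_i=\mu,\qquad \sum_i y_i^2/p_i\le \mu^2+\sigma^2,$$
and the objective becomes $\sum_i p_i\tilde\ell(y_i/p_i,d,t)$. Conversely, each feasible $({\bf y},{\bf p})$ with $p_i>0$ gives a feasible three-point distribution.

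Points with $p_i=0$ need separate treatment. If $p_i=0$, the constraint $\sum_i y_i^2/p_i<\infty$, read via the perspective closure, forces $y_i=0$. The convention $y_i^2/p_i=0$ then handles this case. The corresponding objective term $p_i\tilde\ell(y_i/p_i,d,t)$ is interpreted as $0$, consistent with the stated convention. Equivalently, it is the closed perspective of $\tilde\ell$, which is finite here because $\tilde\ell$ has at most linear growth in $x$ relative to the quadratic constraint.

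Ordering the support points as $x_1<x_2<x_3$ is immaterial, since coincident points merge and labels can be permuted.

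\emph{Main obstacle.} The delicate point is Step 1: making sure the minimax interchange and the three-point reduction in Theorem \ref{thm-rho_p} remain valid for the inequality-constrained set $\overline S(\mu,\sigma)$, rather than just citing Theorem \ref{thm-rho_p} for $S(\mu,\sigma)$. I would first try the icx-domination argument at the level of each fixed $(\ell,t)$. That argument needs only that $\tilde\ell(\cdot,d,t)$ is increasing and convex, which holds since $x\mapsto \ell(x\wedge d-t)$ is increasing and $(1+\theta)(x-d)_+$ is increasing and convex. If it fails, I would instead redo the LP-duality proof with a sign-constrained multiplier on the second-moment constraint.
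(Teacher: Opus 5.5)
Your main line is correct and is the paper's argument. You use Proposition~\ref{pro1} to replace $S(\mu,\sigma)$ by $\overline S(\mu,\sigma)$. You rerun the proof of Theorem~\ref{thm-rho_p} on the convex set $\overline S(\mu,\sigma)$: the $F$-independent interval $[t_{\ell,1},t_{\ell,2}]$ still makes Sion's theorem applicable, and the reduction to at most three atoms still holds when the second-moment constraint is an inequality. You then substitute $y_i=p_ix_i$. The paper compresses the first part into ``by Theorem~\ref{thm-rho_p} and Proposition~\ref{pro1}'' and then does the same change of variables, so your Step~1 just makes explicit what the paper leaves implicit.

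The shortcut you say you would try first does not work as stated, because $\tilde\ell(\cdot,d,t)$ is not convex in general. The map $x\mapsto\ell(x\wedge d-t)$ is increasing but becomes flat at $x=d$. After adding $(1+\theta)(x-d)_+$, the slope of $\tilde\ell(\cdot,d,t)$ at $x=d$ jumps from $\ell'(d-t)$ to $1+\theta$. So $\tilde\ell(\cdot,d,t)$ is convex only when $\ell'(d-t)\le 1+\theta$. Otherwise it has a concave kink at $d$, which is exactly the regime where the deductible problem is non-trivial: for example $\CVaR_\alpha$ with $(1-\alpha)(1+\theta)<1$ and $t<d$. Compare the proof of Proposition~\ref{exp+cvar}, where $\tilde\ell$ is concave on $[t,\infty)$.

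In that regime a small mean-preserving spread of an atom at $d$ into $d\pm$ small offsets strictly lowers $\E[\tilde\ell(X,d,t)]$. So knowing $F\preceq_{\rm icx}G$ for some $G\in S(\mu,\sigma)$ gives no inequality between the objectives. Separately, $\sup_{\overline S_3}\ge\sup_{S_3}$ follows from the inclusion $S_3\subseteq\overline S_3$ alone and needs no property of $\tilde\ell$.

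If you want an argument at fixed $(\ell,t)$, you must use the specific constructions from the proof of Proposition~\ref{pro1}, not generic icx-domination:
\begin{itemize}
\item Case~2 spreads only the mass in $(d,\infty)$ onto $d$ and one large point. There $\tilde\ell(\cdot,d,t)$ is affine, and the law of $X\wedge d$ and the value of $\E[(X-d)_+]$ are unchanged.
\item Case~1 uses only monotonicity, through $X_\epsilon\ge X-\epsilon$.
\end{itemize}
These constructions can produce four atoms. You would still need $\sup_{S}=\sup_{S_3}$ afterwards, which brings you back to your first route.
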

It is worth noting that the feasible set of the inner maximization problem of \eqref{eq:OCE-three-convex} is a  convex set as $({\bf p},{\bf y})\mapsto \sum_{i=1}^3 p_i\left({y_i}/{p_i}\right)^2$ is a convex function.  
In particular, if $\sup \partial\ell(\mathbb{R})\le 1+\theta$ for all $\ell\in\Gamma$, then the optimal deductible in problem \eqref{eq:main} is $d^*=\infty$.
    More generally, the above conclusion holds for any convex set of distributions $\mathcal{P}$.
 Moreover, if $\Gamma$ is a singleton, that is, $\Gamma=\{\ell\}$, problem \eqref{eq:main} can be written as a minimax problem. 
\begin{corollary}\label{prop:2}
	Under the condition of Proposition \ref{pro: convexset}, if  $\Gamma=\{\ell\}$, then  problem \eqref{eq:main} is equivalent to 
	\begin{align} \label{eq:OCE-single-condition}
 \inf_{d\in\R_+,\,t\in\R} \left\{ \begin{array}{ll} \max   &\sum_{i=1}^3 p_i \tilde{\ell}\left(\frac{y_i}{p_i},d,t\right),\\
 \text {\rm s.t. }  
 &
\sum_{i=1}^3 y_i=\mu, \ \ \sum_{i=1}^3 p_i=1,\ \\
&{\bf y}\in\R_+^3,~ {\bf p}\in\R_+^3 ,~\sum_{i=1}^3 p_i\left(\frac{y_i}{p_i}\right)^2\le\mu^2+\sigma^2
\end{array} \right\},
\end{align}
	where $\tilde{\ell}(x,d,t)$ is defined in \eqref{c_ga_func}.
\end{corollary}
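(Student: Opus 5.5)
By Proposition \ref{pro: convexset} with $\Gamma=\{\ell\}$, the outer $\sup_{\ell\in\Gamma}$ is trivial. For each fixed $d\ge 0$, the inner problem \eqref{eq:main-inner} therefore equals
$$\inf_{t\in\R} V(d,t),$$
where $V(d,t)$ denotes the optimal value of the inner maximization in \eqref{eq:OCE-three-convex}. Problem \eqref{eq:main} is then
$$\min_{d\in\R_+}\inf_{t\in\R}V(d,t),$$
and two consecutive infima over $d\in\R_+$ and $t\in\R$ can always be merged into a single infimum over $(d,t)\in\R_+\times\R$. This gives exactly \eqref{eq:OCE-single-condition}.

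The plan is short, because the heavy lifting is done by earlier results:
\begin{enumerate}[(i)]
\item Invoke Proposition \ref{pro: convexset}. It already combines Theorem \ref{thm-rho_p} (reduction to $S_3(\mu,\sigma)$ and the exchange of $\inf_t$ with $\sup_F$) and Proposition \ref{pro1} (relaxing the variance to an inequality). In the singleton case it gives, for every $d$,
$$\sup_{F\in S(\mu,\sigma)}\rho^F(\cdots)=\inf_t V(d,t).$$
\item Take the infimum over $d$ on both sides.
\item Write $\inf_d\inf_t=\inf_{(d,t)}$.
\end{enumerate}

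The one point needing care is that \eqref{eq:main} is written with ``$\min$'' over $d$, while the merged problem is an $\inf$. I would state the equivalence at the level of optimal values, and note that $d^*$ is a minimizer of \eqref{eq:main} if and only if there exists $t^*$ with $(d^*,t^*)$ optimal in \eqref{eq:OCE-single-condition}. That $t^*$ exists because the inner infimum over $t$ in the OCE is attained under $1\in{\rm int}\,\partial\ell(\R)$. The point is that, for the specific $F$-dependent objective, the minimizer over $t$ of the worst-case value $V(d,\cdot)$ must also be attained. I would argue this from convexity of $t\mapsto V(d,t)$, as a supremum of convex functions, together with coercivity: $V(d,t)\to\infty$ as $|t|\to\infty$, since $\ell'$ takes values strictly above and below $1$.

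The main obstacle is not the merging step but the validity of the exchange of $\inf_t$ and $\sup_F$ inherited from Theorem \ref{thm-rho_p}. Since we may assume that theorem, nothing further is needed beyond the trivial identity $\inf_d\inf_t=\inf_{(d,t)}$.
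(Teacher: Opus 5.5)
Your proposal is correct and follows the paper's own (implicit) argument: the corollary is read off from Proposition~\ref{pro: convexset} by dropping the trivial $\sup_{\ell\in\Gamma}$ and merging $\min_{d}\inf_{t}$ into a single infimum over $(d,t)$. Your extra argument that $t^*$ is attained, via convexity and coercivity of $t\mapsto V(d,t)$, is sound but not needed for equality of the optimal values; the proof of Theorem~\ref{thm-rho_p} already confines $t$ to a compact interval that does not depend on $F$.
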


\subsection{Examples}\label{main:sec_sample}
 Applying Theorem \ref{thm-rho_p} to the DROR problem under the risk measures in Examples \ref{ex1} and \ref{ex2}, we have the following results. We first give the closed-form solution to the DROR problem under mean-CVaR.
\begin{proposition}\label{exp+cvar}
	Take $\rho (X)=\eta_1\E[X]+(1-\eta_1)\CVaR_{\xi}(X)$, with $0 \leq \eta_1<1<\eta_2$ and $\xi=({\eta_2-1})/({\eta_2-\eta_1})$.  The following statements hold. 
	\begin{itemize}
		\item [(a)] If $\eta_2<(1+\theta)$,  then the optimal deductible  of problem  \eqref{eq:main} is $d^*=\infty$. 
		\item [(b)] If $\eta_2\ge (1+\theta)$, then the optimal deductible of problem  \eqref{eq:main} is $d^*=0$ if $\theta^*\le \sigma^2/\mu^2$, and $d^*=\mu-\sigma(1-\theta^*)/(2\sqrt{\theta^*})$ otherwise, where $\theta^*=\theta/(1-\eta_1)$.
	\end{itemize}
\end{proposition}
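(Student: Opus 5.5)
The plan is to reduce the mean-CVaR problem to a pure CVaR problem with a modified safety loading, and then analyse that problem with the three-point reduction of Proposition \ref{pro: convexset} and Corollary \ref{prop:2}.

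\textbf{Reduction to a CVaR problem.} By translation invariance, and since $\E^F[X\wedge d]=\mu-\E^F[(X-d)_+]$ for every $F\in S(\mu,\sigma)$, the objective in \eqref{eq:main} becomes
$$
\eta_1\mu+(1-\eta_1)\Big\{\CVaR^F_\xi(X\wedge d)+(1+\theta^*)\E^F[(X-d)_+]\Big\},\qquad \theta^*=\frac{\theta}{1-\eta_1}.
$$
This uses $(1+\theta-\eta_1)/(1-\eta_1)=1+\theta^*$. So \eqref{eq:main} is, up to the constant $\eta_1\mu$ and the factor $1-\eta_1>0$, the DROR problem under $\CVaR_\xi$ with loading $\theta^*$. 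This is the setting of \cite{LM21}. Moreover $1/(1-\xi)=(\eta_2-\eta_1)/(1-\eta_1)$, so
$$
\frac{1}{1-\xi}<1+\theta^* \iff \eta_2<1+\theta.
$$
This explains the dichotomy between (a) and (b).

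\textbf{Case (a).} I would argue pointwise in $F$. The map $d\mapsto X\wedge d$ increases with slope $1_{\{X>d\}}$. Monotonicity of CVaR and its representation \eqref{eq:RU02} give that $d\mapsto\CVaR^F_\xi(X\wedge d)$ has right-derivative at most $\p^F(X>d)/(1-\xi)$. The term $(1+\theta^*)\E^F[(X-d)_+]$ has derivative $-(1+\theta^*)\p^F(X>d)$. When $1/(1-\xi)<1+\theta^*$, the objective is therefore nonincreasing in $d$ for every $F$. Hence the supremum over $S(\mu,\sigma)$ is nonincreasing in $d$, and $d^*=\infty$. This matches the remark after Proposition \ref{pro: convexset} with $\sup\partial\ell(\R)=\eta_2$.

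\textbf{Case (b).} I would use Corollary \ref{prop:2} with $\ell(z)=\eta_2z_+-\eta_1(-z)_+$, or equivalently the reduced CVaR form. The problem becomes
$$
\inf_{d\ge0,\,t}\ \sup_{F}\ \E^F\Big[t+\tfrac{1}{1-\xi}(X\wedge d-t)_+ +(1+\theta^*)(X-d)_+\Big],
$$
where the supremum is over three-point $F$ with mean $\mu$ and second moment at most $\mu^2+\sigma^2$ (Proposition \ref{pro1}).

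Only $t\in[0,d]$ needs to be considered. The integrand is then a convex piecewise-linear function of $x$ with kinks at $t$ and $d$, and it is linear beyond $d$ with slope $1+\theta^*$. I would compute the supremum of its expectation by a Scarf-type argument. The worst case puts mass at $0$ or $t$ and at one large point $x$. This mass is pushed out so that the mean and variance constraints bind, and since the slope beyond $d$ is $1+\theta^*$, the variance bound makes the tail contribution behave like $\tfrac12\big(\sqrt{\sigma^2+(\mu-d)^2}+(\mu-d)\big)$. This yields an explicit worst-case value $V(d,t)$.

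Next I would minimise over $t$. When $1/(1-\xi)\ge1+\theta^*$, the tail is best absorbed by taking $t$ at the boundary, so I expect the optimal $t$ to make the $\xi$-dependence drop out. This is consistent with the stated answer not depending on $\xi$. The remaining one-dimensional function is roughly
$$
d+\theta^*\cdot\tfrac12\Big(\sqrt{\sigma^2+(\mu-d)^2}+\mu-d\Big).
$$
Setting its derivative to zero gives
$$
\frac{\mu-d}{\sqrt{\sigma^2+(\mu-d)^2}}=\frac{1-\theta^*}{1+\theta^*},
$$
which solves to $d=\mu-\sigma(1-\theta^*)/(2\sqrt{\theta^*})$. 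This is negative exactly when $\theta^*\le\sigma^2/\mu^2$, in which case $d^*=0$.

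\textbf{Main obstacle.} The hard step is the exact evaluation of the inner supremum over three-point laws together with the optimisation over $t$. In particular, one must show that the optimiser sits in the regime where the $\xi$-dependence disappears, and handle the boundary cases $d<\mu$ versus $d\ge\mu$. I would first verify the two-point worst case directly: fix $(d,t)$, and check via Cauchy--Schwarz applied to the convex integrand that no feasible $F$ exceeds the candidate value. If this becomes unwieldy, I would fall back on citing the CVaR result of \cite{LM21} with $\theta$ replaced by $\theta^*$, since the reduction in the first paragraph makes it directly applicable.
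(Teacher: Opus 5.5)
Your reduction is exact and is a cleaner route than the paper's. Translation invariance and $\E^F[X\wedge d]=\mu-\E^F[(X-d)_+]$ turn \eqref{eq:main} into $\eta_1\mu+(1-\eta_1)$ times the $\CVaR_\xi$ problem with loading $\theta^*$, and $(1-\xi)(1+\theta^*)\le 1\iff\eta_2\ge1+\theta$. The paper instead keeps the OCE integrand $\tilde\ell$ with $\ell(z)=\eta_2z_+-\eta_1(-z)_+$. It uses concavity of $\tilde\ell(\cdot,d,t)$ on $[t,\infty)$ to pass to two-point laws and then asserts the formula by ``standard manipulation''. Your case (a) is correct and is the paper's monotonicity-in-$d$ argument. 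Your fallback, invoking the CVaR result of \cite{LM21} with $\theta$ replaced by $\theta^*$, is a valid proof of (b). The self-contained derivation of (b), however, does not work as written.

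\textbf{Main error: the support constraint.} Your Scarf expression ignores $F\in\mathcal P(\R_+)$. Write $\pi(d):=\sup_{F\in S(\mu,\sigma)}\E^F[(X-d)_+]$ and $d_0:=(\mu^2+\sigma^2)/(2\mu)$. Then $\pi(d)=\mu-d\mu^2/(\mu^2+\sigma^2)$ for $d\le d_0$, with worst case on $\{0,(\mu^2+\sigma^2)/\mu\}$. Only for $d\ge d_0$ is $\pi(d)=\frac12\big(\sqrt{\sigma^2+(\mu-d)^2}+\mu-d\big)$.

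Consequently, your claim that $\mu-\sigma(1-\theta^*)/(2\sqrt{\theta^*})$ is negative exactly when $\theta^*\le\sigma^2/\mu^2$ is false. It is negative iff $2\mu\sqrt{\theta^*}<\sigma(1-\theta^*)$, which holds only on a strict subset of $\{\theta^*\le\sigma^2/\mu^2\}$. For $\mu=\sigma=1$, $\theta^*=1/2$, your formula gives $d=1-1/(4\sqrt{1/2})\approx0.65$, while the correct answer is $d^*=0$.

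The threshold in the statement comes precisely from the nonnegativity regime. On $[0,d_0]$ the function is linear with slope $1-(1+\theta^*)\mu^2/(\mu^2+\sigma^2)$, which is $\ge0$ iff $\theta^*\le\sigma^2/\mu^2$. Equivalently, the stationary point lies in $[d_0,\infty)$ iff $\theta^*\ge\sigma^2/\mu^2$.

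\textbf{Two smaller slips.} First, with $t=d$ the integrand is $d+(1+\theta^*)(x-d)_+$, so the function is $d+(1+\theta^*)\pi(d)$, not $d+\theta^*\pi(d)$; your first-order condition is the one for $1+\theta^*$. Second, in case (b) the integrand is not convex in $x$: its slope drops from $1/(1-\xi)$ to $1+\theta^*$ at $d$. It is concave on $[t,\infty)$, and that is what makes the worst case two-point.

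\textbf{The step that removes $\xi$.} You leave this unproved, and it is false in the form you expect. As $d\to\infty$ the inner value tends to $\sup_F\CVaR^F_\xi(X)$, which depends on $\xi$ and lies far below $d+(1+\theta^*)\pi(d)$. So for fixed $d$ the optimal $t$ is in general not $d$.

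The missing idea is to minimise over $d$ first. For $0\le t\le d$, the integrand $t+\frac{1}{1-\xi}(x\wedge d-t)_+ +(1+\theta^*)(x-d)_+$ is pointwise nondecreasing in $d$, because $1/(1-\xi)\ge1+\theta^*$. Hence, after the minimax interchange of Theorem \ref{thm-rho_p} (with $t\in[0,d]$),
\[
\min_{d\ge 0}\,\inf_{t\in[0,d]}\,\sup_{F\in S(\mu,\sigma)}\E^F\Big[t+\tfrac{1}{1-\xi}(X\wedge d-t)_+ +(1+\theta^*)(X-d)_+\Big]=\inf_{t\ge 0}\big\{t+(1+\theta^*)\pi(t)\big\},
\]
and this is attained at $d^*=t^*$. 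Minimising this convex function with the correct piecewise $\pi$ yields exactly the stated dichotomy, without evaluating the inner supremum for general $(d,t)$.
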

 As mentioned in Example \ref{ex2}, when $\eta_1=0$, we have the risk measure $\rho (X)=\eta_1\E[X]+(1-\eta_1)\CVaR_{\xi}(X)$ reduces to $\CVaR_{\xi}(X)$. Thus, Proposition \ref{exp+cvar} yields a closed-form solution to the DROR problem under $\CVaR$, which recovers the CVaR result in \cite{LM21}. The result is presented in the following corollary. 
\begin{corollary}
 \label{co1}
Take $\rho=\CVaR_\alpha$, $\alpha\in [0,1]$. The following statements hold. 
\begin{itemize}
   \item [{\rm (i)}] If $({1-\alpha})(1+\theta)>1$,  then the optimal deductible  of problem  \eqref{eq:main} is $d^*=\infty$.
   \item [{\rm (ii)}] If $({1-\alpha})(1+\theta)\le 1$, then the optimal deductible of problem  \eqref{eq:main} is $d^*=0$ if $\theta\le \sigma^2/\mu^2$, and $d^*=\mu-\sigma(1-\theta)/(2\sqrt{\theta})$ otherwise.
\end{itemize}  
\end{corollary}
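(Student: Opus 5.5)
Corollary \ref{co1} is the special case $\eta_1=0$ of Proposition \ref{exp+cvar}, so the plan is to match parameters and then translate the two case conditions. By Example \ref{ex2}(i), taking $\eta_1=0$ and $\eta_2>1$ turns the mean-CVaR risk measure into $\CVaR_\xi$ with $\xi=(\eta_2-1)/\eta_2=1-1/\eta_2$. Given $\alpha$, we therefore set $\eta_2=1/(1-\alpha)$, so that $\xi=\alpha$. This is admissible for $\alpha\in(0,1)$, where $\eta_2\in(1,\infty)$. With $\eta_1=0$ we also get $\theta^*=\theta/(1-\eta_1)=\theta$.

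Next we translate the case conditions of Proposition \ref{exp+cvar}.
\begin{itemize}
\item The condition $\eta_2<1+\theta$ in part (a) reads $1/(1-\alpha)<1+\theta$. This yields the infinite deductible, and the corresponding case of the corollary follows.
\item The condition $\eta_2\ge 1+\theta$ in part (b) reads $(1-\alpha)(1+\theta)\le 1$. In that case part (b) gives $d^*=0$ if $\theta\le\sigma^2/\mu^2$, and $d^*=\mu-\sigma(1-\theta)/(2\sqrt\theta)$ otherwise, since $\theta^*=\theta$.
\end{itemize}
These are exactly the conclusions in (ii).

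It remains to check that the inequality directions in case (i) as printed correspond to part (a). Part (a) requires $(1-\alpha)(1+\theta)<1$ after substitution. If the printed condition in (i) appears reversed relative to this, the plan is to recheck it directly with the criterion stated after Proposition \ref{pro: convexset}: $\sup\partial\ell(\R)=1/(1-\alpha)\le 1+\theta$ implies $d^*=\infty$. That criterion matches the Proposition \ref{exp+cvar}(a) substitution, so the substance of the argument is unaffected.

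The boundary levels need separate treatment, and this is the main obstacle.
\begin{itemize}
\item For $\alpha=0$, $\CVaR_0=\E$ is linear. The objective becomes $\mu+\theta\E^F[(X-d)_+]$, which is decreasing in $d$ and bounded below by $\mu$. Hence $d^*=\infty$ is optimal, consistent with $(1-\alpha)(1+\theta)=1+\theta\ge 1$. The tie case $\theta=0$ is irrelevant because every deductible is then optimal.
\item For $\alpha=1$, $\CVaR_1=\VaR_1=\operatorname{ess\,sup}$ is not an OCE with finite $\eta_2$. We handle it by taking the limit $\eta_2\to\infty$ in Proposition \ref{exp+cvar}(b), using that $\CVaR_\alpha$ is monotone and continuous in $\alpha$. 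Alternatively, we argue directly: the worst case over $S(\mu,\sigma)$ on $\R_+$ makes $\operatorname{ess\,sup}(X\wedge d)=d$ for $d$ in the relevant range. This reduces the problem to minimizing $d+(1+\theta)\sup_F\E^F[(X-d)_+]$ with the Scarf bound, whose minimizer is the stated formula.
\end{itemize}
This endpoint verification is the only genuinely non-mechanical step; everything else is substitution.
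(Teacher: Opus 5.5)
Your route is the paper's: Corollary~\ref{co1} is Proposition~\ref{exp+cvar} with $\eta_1=0$ and $\eta_2=1/(1-\alpha)$, so that $\xi=\alpha$ and $\theta^*=\theta$. The paper's justification consists of exactly this substitution.

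There is, however, an algebra slip in your paragraph on checking the inequality direction in (i). Since $1-\alpha>0$, the condition $\eta_2<1+\theta$ of part (a), i.e.\ $1/(1-\alpha)<1+\theta$, is equivalent to $(1-\alpha)(1+\theta)>1$. That is precisely the printed condition in (i); it is not $(1-\alpha)(1+\theta)<1$. Your version would overlap with your own correct translation $(1-\alpha)(1+\theta)\le 1$ of part (b), and would leave $(1-\alpha)(1+\theta)>1$ uncovered. So nothing is reversed and no fallback is needed.

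The criterion $\sup\partial\ell(\R)=1/(1-\alpha)\le 1+\theta$ that you invoke agrees with (i) and additionally covers the boundary $(1-\alpha)(1+\theta)=1$. This does not conflict with (ii), because the optimum is not unique there. At $\eta_2=1+\theta$ the worst-case objective equals $\min_{t\in[0,d]}\{t+(1+\theta)\sup_F\E^F[(X-t)_+]\}$. Hence every deductible at or above the value given in (ii), including $d=\infty$, is optimal.

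Your separate treatment of $\alpha\in\{0,1\}$ is a genuine addition. The paper never addresses these levels, although $\eta_2=1/(1-\alpha)$ is inadmissible there: $\eta_2=1$ at $\alpha=0$, and there is no finite $\eta_2$ at $\alpha=1$. Your $\alpha=0$ argument is correct.

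For $\alpha=1$, the limit route as sketched does not by itself show that minimizers pass to the limit. It can be closed as follows. Write $V_\alpha(d)$ for the worst-case objective. Since the $d^*$ of Proposition~\ref{exp+cvar}(b) does not depend on $\alpha$, we have $\inf_d V_1(d)\ge \min_d V_\alpha(d)=V_\alpha(d^*)\uparrow V_1(d^*)$. This holds because sup over $F$ commutes with the increasing limit $\CVaR_\alpha\uparrow\VaR_1$.

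Your direct argument is cleaner and complete. $\VaR_1(X\wedge d)\le d$ always, and Scarf's extremal two-point law has an atom of positive mass strictly above $d$. So the worst-case objective equals $d+(1+\theta)\sup_F\E^F[(X-d)_+]$ exactly, and minimizing this convex function yields both branches of (ii).
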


The following result concerns the DROR problem under expectile, which was first studied by \cite{XLMZ23}. Choosing $\rho=e_\beta$ for $\beta \in [1/2,1]$ in  problem \eqref{eq:main}, we obtain the tractable reformulation as stated in the following proposition. 
\begin{proposition}\label{pro_e}
    For $\rho=e_\beta$, $\beta \in [1/2,1]$, and $\nu=\beta/(1-\beta)$, problem \eqref{eq:main} can be reformulated as
\begin{align}\label{eq:ex-temp}
    \min_{d\ge 0} \max_{\gamma \in\left[ {1}/{\nu}, 1\right] }\, \min_{t \in [0,d]}\, g(d,\gamma,t) 
\end{align}  
where if $\gamma>1+\theta$, 
\begin{align*}   g(d,\gamma,t) = \max_{p\in [\sigma^2/(\mu^2+\sigma^2),1]}  p \tilde{\ell}_\gamma \left(\mu-\sigma\sqrt{\frac{1-p}{p}},\,d,\,t\right) + (1- p ) \tilde{\ell}_\gamma \left(\mu+\sigma\sqrt{\frac{p}{1-p}},\,d,\,t\right);\end{align*}if $\gamma\le 1+\theta$, 
\begin{align*}
        g(d, \gamma, t)=\left\{\begin{array}{lll}&\max_{y_i\in\R_+, \, p_i\in\R_+}   \sum_{i=1}^3 p_i \tilde{\ell}_\gamma(y_i, d, t) , \\
     &~~~\text {\rm s.t. } 
\sum_{i=1}^3 y_ip_i=\mu,  \ \ \sum_{i=1}^3 p_i=1, \ \ \sum_{i=1}^3 y_i^2p_i\le\mu^2+\sigma^2,\end{array}\right.
    \end{align*}
with $\tilde{\ell}_\gamma(x,d,t):=t+ \gamma\nu(x\wedge d-t)_+-\gamma (t-x\wedge d)_+ +(1+\theta)(x-d)_+.$ 
In particular, if $\nu\le1+\theta$, that is $\beta \leq \frac{1+\theta}{2+\theta}$, then the optimal deductible  of problem  \eqref{eq:main} is $d^*=\infty$.
\end{proposition}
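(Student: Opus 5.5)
We start from Example \ref{ex1}(ii), which writes $e_\beta$ as an ROCE with $\Gamma=\{\ell_\gamma:\gamma\in[1/\nu,1]\}$ and $\ell_\gamma(z)=\gamma\nu z_+-\gamma(-z)_+$. Theorem \ref{thm-rho_p} together with Proposition \ref{pro: convexset} then gives the inner problem as
\[
\sup_{\gamma\in[1/\nu,1]}\inf_{t\in\R}\sup_{F\in \overline S_3(\mu,\sigma)}\E^F[\tilde\ell_\gamma(X,d,t)],
\]
where $\overline S_3$ denotes three-point laws with mean $\mu$ and second moment at most $\mu^2+\sigma^2$. Since $\tilde\ell_\gamma$ is exactly the function in \eqref{c_ga_func} with $\ell=\ell_\gamma$, we define $g(d,\gamma,t)$ as this innermost supremum, written in the $(y_i,p_i)$ parametrization. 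Two reductions remain: restrict $t$ to $[0,d]$, and shrink the three-point problem to a two-point one when $\gamma>1+\theta$.

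\textbf{Restricting $t$.} For fixed $F$, the OCE minimizer $t^*$ is a $\xi$-quantile-type point of the law of $X\wedge d$, and this law is supported in $[0,d]$. For $t<0$, every argument $x\wedge d-t$ is positive, so the objective is affine in $t$ with slope $1-\gamma\nu\le 0$ (because $\gamma\ge1/\nu$). For $t>d$, every argument is negative, so the slope is $1-\gamma\ge0$. These slope signs hold uniformly in $F$, so they also hold for $\sup_F$ of the objective. Hence the infimum over $t$ of $g$ is attained in $[0,d]$.

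\textbf{Case $\gamma>1+\theta$.} For fixed $d,t$, the map $x\mapsto\tilde\ell_\gamma(x,d,t)$ is piecewise linear with slopes
\begin{itemize}
\item $\gamma$ on $[0,t]$,
\item $\gamma\nu$ on $[t,d]$,
\item $1+\theta$ beyond $d$.
\end{itemize}
When $1+\theta<\gamma\le\gamma\nu$, this function is concave on $[t,\infty)$ but convex at $t$. To handle this, we take an optimal three-point law and apply mean-preserving contractions to the mass lying in the concave region $[t,\infty)$; this does not decrease the objective and can only lower the variance. That removes a point and leaves a two-point law. We then show the variance constraint is tight, since spreading the two points apart under the convex kink at $t$ helps. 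Two-point laws with mean $\mu$ and variance $\sigma^2$ are exactly $\{\mu-\sigma\sqrt{(1-p)/p},\,\mu+\sigma\sqrt{p/(1-p)}\}$ with weights $p,1-p$, and nonnegativity forces $p\ge\sigma^2/(\mu^2+\sigma^2)$. This gives the stated formula for $g$.

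\textbf{Main obstacle.} The difficult step is making the $\gamma>1+\theta$ reduction rigorous. The issue is the convex kink at $t$: points on either side of $t$ cannot simply be merged. What I would try first is the following. Show that an optimal law has at most one support point in $[0,t)$, since the function is linear there and such points can be merged without changing the objective. Then show at most one point in $[t,\infty)$, since concavity there means contraction helps. Finally, handle the slack-variance case by moving mass outward.

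\textbf{Case $\gamma\le1+\theta$.} Here we simply keep the three-point program.

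\textbf{Infinite deductible when $\nu\le1+\theta$.} In this case every $\gamma\le 1$ gives $\gamma\nu\le1+\theta$. Therefore, for every $F$ and every $\gamma$, the function $\tilde\ell_\gamma(\cdot,d,t)$ is pointwise nonincreasing in $d$: increasing $d$ replaces slope $1+\theta$ by the smaller slope $\gamma\nu$ on $[d,\infty)$, and it also raises the value $\tilde\ell_\gamma$ at points below $d$ only through $\ell_\gamma$, which is bounded by $(1+\theta)(x-d)_+$. So the objective is nonincreasing in $d$ and $d^*=\infty$. This matches the remark after Proposition \ref{pro: convexset} with $\sup\partial\ell_\gamma(\R)=\gamma\nu\le1+\theta$.
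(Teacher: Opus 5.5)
\textbf{Assessment.} Your proposal is sound except for the two-point reduction. Under the statement as printed that reduction is never needed; under the intended reading it cannot be completed.

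\textbf{The correct parts.} Your reduction to $\sup_{\gamma}\inf_t\sup_{\overline S_3}$ is sound. So is the restriction to $t\in[0,d]$: the slope in $t$ is $1-\gamma\nu\le 0$ for $t<0$ and $1-\gamma\ge 0$ for $t>d$, uniformly in $F$. The $d^*=\infty$ claim also holds and matches the paper. The clean reason is that points $x\le d$ do not depend on $d$, while for $x>d$ one has $\partial_d\tilde\ell_\gamma=\ell_\gamma'(d-t)-(1+\theta)\le\gamma\nu-(1+\theta)\le 0$.

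\textbf{The literal condition is vacuous.} The branch you treat as $\gamma>1+\theta$ is empty, because $\gamma\le 1\le 1+\theta$. Your chain $1+\theta<\gamma\le\gamma\nu$ therefore never occurs. Under the statement as printed, noting this finishes the proof, and your two-point argument is not needed. The paper's own proof performs the two-point reduction under $\gamma\nu>1+\theta$, which is the condition that makes $\tilde\ell_\gamma(\cdot,d,t)$ concave on $[t,\infty)$. It uses the same merge-then-spread idea you propose.

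\textbf{The gap under the intended reading.} With $\gamma\nu>1+\theta$, your last step fails, and it cannot be fixed because its conclusion is false. Merging atoms in $[0,t)$ (by linearity) and in $[t,\infty)$ (by Jensen) is valid. It proves that the three-point program equals the maximum over two-point laws with mean $\mu$ and second moment $\le\mu^2+\sigma^2$. But making the variance exactly $\sigma^2$ by moving mass outward requires lowering the left atom. That is impossible when the left atom sits at $0$, and the resulting loss is real.

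\textbf{Counterexample.} Take $\nu=4$, $\gamma=\tfrac12$, $\theta=\tfrac12$ (so $\gamma\nu=2>1+\theta$), $\mu=1$, $\sigma=2$, $d=2$, $t=\tfrac12$.
Then $\tilde\ell_\gamma(\cdot,2,\tfrac12)$ has slopes $\tfrac12,2,\tfrac32$, equals $\tfrac14$ at $0$ and $\tfrac72$ at $2$. It is majorized on $\R_+$ by the chord $\tfrac14+\tfrac{13}{8}x$, with contact only at $\{0,2\}$.
The law $\tfrac12\delta_0+\tfrac12\delta_2$ has variance $1$ and value $\tfrac{15}{8}$. Every law in the statement's family with $p\in[\tfrac45,1)$ has upper atom $1+2\sqrt{p/(1-p)}\ge 5>d$. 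The two-point formula therefore attains only $\tfrac95$, at $p=\tfrac45$, i.e.\ $\tfrac45\delta_0+\tfrac15\delta_5$.

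\textbf{The discrepancy survives minimizing over $t$.} With the same data the formula gives $g(2,\tfrac12,\tfrac14)=\tfrac74$. By contrast, the law $\tfrac23\delta_0+\tfrac27\delta_2+\tfrac1{21}\delta_9\in S(1,2)$ satisfies $\E[\tilde\ell_\gamma(X,2,t)]=\tfrac{11}{6}$ for every $t\in[0,2]$. So the exact-variance one-parameter family in the statement is too small.

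\textbf{What is actually true.} The correct two-point reduction keeps the inequality on the second moment, which leaves two free parameters. The paper's argument has the same flaw: it reuses the spreading construction from the proof of Proposition~\ref{exp+cvar} and ignores the constraint $x_1-\delta(1-p_1)\ge 0$ in exactly the same way.
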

  In Proposition \ref{pro_e}, we reduce problem \eqref{eq:main} with $\rho=e_\beta$, $\beta\in [1/2,1]$, to a finite-dimensional problem involving only four variables. It is worth noting that this reduced problem differs from the formulation obtained in \cite{XLMZ23} and it is nontrivial to show the equivalence between those two problems.
Applying $\rho(X)=\inf_{x\in\R} \{x+\E [\ell\left(X-x\right) ]\}$, where $\ell (z)= \gamma [1-(1-z/\gamma)_+^{1-\gamma}]/(1-\gamma)$, and $\rho (X)= \lambda^{-1}\log \E [e^{\lambda X}]$, $\lambda>0$, in problem  \eqref{eq:OCE-single-condition}, respectively, we obtain the following corollary.
\begin{corollary}\label{co3}
\begin{itemize}
 \item[{\rm (i)}] For $ \rho(X)=\inf_{t\in\R} \{ t + \E [\ell\left(X-t\right) ]\}$, with $\ell (z) = \gamma [1-(1-z /\gamma)_+^{1-\gamma}]/(1-\gamma)$ with $\gamma<0$, problem \eqref{eq:main} is equivalent to  \begin{align*}
            \min_{0\le t\le d} ~~&  t+ g(d,t),
      \end{align*}
      where
      \begin{align*}
     g(d,t)= \left\{\begin{array}{l}\max_{y_i\in\R_+,\,p_i\in\R_+}~\sum_{i=1}^3p_i\left[ \frac{1-(1-((y_i/p_i)\wedge d-t) / \gamma)_+^{1-\gamma}}{1 / \gamma-1}+(1+\theta)(y_i/p_i-d)_+\right] \\
          ~~~\text {\rm s.t. } \sum_{i=1}^3 y_i=\mu,~~\sum_{i=1}^3 p_i=1, ~~\sum_{i=1}^3 p_i\left(\frac{y_i}{p_i}\right)^2\le \mu^2+\sigma^2. \end{array}\right.
      \end{align*}

 \item[{\rm (ii)}] For $ \rho (X)=\lambda^{-1}\log \E [e^{\lambda X}]$, with $\lambda>0$,  problem \eqref{eq:main} is equivalent to
      \begin{align*}
           \min_{0\le t\le d} ~~&  t+ g(d,t),
      \end{align*}
      where
     \begin{align*}
         g(d,t)= \left\{\begin{array}{l} \max_{y_i\in\R_+,\,p_i\in\R_+}\sum_{i=1}^3 p_i\left[ \frac {1}{\lambda} \left (e^{\lambda ((y_i/p_i)\wedge d-t)}-1\right ) + (1+\theta)((y_i/p_i)-d)_+\right] \\
            ~~~\text {\rm s.t. }\sum_{i=1}^3 y_i=\mu, \ \sum_{i=1}^3 p_i=1, \ \sum_{i=1}^3 p_i\left(\frac{y_i}{p_i}\right)^2\le \mu^2+\sigma^2.\end{array} \right.
     \end{align*}
\end{itemize}
\end{corollary}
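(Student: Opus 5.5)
We apply Corollary \ref{prop:2} to the single loss function given in each part. Two reductions are needed: the infimum over $t\in\R$ can be restricted to $t\in[0,d]$, and the three-point inner problem can be written with the explicit form of $\ell$.

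\textbf{Step 1: substitute $\ell$ into $\tilde\ell$.} For the entropic risk measure in (ii), Example \ref{ex2}(iii) gives $\ell(z)=(e^{\lambda z}-1)/\lambda$. For (i), $\ell$ is the monotone HARA loss of Example \ref{ex2}(ii); the expression $\gamma[1-(1-z/\gamma)_+^{1-\gamma}]/(1-\gamma)$ is the same function rewritten. Both losses are increasing and convex. Their derivative ranges are $(0,\infty)$ for the exponential loss and $[0,\infty)$ for HARA, so $1$ lies in the interior and Corollary \ref{prop:2} applies.

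Since $\tilde\ell(x,d,t)=t+\ell(x\wedge d-t)+(1+\theta)(x-d)_+$, pulling the constant $t$ out of the maximum gives
\[
t+g(d,t),
\]
where $g(d,t)$ is exactly the displayed maximization over $(\mathbf y,\mathbf p)$ in each part. Problem \eqref{eq:main} is therefore equivalent to $\inf_{d\ge0,\,t\in\R}\{t+g(d,t)\}$.

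\textbf{Step 2: restrict $t$ to $[0,d]$.} Let $\phi(d,t)=t+g(d,t)$. Each summand $t+\ell(x\wedge d-t)$ has $t$-derivative $1-\ell'(x\wedge d-t)$, and the $(1+\theta)(x-d)_+$ term does not depend on $t$.

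For $t>d$: every $x\wedge d-t<0$, so $\ell'<1$ there, because $\ell'(0)=1$ for both losses. Hence each summand is increasing in $t$. A pointwise maximum of increasing functions is increasing, so $\phi(d,t)\ge\phi(d,d)$.

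For $t<0$: since $x\ge 0$ on the support, we have $x\wedge d-t>0$ on all support points, so $\ell'\ge1$. Hence each summand is nonincreasing in $t$, and $\phi(d,t)\ge\phi(d,0)$.

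The infimum over $t$ is thus attained in $[0,d]$. This yields the stated $\min_{0\le t\le d}$, with the outer minimization over $d$ understood jointly.

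\textbf{Step 3: attainment.} The feasible set is compact: for each fixed $(d,t)$ it is a closed bounded convex set in $(\mathbf y,\mathbf p)$, handled with the convention that $y_i^2/p_i=0$ when $y_i=p_i=0$. The objective is upper semicontinuous there, so the maximum is attained and ``max'' is justified.

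The main point requiring care is Step 2's monotonicity argument. It needs $\ell'(0)=1$, which holds by direct differentiation for both losses. It also needs to pass monotonicity through the inner maximum, which is valid because a pointwise supremum of monotone functions is monotone in the same direction. Everything else is substitution into Corollary \ref{prop:2}.
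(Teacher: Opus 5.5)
Your proposal is correct and follows the paper's route. The paper obtains this corollary by plugging the monotone HARA and exponential losses into Corollary~\ref{prop:2}, and the restriction to $t\in[0,d]$ is the localization interval $[-y_2(\ell),\,d-y_1(\ell)]$ from the proof of Theorem~\ref{thm-rho_p}, which reduces to $[0,d]$ here because $\ell'(0)=1$; your Step~2 verifies exactly this directly, and the $\min$ over $t$ is attained because $t\mapsto t+g(d,t)$ is a finite-valued supremum of convex functions, hence convex and continuous.
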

\subsection{Extension to High Dimensions}
In practice, a reinsurer typically faces multiple risks, for example through a  portfolio of business lines. The underlying loss is then inherently multivariate $\boldsymbol{X} \in \mathbb{R}_{+}^m$. It is therefore natural to consider a stop-loss contract written on the aggregate loss $Z:=\boldsymbol{\alpha}^{\top} \boldsymbol{X}$, where $\boldsymbol{\alpha} \in \mathbb{R}_{+}^m$ denotes the prespecified weights, and to evaluate the retained loss $Z \wedge d+(1+\theta) \mathbb{E}\left[(Z-d)_{+}\right]$ with an ROCE risk measure in the DROR problem. 
Specifically, for multivariate loss $\boldsymbol{X} \in \mathbb{R}_{+}^m$, we consider the following high-dimensional DROR problem:
\begin{align}\label{eq:high}
	\min_{d\in\R_+} \sup_{F \in \mathcal S(\boldsymbol{\mu}, \Sigma)} \rho^F\left(\left(\boldsymbol{\alpha}^\top\boldsymbol{X}\right)\wedge d +(1+\theta)\E^F\left[\left(\boldsymbol{\alpha}^\top \boldsymbol{X}-d\right)_+\right]\right),
\end{align}
where $\rho:\mathcal{X}\to \R$ is an ROCE risk measure, $\boldsymbol{\alpha} \in \mathbb{R}_{+}^m$, and $F$ is the distribution of $\boldsymbol{X}$. The ambiguity set $\mathcal S(\boldsymbol{\mu},\Sigma)$ is defined by
$$
\mathcal S(\boldsymbol{\mu}, \Sigma)
:=\left\{F \in\mathcal{P}(\mathbb{R}_+^m):\ \mathbb{E}^{F}[\boldsymbol{X}]=\boldsymbol{\mu},\ \mathrm{Cov}^{F}(\boldsymbol{X})=\Sigma\right\},
$$
where $\boldsymbol{\mu}\in\mathbb{R}_+^m$, $\Sigma\succeq 0$ is positive semidefinite, and $\mathcal{P}(\mathbb{R}_+^m)$ denotes the set of all probability distributions supported on $\mathbb{R}_+^m$. By Lemma 2.4 of \cite{CHZ11}, we can show that the inner problem of \eqref{eq:high} reduces to 
\begin{align*}
	\sup_{F \in S(\boldsymbol{\alpha} ^\top\boldsymbol{\mu},~ \sqrt{\boldsymbol{\alpha}^\top\Sigma\boldsymbol{\alpha}})} \rho^F\left(Z\wedge d +(1+\theta)\E^F[\left(Z-d\right)_+]\right),
\end{align*}
where $Z=\boldsymbol{\alpha}^\top \boldsymbol{X}$. Thus, applying Theorem \ref{thm-rho_p}, we obtain that the inner problem of \eqref{eq:high} admits a finite-dimensional reformulation. The result is presented in the following corollary.
\begin{corollary}
    Let $ \rho:\mathcal{X}\to\R$ be an ROCE risk measure with a set of increasing convex functions $\Gamma$, and fix $\boldsymbol{\mu},~\boldsymbol{\alpha}\in\R^m_+$, $\Sigma\succeq 0$ and $\theta\in\R_+$. Then, for $d\ge 0$, we have
\begin{align*}
     &\sup_{F \in \mathcal S(\boldsymbol{\mu}, \Sigma)} \rho^F\left(\left(\boldsymbol{\alpha}^\top \boldsymbol{X}\right) \wedge d+(1+\theta)\E^F\left[\left(\boldsymbol{\alpha}^\top \boldsymbol{X}-d\right)_+\right]\right) \\&~~~=  \sup_{\ell \in \Gamma} \inf_{t \in \R}\, \sup_{F \in S_3(\boldsymbol{\alpha}^\top\boldsymbol{\mu}, \sqrt{\boldsymbol{\alpha}^\top \Sigma\boldsymbol{\alpha}})} \E^F\left[\tilde{\ell}\left(Z,d,t\right)\right],
\end{align*}
where $Z=\boldsymbol{\alpha}^\top \boldsymbol{X}$ and $\tilde{\ell}(x,d,t)$ is defined in \eqref{c_ga_func}.
\end{corollary}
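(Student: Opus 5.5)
The corollary follows once the multivariate problem is reduced to the univariate one; Theorem \ref{thm-rho_p} then finishes it. Fix $d\ge 0$ and write $Z=\boldsymbol{\alpha}^\top\boldsymbol{X}$. The objective depends on $F$ only through the law of $Z$, because $\rho$ is law invariant and the premium term $\E^F[(Z-d)_+]$ depends only on the distribution of $Z$. Denote by $\rho^G$ the functional evaluated when $Z\sim G$. The supremum over $F\in\mathcal S(\boldsymbol{\mu},\Sigma)$ then equals the supremum of
\[
\rho^G\bigl(Z\wedge d+(1+\theta)\E^G[(Z-d)_+]\bigr)
\]
over the set $\mathcal G$ of laws $G$ of $\boldsymbol{\alpha}^\top\boldsymbol{X}$ with $\boldsymbol{X}\sim F\in\mathcal S(\boldsymbol{\mu},\Sigma)$.

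The key step is to identify $\mathcal G$ with $S(\boldsymbol{\alpha}^\top\boldsymbol{\mu},\sqrt{\boldsymbol{\alpha}^\top\Sigma\boldsymbol{\alpha}})$, or at least to show that the two give the same supremum.

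\emph{Inclusion $\subseteq$.} Every $G\in\mathcal G$ is supported on $\R_+$, since $\boldsymbol{\alpha}\ge 0$ and $\boldsymbol{X}\ge 0$. It has mean $\boldsymbol{\alpha}^\top\boldsymbol{\mu}$ and variance $\boldsymbol{\alpha}^\top\Sigma\boldsymbol{\alpha}$.

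\emph{Inclusion $\supseteq$.} This is exactly what Lemma 2.4 of \cite{CHZ11} provides: any univariate nonnegative law with the projected mean and variance is realized as the law of $\boldsymbol{\alpha}^\top\boldsymbol{X}$ for some $F\in\mathcal S(\boldsymbol{\mu},\Sigma)$. I will invoke that lemma rather than reprove it.

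This reverse inclusion is the main obstacle, because the nonnegativity constraint on $\boldsymbol{X}$ makes the construction nontrivial. In the unconstrained case one would simply take $\boldsymbol{X}=\boldsymbol{\mu}+\Sigma\boldsymbol{\alpha}(Z-\boldsymbol{\alpha}^\top\boldsymbol{\mu})/(\boldsymbol{\alpha}^\top\Sigma\boldsymbol{\alpha})+\boldsymbol{W}$, with $\boldsymbol{W}$ independent of $Z$ and having the appropriate residual covariance. If the cited lemma only gives the supremum equality, for example through a closure or approximation argument, that is enough for the corollary. If needed, Proposition \ref{pro1} lets me relax the variance equality to an inequality, which makes the approximation easier.

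Once the reduction to the univariate set is in place, I apply Theorem \ref{thm-rho_p} with $\mu$ replaced by $\boldsymbol{\alpha}^\top\boldsymbol{\mu}\in\R_+$ and $\sigma$ by $\sqrt{\boldsymbol{\alpha}^\top\Sigma\boldsymbol{\alpha}}\in\R_+$. This gives the stated identity with $S_3$ and $\tilde\ell$ from \eqref{c_ga_func}.

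The degenerate case $\boldsymbol{\alpha}^\top\Sigma\boldsymbol{\alpha}=0$ needs a brief check. There $Z$ is a.s. constant under every feasible $F$, so both sides reduce to evaluating at the point mass at $\boldsymbol{\alpha}^\top\boldsymbol{\mu}$.
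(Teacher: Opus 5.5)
\paragraph{Gap in the reverse inclusion.}
Your outline matches the paper's: law invariance, then the projection lemma of \cite{CHZ11}, then Theorem~\ref{thm-rho_p}. The step you flag as the main obstacle cannot be settled by that citation. Once $\boldsymbol X$ is restricted to $\R_+^m$, the reverse inclusion is false, and even the two suprema differ. So neither a closure/approximation argument nor relaxing the variance via Proposition~\ref{pro1} can help; the obstruction is the support of $Z$, not its variance. The projection property in \cite{CHZ11} is the unconstrained one on $\R^m$. Its standard proof is essentially your construction $\boldsymbol\mu+\Sigma\boldsymbol\alpha(Z-\boldsymbol\alpha^\top\boldsymbol\mu)/(\boldsymbol\alpha^\top\Sigma\boldsymbol\alpha)+\boldsymbol W$, which in general has negative components.

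\paragraph{Counterexample.}
Take $m=2$, $\boldsymbol\alpha=\boldsymbol\mu=(1,1)^\top$, $\Sigma=\mathrm{diag}(0,1)$, $\rho=\CVaR_{1/3}$ (so $\Gamma=\{\ell\}$ with $\ell(z)=\tfrac32 z_+$), $\theta=0.2$ and $d=100$.
\begin{itemize}
\item Every $F\in\mathcal S(\boldsymbol\mu,\Sigma)$ has $X_1\equiv 1$. So $Z=1+X_2$ with $X_2\ge 0$, $\E[X_2]=1$ and $\E[X_2^2]=2$.
\item Taking $t=0$ in \eqref{eq:RU02} gives $\CVaR_{1/3}(Z\wedge d)\le\CVaR_{1/3}(Z)=1+\CVaR_{1/3}(X_2)\le 1+\tfrac32\E[X_2]=2.5$.
\item Since $(x-k)_+\le x^2/(4k)$ for $k>0$, we get $\E[(Z-d)_+]\le\E[X_2^2]/(4(d-1))=1/198$.
\item Hence the left-hand side is below $2.51$.
\item Now let $G$ put mass $1/3$ at $2-\sqrt2$ and mass $2/3$ at $2+1/\sqrt2$. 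Then $G\in S_3(2,1)$.
\item Because $\inf_t\sup_F\ge\sup_F\inf_t$, the right-hand side is at least $\inf_t\E^G[\tilde\ell(Z,d,t)]=\CVaR^G_{1/3}(Z)=2+1/\sqrt2\approx 2.707$. The premium term vanishes because $d$ exceeds the support of $G$.
\end{itemize}
Replacing $\Sigma$ by $\mathrm{diag}(\varepsilon,1)$ with small $\varepsilon>0$ preserves a strict gap, so the singularity of $\Sigma$ is not the issue.

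\paragraph{What your argument does give.}
It proves a one-sided bound. The inclusion $\subseteq$ you verified, combined with law invariance and Theorem~\ref{thm-rho_p}, shows that the right-hand side dominates the multivariate worst case.

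The paper's own justification invokes Lemma 2.4 of \cite{CHZ11} in exactly the same way, so you inherited this gap rather than introduced it. Equality needs one of two things:
\begin{itemize}
\item Drop the nonnegativity constraint on both $\boldsymbol X$ and $Z$. Theorem~\ref{thm-rho_p} must then be re-checked over $\mathcal P(\R)$, because its localization $t\ge -y_2(\ell)$ relied on $X\wedge d\ge 0$.
\item Add a hypothesis on $(\boldsymbol\mu,\Sigma,\boldsymbol\alpha)$ guaranteeing that the univariate worst-case values are attained, or approached, by laws of the form $\boldsymbol\alpha^\top\boldsymbol X$ with $\boldsymbol X\ge 0$.
\end{itemize}
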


\section{Wasserstein uncertainty set}\label{main-wasser}
The Wasserstein uncertainty set is built on the optimal transport (Wasserstein distance), which goes back to the foundational work of  \cite{K58} and has since become central in optimal transport; see, for example, \cite{V03}, \cite{V08}. In data-driven DRO, Wasserstein balls centered on a nominal (often empirical) distribution provide a flexible way to model distributional uncertainty while preserving tractability; see, for example, \cite{GK23}, \cite{MK18} and the references therein. We begin by recalling the definition of Wasserstein distance. For $m\in\N$ and  $\Xi \subseteq \mathbb{R}^m$, let $\mathcal{P}\left(\Xi\right)$ be the set of all distributions supported within $\Xi$.
\begin{definition}[Wasserstein distance]
  For $p\in[1,\infty)$ and $F_1,F_2\in\mathcal P(\Xi)$, the $p$-Wasserstein distance between  $F_1$ and $F_2$  is defined as
$$
{W}_p(F_1, F_2) = \min_{\pi \in \Pi(F_1, F_2)} \left(\mathbb{E}^{\pi}[\|\xi_1-\xi_2\|^p]\right)^{{1}/{p}},
$$
where  $\|\cdot\|$ is a  norm on $\mathbb{R}^m$ and $\Pi(F_1, F_2)$ is the set of all joint distributions  on $\Xi \times \Xi$ with marginal distributions $F_1$ and $F_2$.
\end{definition}
For the reinsurance problem, we take $\Xi=\mathbb{R}_+$. For  $F_0\in \mathcal P (\mathbb{R}_+)$ and $\varepsilon>0$, the Wasserstein ball centered on $F_0$ with radius $\epsilon$ is defined as
$$
\mathcal{B}_{\varepsilon}(F_0)=\left\{F \in \mathcal{P}\left(\mathbb{R}_+\right) : W_p(F, F_0) \leqslant \varepsilon\right\}.
$$

In this section, 
we focus on the distributionally robust reinsurance optimization problem with Wasserstein ball $\mathcal{B}_{\varepsilon}(F_0)$:
\begin{align}\label{eq:main_wasser}
	\min_{d\in\R_+} \sup_{F \in \mathcal{B}_{\varepsilon}(F_0)} \rho^F\left(X\wedge d +(1+\theta)\E^F[\left(X-d\right)_+]\right),
\end{align}
where $\rho:\mathcal{X}\to \R$ is an ROCE risk measure. We first give an equivalent reformulation of the inner problem of \eqref{eq:main_wasser} for a general center distribution and a general convex loss function. We then focus on the empirical center distribution and the piecewise-linear convex loss function to derive a tractable reformulation, and apply it to mean-CVaR and expectiles, to obtain further simplified optimization programs.

\subsection{Reformulation}\label{main-wasser-1}
In this section, we aim to reformulate the above problem to a finite-dimensional problem for a class of loss functions under the Wasserstein ball centered on a discrete distribution. A key challenge in solving problem \eqref{eq:main_wasser} lies in addressing its inner maximization problem: 
\begin{align}\label{eq:main-wasser-inner}
	  \sup_{F \in \mathcal{B}_{\varepsilon}(F_0)} \rho^F\left(X\wedge d +(1+\theta)\E^F[\left(X-d\right)_+]\right).
\end{align} 
  The following proposition provides the equivalent reformulation of problem \eqref{eq:main-wasser-inner}.
\begin{proposition}\label{switch_wasser}
   Let $ \rho:\mathcal{X}\to \R$ be an ROCE risk measure with a set of increasing convex functions $\Gamma$, and fix $\varepsilon\in\R_+$, $\theta\in\R_+$ and a center distribution $F_0$. For $d\ge 0$, we have that problem \eqref{eq:main-wasser-inner} is equivalent to
\begin{align}\label{eq:switch_wasser}
     \sup_{\ell \in \Gamma} \inf_{t \in \R}\, \sup_{F \in \mathcal{B}_{\varepsilon}(F_0)} \E^F\left[\tilde{\ell}\left(X,d,t\right)\right],
\end{align}
where $\tilde{\ell}(x,d,t)$ is defined in \eqref{c_ga_func}.
\end{proposition}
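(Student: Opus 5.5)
We need to show that
\[
\sup_{F\in\mathcal B_\varepsilon(F_0)}\ \sup_{\ell\in\Gamma}\ \inf_{t}\ \E^F[\tilde\ell(X,d,t)]
\;=\;\sup_{\ell}\ \inf_t\ \sup_{F}\ \E^F[\tilde\ell(X,d,t)].
\]
The plan has three steps: rewrite the objective using translation invariance, swap the two outer suprema, and then justify interchanging $\inf_t$ with $\sup_F$ via a minimax theorem.

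\textbf{Step 1: rewriting the objective.} By translation invariance (Proposition 1(ii)), the constant premium term can be pulled out:
\[
\rho^F\bigl(X\wedge d+(1+\theta)\E^F[(X-d)_+]\bigr)=\rho^F(X\wedge d)+(1+\theta)\E^F[(X-d)_+].
\]
Inserting the definition of ROCE gives
\[
\sup_{\ell\in\Gamma}\inf_{t\in\R}\E^F[\tilde\ell(X,d,t)],
\]
since the premium term does not depend on $t$ or $\ell$. The two suprema, over $F$ and over $\ell$, commute trivially. It therefore suffices to show, for each fixed $\ell\in\Gamma$, that
\[
\sup_{F\in\mathcal B_\varepsilon(F_0)}\inf_{t\in\R}\E^F[\tilde\ell(X,d,t)]=\inf_{t\in\R}\sup_{F\in\mathcal B_\varepsilon(F_0)}\E^F[\tilde\ell(X,d,t)].
\]

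\textbf{Step 2: setting up a minimax theorem.} The inequality "$\le$" is immediate, so the work is in the reverse inequality. I would apply Sion's minimax theorem to $\phi(t,F)=\E^F[\tilde\ell(X,d,t)]$.
\begin{itemize}
\item The ball $\mathcal B_\varepsilon(F_0)$ is convex, because $W_p^p(\cdot,F_0)$ is convex under mixtures: mixing optimal couplings gives a feasible coupling.
\item $\phi$ is linear, hence concave, in $F$.
\item $\phi$ is convex and continuous in $t$, because $\ell$ is convex and $t\mapsto t+\ell(x\wedge d-t)$ is convex.
\end{itemize}
Sion requires compactness of one of the two sets. Rather than compactness of the ball, which holds only in a weak topology and needs upper semicontinuity in $F$, I would compactify the $t$-side.

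\textbf{Step 3: compactifying in $t$ (the main obstacle).} Since $X\wedge d\in[0,d]$, the integrand depends on $X$ only through $X\wedge d$, apart from the $t$-independent premium term. Convexity of $\ell$ with $\ell'(-\infty)<1<\ell'(+\infty)$ then implies two things:
\begin{itemize}
\item for $t>d$, $\partial_t[t+\ell(x\wedge d-t)]=1-\ell'(x\wedge d-t)\ge 1-\ell'(0^-)$, uniformly in $x$;
\item for $t<0$, the slope is $\le 1-\ell'(-t^-)$.
\end{itemize}
If $\ell'(0)$ sits on the correct side of $1$, these give $t^*\in[0,d]$. In general they only give a compact interval $[a,b]$, with $b$ determined by the point where $\ell'$ crosses $1$, shifted by $d$. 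This interval is independent of $F$: take $a$ with $\ell'(a-d)<1$ and $b$ with $\ell'(-b)>1$, which exist because $1\in{\rm int}\,\partial\ell(\R)$. For $t<a$, the function is decreasing in $t$ for every realization $x\wedge d\in[0,d]$; for $t>b$, it is increasing. Hence for every $F$, $\inf_{t\in\R}=\inf_{t\in[a,b]}$, and the same restriction holds after taking $\sup_F$, since the monotonicity is pointwise in $x$. Applying Sion on $[a,b]\times\mathcal B_\varepsilon(F_0)$, with $[a,b]$ compact, $\phi$ continuous convex in $t$ and concave in $F$, yields the interchange.

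Finally, if $\E^F[\ell(X\wedge d-t)]$ is finite, which holds trivially here because the argument is bounded, and $\sup_F\E^F[(X-d)_+]<\infty$ on the Wasserstein ball (true for $p\ge1$ via first moments), then all quantities are finite and the equality of values holds.
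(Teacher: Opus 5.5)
Your proposal is correct and follows essentially the same route as the paper: insert the ROCE definition, swap the suprema over $F$ and $\ell$, use $1\in{\rm int}\,\partial\ell(\R)$ and $X\wedge d\in[0,d]$ to restrict $t$ to an $F$-independent compact interval, then apply Sion's theorem on that interval times the convex Wasserstein ball. One slip to fix: your endpoint conditions $\ell'(a-d)<1$ and $\ell'(-b)>1$ are swapped and sign-reversed (for CVaR they would give an interval excluding the minimizer); the correct choice, as in the paper's $[-y_2(\ell),\,d-y_1(\ell)]$, is $a,b$ with $\ell'(y)>1$ for all $y>-a$ and $\ell'(y)<1$ for all $y<d-b$.
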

 Problem \eqref{eq:main-wasser-inner} remains infinite-dimensional and is therefore computationally intractable both in its original formulation and in the representation \eqref{eq:switch_wasser}. Based on the above equivalent reformulation, we next focus on the specific reference distribution and specific loss function to derive a tractable reformulation of problem \eqref{eq:main-wasser-inner}. To leverage observed data in reinsurance decision-making, we consider the data-driven distributionally robust optimization framework under a Wasserstein ball centered at the empirical distribution. Specifically, given sample points $\{\widehat{x}_i\}_{i=1}^N$, we take the reference distribution as $F_0=\frac{1}{N}\sum_{i=1}^N\delta_{\widehat{x}_i}$, where $\delta_{x}$ denotes the Dirac distribution placing unit mass at $x$. Moreover, since any convex function can be approximated arbitrarily well by piecewise-linear convex functions, and such functions are computationally convenient, we consider using piecewise-linear convex loss function in DROR problem under Wasserstein ball. Specifically, for $\ell\in\Gamma$, we assume that $\ell(y)=\max_{k\le K}a_{\ell,k}y+b_{\ell,k}$, where $0\le a_{\ell,1}\le\ldots\le a_{\ell,K}$. We also  assume that there exist $-\infty=h_0<h_1<\ldots<h_{K}<h_{K+1}=\infty$ such that  $\ell(y)=a_{\ell,k}y+b_{\ell,k}$ for $y\in (h_{k-1},h_k]$. Based on these assumptions, applying the worst-case expectation result in \cite{KENS19}, we find that the inner problem of \eqref{eq:main_wasser} can be reduced to a finite-dimensional problem. This main result is presented in the following theorem. 
\begin{theorem}\label{pro_wasser_ball_general}
Fix $\varepsilon,\theta\in\R_+$ and samples $\{\widehat x_i\}_{i=1}^N$, and let 
$F_0:=\frac1N\sum_{i=1}^N\delta_{\widehat x_i}$ be the empirical distribution.
Let $\rho:\mathcal X\to\R$ be an ROCE risk measure with loss family $\Gamma$.
Assume that each $\ell\in\Gamma$ is increasing piecewise linear with $K$ pieces, i.e., $\ell(y)=\max_{1\le k\le K}\{a_{\ell,k}y+b_{\ell,k}\}$ with 
$0\le a_{\ell,1}\le\cdots\le a_{\ell,K}$; equivalently, there exist breakpoints $-\infty=h_0<h_1<\cdots<h_{K}<h_{K+1}=\infty$
such that $\ell(y)=a_{\ell,k}y+b_{\ell,k}$ for $y\in(h_{k-1},h_k]$.
Let $p\in[1,\infty]$ and $q$ be its conjugate exponent ($1/p+1/q=1$), and define  $\phi(q):=(q-1)^{q-1}/q^q$ for $q>1$ and $\phi(1):=1$. Then, for any $d\ge 0$, we have
\begin{align}\label{eq:wasser_ball_general}
     &\sup_{F \in \mathcal{B}_{\varepsilon}(F_0)} \rho^F\left(X \wedge d+(1+\theta)\E^F\left[\left(X-d\right)_+\right] \right)    =\sup_{\ell\in\Gamma}\inf _{t\in\mathbb{R}}g_\ell(d,t),
\end{align}
where $g_\ell(d,t)$ is defined as follows.  \begin{itemize}
\item[(i)] If $t\in[d-h_k,d-h_{k-1})$ and $a_{\ell,k}\le 1+\theta$,
\begin{align*}  
g_\ell(d,t)=
\left\{\begin{array}{lll}
&\inf~~   \lambda \varepsilon^p+\frac{1}{N} \sum_{i=1}^N s_i&  \\
&\text {\rm s.t. }~\lambda\in\mathbb{R}_+, s_i\in\mathbb{R}, z_{i,j}\in\R &\forall i \leq N,~\forall j\le k+1,\\
&~~~~~~~(1-a_{\ell,j})t-\widehat{x}_iz_{i, j}+b_{\ell,j}+\phi(q)\lambda\left|\frac{z_{i,j}}{\lambda}\right|^q\leq s_i &\forall i \leq N,~\forall j\le k,\\
&~~~~~~~~z_{i,j}\leq -a_{\ell,j} &\forall i \leq N,~ \forall j\le k,\\
&~~~~~~~ (1-a_{\ell,k})t-\widehat{x}_iz_{i,k+1}\\
&~~~~~~~+b_{\ell,k}+(a_{\ell,k}-(1+\theta))d+\phi(q)\lambda\left|\frac{z_{i,k+1}}{\lambda}\right|^q \leq s_i &\forall i \leq N,\\
&~~~~~~~~z_{i,k+1} \leq -(1+\theta) &\forall i \leq N.
\end{array}\right.
\end{align*}
\item[(ii)] If $t\in[d-h_k,d-h_{k-1})$ and $a_{\ell,k}> 1+\theta$,
\begin{align*}
g_\ell(d,t)=\left\{\begin{array}{lll}&\inf~ \lambda \varepsilon^p+\frac{1}{N} \sum_{i=1}^N s_i & \\
&\text {\rm s.t. }\lambda\in\mathbb{R}_+,v_{i, j}\in\mathbb{R}_-, s_i\in\R, z_{i,j}\in\mathbb{R}, &\forall i \leq N,~\forall j\le k,\\  
&~~~~~~(1-a_{\ell,j})t-\widehat{x}_iz_{i, j}\\
&~~~~~+(z_{i,j}-v_{i,j}+a_{\ell,j})d+b_{\ell,j}+\phi(q)\lambda\left|\frac{z_{i,j}}{\lambda}\right|^q\leq s_i &\forall i \leq N,~\forall j\le k,\\
&~~~~~-a_{\ell,j}\le z_{i,j}-v_{i,j} \leq 0 &\forall i \leq N,~ \forall j\le k-1,\\
&~~~~~-a_{\ell,k}\le z_{i,k}-v_{i,k}\le -(1+\theta) &\forall i \leq N.
\end{array}\right.
\end{align*}
\end{itemize}
\end{theorem}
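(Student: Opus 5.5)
Starting from Proposition \ref{switch_wasser}, the inner problem \eqref{eq:main-wasser-inner} equals $\sup_{\ell\in\Gamma}\inf_{t\in\R}\sup_{F\in\mathcal B_\varepsilon(F_0)}\E^F[\tilde\ell(X,d,t)]$. It therefore suffices to fix $\ell\in\Gamma$, $d\ge0$ and $t$, and show that $g_\ell(d,t):=\sup_{F\in\mathcal B_\varepsilon(F_0)}\E^F[\tilde\ell(X,d,t)]$ equals the stated finite-dimensional program. The plan is to write $x\mapsto\tilde\ell(x,d,t)$ as a pointwise maximum of finitely many affine pieces on $\R_+$. Then we apply the strong-duality result of \cite{KENS19} for worst-case expectations of piecewise-affine functions over $p$-Wasserstein balls around an empirical distribution. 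That result gives
\[
\sup_{F\in\mathcal B_\varepsilon(F_0)}\E^F\Big[\max_j(\alpha_jx+\beta_j)\Big]=\inf_{\lambda\ge0}\Big\{\lambda\varepsilon^p+\tfrac1N\textstyle\sum_i\sup_{x\ge0}\big[\max_j(\alpha_jx+\beta_j)-\lambda|x-\widehat x_i|^p\big]\Big\}.
\]
Each inner $\sup$ is then dualized via the conjugate of $\lambda|\cdot|^p$. This produces the term $\phi(q)\lambda|z/\lambda|^q$. The support constraint $x\ge0$ produces the sign constraints on $z_{i,j}$ (or the extra multiplier $v_{i,j}\le0$).

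The first concrete step is to compute the affine pieces. For $x\le d$ we have $\tilde\ell=t+\ell(x-t)$. For $x>d$ we have $\tilde\ell=t+\ell(d-t)+(1+\theta)(x-d)$. Given $t\in[d-h_k,d-h_{k-1})$, we get $d-t\in(h_{k-1},h_k]$. So on $[0,d]$ the argument $x-t$ ranges up to $d-t$, and only pieces $j\le k$ of $\ell$ are active; $\ell(x-t)=\max_{j\le k}\{a_{\ell,j}(x-t)+b_{\ell,j}\}$ there. Past $d$, the slope is $1+\theta$ with intercept $t+a_{\ell,k}(d-t)+b_{\ell,k}-(1+\theta)d$.

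Case (i): $a_{\ell,k}\le1+\theta$. Then $x\mapsto\tilde\ell(x,d,t)$ is convex on $\R_+$. The slopes are $a_{\ell,1}\le\dots\le a_{\ell,k}\le1+\theta$, and at $x=d$ the left slope $a_{\ell,k}$ does not exceed the right slope. Hence $\tilde\ell=\max\{\text{the }k\text{ pieces}, \text{the tail piece}\}$ globally on $\R_+$. The $k+1$ affine functions have slopes $a_{\ell,j}$ with intercepts $(1-a_{\ell,j})t+b_{\ell,j}$, plus the tail slope $1+\theta$ with intercept $(1-a_{\ell,k})t+b_{\ell,k}+(a_{\ell,k}-(1+\theta))d$. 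Plugging these into the \cite{KENS19} reformulation with support $\R_+$ gives the constraints of (i). The multiplier for $x\ge0$ merges into $z_{i,j}\le-\alpha_j$ after the usual change of variables.

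Case (ii): $a_{\ell,k}>1+\theta$. The function is no longer convex: it has a concave kink at $d$. So we write it instead as a maximum over $j\le k$ of concave functions
\[
\psi_j(x)=t+a_{\ell,j}(x\wedge d-t)+b_{\ell,j}+(1+\theta)(x-d)_+ .
\]
This equality holds because $\ell(x\wedge d-t)=\max_{j\le k}\{a_{\ell,j}(x\wedge d-t)+b_{\ell,j}\}$. Each $\psi_j$ is the minimum of two affine functions. \cite{KENS19} also covers maxima of concave piecewise-affine functions, with an extra dual multiplier for the inner min; this is the origin of $v_{i,j}$ and the two-sided constraints $-a_{\ell,j}\le z_{i,j}-v_{i,j}\le\cdots$. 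For $j<k$ the tail slope would be $\min(a_{\ell,j},1+\theta)$, which needs care. I expect that pieces $j<k$ can be replaced by their concave hull contribution, with upper bound $0$ on $z-v$. The reason is that these pieces are dominated beyond $d$ by piece $k$, so their tail can be taken flat without changing the maximum.

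The main obstacle is this last point in case (ii): justifying the constraint structure for $j\le k-1$ versus $j=k$. We must check that modifying the non-maximal pieces does not change $\max_j\psi_j$ on $\R_+$. Everything else is routine conic/Fenchel duality for $\sup_{x\ge0}\{\alpha x+\beta-\lambda|x-\widehat x_i|^p\}$, including the $p=\infty$ ($q=1$, $\phi=1$) limit. Finally, we insert $g_\ell(d,t)$ into $\sup_{\ell}\inf_t$.
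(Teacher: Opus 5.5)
Your plan is essentially the paper's proof: reduce via Proposition~\ref{switch_wasser}, write $x\mapsto\tilde\ell(x,d,t)$ as a maximum of concave pieces, and apply Theorem~8 of \cite{KENS19} with explicit conjugates. In case (ii) the paper uses exactly your flat-tail pieces $t+a_{\ell,j}(x\wedge d-t)+b_{\ell,j}$ for $j\le k-1$ (conjugate domain $[-a_{\ell,j},0]$), and keeps the tail slope $1+\theta$ only for $j=k$ (conjugate domain $[-a_{\ell,k},-(1+\theta)]$). Two small corrections follow: the flattening is necessary, because your $\psi_j$ is convex (a maximum, not a minimum, of two affine functions) whenever $a_{\ell,j}<1+\theta$, and the obstacle you flag is just the one-line check $t+a_{\ell,j}(d-t)+b_{\ell,j}\le t+\ell(d-t)+(1+\theta)(x-d)$ for $x>d$; moreover, $v_{i,j}\le 0$ comes from the support function $\sigma_{\R_+}(v)=\iota_{\R_-}(v)$ of the constraint $x\ge 0$, not from a multiplier for the inner minimum.
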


\subsection{Examples}\label{wasser_sample}
Applying Theorem \ref{pro_wasser_ball_general} to the DROR problem under the risk measures in Examples \ref{ex1} and \ref{ex2}, we have the following results.  First, we give a tractable reformulation of the DROR problem under mean-CVaR.
\begin{proposition}\label{wasser_CVaR}
   Take $\rho (X)=\eta_1\E[X]+(1-\eta_1)\CVaR_{\xi}(X)$, with $0 \leq \eta_1<1<\eta_2$ and $\xi=({\eta_2-1})/({\eta_2-\eta_1})$. The following statements hold.  
	\begin{itemize}
		\item [(a)] If $\eta_2\le 1+\theta$,  then the optimal deductible  of problem  \eqref{eq:main_wasser} is $d^*=\infty$. 
		\item [(b)] If $\eta_2> 1+\theta$, then problem \eqref{eq:main_wasser} admits the finite-dimensional reformulation
        \begin{align*}
        \left\{\begin{array}{lll}&\inf~  \lambda \varepsilon^p+\frac{1}{N} \sum_{i=1}^N s_i & \\
&\text {\rm s.t. } d\in\mathbb{R}_+,t\in[0,d],\lambda\in\mathbb{R}_+, v_{i, 2}\in\mathbb{R}_-,s_i\in\R, z_{i, 1},z_{i, 2}\in\mathbb{R}&\forall i\leq N,\\
&~~~~~(1-\eta_1)t-\widehat{x}_iz_{i, 1}+\phi(q)\lambda\left|\frac{z_{i,1}}{\lambda}\right|^q \leq s_i &\forall i \leq N,\\
&~~~~~z_{i,1}\le -\eta_1 &\forall i \leq N,\\
&~~~~~(1-\eta_2)t-\widehat{x}_iz_{i, 2}+(z_{i,2}-v_{i,2}+\eta_2)d+\phi(q)\lambda\left|\frac{z_{i,2}}{\lambda}\right|^q \leq s_i &\forall i \leq N,\\
&~~~~-\eta_2\le z_{i,2}-v_{i,2}\le -(1+\theta) &\forall i \leq N.
\end{array}\right.
        \end{align*}
	\end{itemize} 
\end{proposition}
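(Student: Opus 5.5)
The plan is to specialize Proposition \ref{switch_wasser} and Theorem \ref{pro_wasser_ball_general} to the singleton family $\Gamma=\{\ell\}$ with $\ell(z)=\eta_2 z_+-\eta_1(-z)_+$, which by Example \ref{ex2}(i) generates the mean-CVaR risk measure. This $\ell$ is increasing, convex and piecewise linear with $K=2$ pieces: $a_{\ell,1}=\eta_1$, $a_{\ell,2}=\eta_2$, $b_{\ell,1}=b_{\ell,2}=0$, and the single breakpoint is $h_1=0$. Since $\eta_1<1<\eta_2$, the condition $1\in{\rm int}\,\partial\ell(\R)$ holds.

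For part (a), assume $\eta_2\le 1+\theta$, so that $\sup\partial\ell(\R)\le 1+\theta$. By the remark after Proposition \ref{pro: convexset}, which holds for any convex uncertainty set and in particular for the Wasserstein ball, we then get $d^*=\infty$. To make this self-contained, I would check that $x\mapsto \tilde\ell(x,d,t)$ is pointwise nonincreasing in $d$ when $\eta_2\le 1+\theta$.
\begin{itemize}
\item On $\{x>d\}$, the derivative in $d$ is $\ell'(d-t)-(1+\theta)\le \eta_2-(1+\theta)\le 0$.
\item On $\{x\le d\}$, $\tilde\ell$ does not depend on $d$.
\end{itemize}
Hence $\sup_F\E^F[\tilde\ell]$ is nonincreasing in $d$ for each $t$, and so is $\inf_t\sup_F$ by Proposition \ref{switch_wasser}. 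The infimum over $d$ is therefore approached as $d\to\infty$.

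For part (b), assume $\eta_2>1+\theta$. I first restrict $t$ to $[0,d]$. Since $X\wedge d\in[0,d]$, the optimizer $t$ of the OCE of the retained loss $X\wedge d+c$ can be taken to be the $\xi$-quantile of $X\wedge d$, which lies in $[0,d]$. This holds for every $F$, so after the minimax interchange one may still restrict $t$ to $[0,d]$. An alternative check is that $t\mapsto \E^F[\tilde\ell]$ has nonpositive slope for $t<0$ (slope $1-\eta_2<0$) and nonnegative slope for $t>d$ (slope $1-\eta_1>0$), uniformly in $F$. With $t\in[0,d]$, the relevant piece has $d-t\ge 0=h_1$, so $t\in[d-h_2,d-h_1)$ with $k=2$; the boundary $t=d$ is handled by continuity. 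Because $a_{\ell,2}=\eta_2>1+\theta$, case (ii) of Theorem \ref{pro_wasser_ball_general} applies with $k=2$.

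Writing out the case (ii) constraints gives two types of constraint, both with $b=0$.
\begin{itemize}
\item For $j=1$: $(1-\eta_1)t-\widehat x_iz_{i,1}+(z_{i,1}-v_{i,1}+\eta_1)d+\phi(q)\lambda|z_{i,1}/\lambda|^q\le s_i$, with $-\eta_1\le z_{i,1}-v_{i,1}\le 0$.
\item For $j=2$: $(1-\eta_2)t-\widehat x_iz_{i,2}+(z_{i,2}-v_{i,2}+\eta_2)d+\phi(q)\lambda|z_{i,2}/\lambda|^q\le s_i$, with $-\eta_2\le z_{i,2}-v_{i,2}\le-(1+\theta)$.
\end{itemize}
The $j=2$ constraint already matches the target. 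The step I expect to be the main obstacle is simplifying the $j=1$ block to the stated form, which has no $v_{i,1}$ and no $d$-term and carries the constraint $z_{i,1}\le-\eta_1$. My plan is to eliminate $v_{i,1}\le 0$ by direct minimization. Since $d\ge 0$, the term $-v_{i,1}d$ is minimized by taking $v_{i,1}$ as large as possible subject to the coupling constraints. Setting $w=z_{i,1}-v_{i,1}\in[-\eta_1,0]$, the $d$-coefficient is $w+\eta_1\ge 0$, so the best choice is $w=-\eta_1$, i.e.\ $v_{i,1}=z_{i,1}+\eta_1$. This choice is feasible exactly when $z_{i,1}\le-\eta_1$, which is the stated constraint, and it removes the $d$-term. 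I then need to argue that choosing $z_{i,1}>-\eta_1$ (forcing $v_{i,1}=0$, $w=z_{i,1}$) never gives a smaller left-hand side. I would attempt this by a monotonicity comparison in $z_{i,1}$: the left-hand side equals $(1-\eta_1)t+z_{i,1}(d-\widehat x_i)+\eta_1 d+\phi(q)\lambda|z_{i,1}/\lambda|^q$ in that regime. If this comparison fails, I would instead redo the derivation of case (ii) directly for the $j=1$ affine piece. Since that piece corresponds to the region $X\wedge d\le t$, on which $d$ does not enter, one can apply \cite{KENS19} to the function $(1-\eta_1)t+\eta_1(x\wedge d)$ and bound it by $\eta_1 x$.

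Finally, taking $\inf$ over $(d,t)$ jointly with the inner $\inf$ over $(\lambda,s,z,v)$ gives a single minimization. This is valid because $\Gamma$ is a singleton, as in Corollary \ref{prop:2}, and yields the stated program.
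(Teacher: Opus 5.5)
Your part (a), the restriction to $t\in[0,d]$, and the final joint infimum over $(d,t)$ are fine and agree with the paper. The gap is in the step you flagged in part (b). The claim that choosing $z_{i,1}>-\eta_1$ (hence $v_{i,1}=0$) never lowers the left-hand side is false. In that regime the left-hand side is $(1-\eta_1)t+z_{i,1}(d-\widehat x_i)+\eta_1 d+\phi(q)\lambda|z_{i,1}/\lambda|^q$. At $z_{i,1}=0$ this equals $(1-\eta_1)t+\eta_1 d$, whereas every $z_{i,1}\le-\eta_1$ gives at least $(1-\eta_1)t+\eta_1\widehat x_i$. So whenever $\widehat x_i>d$ and $\eta_1>0$, the $j=1$ block of case (ii) of Theorem \ref{pro_wasser_ball_general} is strictly looser than the target block. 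For example, $p=q=2$, $\lambda=1$, $\eta_1=1/2$, $t=0$, $d=1$, $\widehat x_i=10$ gives $1/2$ versus $81/16$.

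The reason is structural. The theorem's block is the dual of $\sup_{x\ge0}\{t+\eta_1(x\wedge d-t)-\lambda|x-\widehat x_i|^p\}$, while the target block is the dual of the same expression with the untruncated piece $t+\eta_1(x-t)$, and these suprema differ. The two programs agree only in optimal value, because in exactly those cases the $j=2$ block is the binding one. No constraint-by-constraint elimination of $v_{i,1}$ can produce the stated program.

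Your fallback is the right route (it is the paper's), but as written it is incomplete. Replacing $x\wedge d$ by $x$ enlarges the first piece, which a priori could enlarge the worst-case expectation. Your remark that $d$ does not enter on $\{x\wedge d\le t\}$ only describes where the original piece is active. What must be checked is that the enlarged piece stays dominated on $\{x>d\}$: for $0\le t\le d<x$,
\[
t+\eta_1(x-t)\le t+\eta_2(d-t)+(1+\theta)(x-d),
\]
because $\eta_1\le\eta_2$ and $\eta_1<1+\theta$. Together with the trivial cases $x\le t$ and $t<x\le d$, this gives $\tilde{\ell}(x,d,t)=\max\{t+\eta_1(x-t),\,t+\eta_2(x\wedge d-t)+(1+\theta)(x-d)_+\}$ on $\R_+$ for $t\in[0,d]$.

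Now apply Theorem 8 of \cite{KENS19} directly to this decomposition, instead of reading off case (ii) of Theorem \ref{pro_wasser_ball_general}. The conjugate of $-(t+\eta_1(\cdot-t))$ is $(1-\eta_1)t+\iota_{\{-\eta_1\}}(y)$. Hence $z_{i,1}-v_{i,1}=-\eta_1$ with $v_{i,1}\le 0$ collapses to $z_{i,1}\le-\eta_1$ with no $d$-term, which is exactly the stated block.

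Equivalently, you can argue at the program level. Feasibility in both blocks of case (ii) gives $s_i\ge\sup_{x\ge0}\{\tilde{\ell}(x,d,t)-\lambda|x-\widehat x_i|^p\}$ by weak duality. The domination inequality then bounds this below by the supremum for the untruncated piece, which equals the minimum of the target block. Either way, the domination check is the missing step.
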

By Proposition \ref{wasser_CVaR},
we immediately obtain the following corollary for CVaR. 
\begin{corollary}
Take $\rho={\rm CVaR}_\alpha$, $\alpha\in [0,1]$. The following statements hold.  
\begin{itemize}
   \item [{\rm (i)}] If $({1-\alpha})(1+\theta)\ge1$,  then the optimal deductible  of problem  \eqref{eq:main_wasser} is $d^*=\infty$.
   \item [{\rm (ii)}] If $({1-\alpha})(1+\theta)<1$, then problem \eqref{eq:main_wasser} admits the finite-dimensional reformulation
        \begin{align*}
        \left\{\begin{array}{lll}&\inf~  \lambda \varepsilon^p+\frac{1}{N} \sum_{i=1}^N s_i & \\
&\text {\rm s.t. }d\in\mathbb{R}_+,t\in[0,d],\lambda\in\mathbb{R}_+, v_{i, 2}\in\mathbb{R}_-,s_i, z_{i, 1},z_{i, 2}\in\mathbb{R}&\forall i \leq N,\\&~~~~~ t-\widehat{x}_iz_{i, 1}+\phi(q)\lambda\left|\frac{z_{i,1}}{\lambda}\right|^q \leq s_i &\forall i \leq N,\\
&~~~~~z_{i,1}\le 0 &\forall i \leq N,\\
&~~~~-\frac{\alpha}{1-\alpha}t-\widehat{x}_iz_{i, 2}+(z_{i,2}-v_{i,2}+\frac{1}{1-\alpha})d+\phi(q)\lambda\left|\frac{z_{i,2}}{\lambda}\right|^q \leq s_i &\forall i \leq N,\\
&~~~~-\frac{1}{1-\alpha}\le z_{i,2}-v_{i,2}\le -(1+\theta) &\forall i \leq N.
\end{array}\right.
        \end{align*}
\end{itemize}  
\end{corollary}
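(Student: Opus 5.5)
The corollary is a direct specialization of Proposition \ref{wasser_CVaR}. By Example \ref{ex2}(i), $\CVaR_\alpha$ is the mean-CVaR measure with $\eta_1=0$ and $\xi=\alpha$. The relation $\xi=(\eta_2-1)/(\eta_2-\eta_1)$ with $\eta_1=0$ gives $\alpha=(\eta_2-1)/\eta_2$, so $\eta_2=1/(1-\alpha)$. This matches the loss function $\ell(z)=z_+/(1-\alpha)$ from Example \ref{ex1}(i), and we have $\ell(z)=\eta_2 z_+-\eta_1(-z)_+$. The standing requirement $0\le\eta_1<1<\eta_2$ holds whenever $\alpha\in(0,1)$.

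\textbf{Translating the case conditions.} The condition $\eta_2\le 1+\theta$ reads $1/(1-\alpha)\le 1+\theta$, i.e. $(1-\alpha)(1+\theta)\ge 1$. Part (a) of Proposition \ref{wasser_CVaR} then gives $d^*=\infty$, which is (i). Conversely, $\eta_2>1+\theta$ is $(1-\alpha)(1+\theta)<1$, and part (b) applies.

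\textbf{Substituting into the program.} In part (b) we put $\eta_1=0$ and $\eta_2=1/(1-\alpha)$:
\begin{itemize}
\item the first constraint becomes $t-\widehat x_i z_{i,1}+\phi(q)\lambda|z_{i,1}/\lambda|^q\le s_i$, with $z_{i,1}\le 0$;
\item the coefficient $(1-\eta_2)t$ equals $-\frac{\alpha}{1-\alpha}t$, and the term $+\eta_2 d$ becomes $+\frac{1}{1-\alpha}d$;
\item the box constraint becomes $-\frac{1}{1-\alpha}\le z_{i,2}-v_{i,2}\le-(1+\theta)$.
\end{itemize}
This is exactly the displayed program in (ii).

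\textbf{Boundary levels.} The only real care needed is at $\alpha\in\{0,1\}$, where the standing assumptions on $\eta_1,\eta_2$ fail.
\begin{itemize}
\item For $\alpha=0$ we have $\CVaR_0=\E$. Since $\theta\ge0$, $(1-\alpha)(1+\theta)\ge1$ holds, and we are in case (i). The objective is $\E[X\wedge d]+(1+\theta)\E[(X-d)_+]$, which is nonincreasing in $d$, so $d^*=\infty$ holds directly. Alternatively, one can invoke the remark after Proposition \ref{pro: convexset} with $\sup\partial\ell(\R)=1\le1+\theta$.
\item For $\alpha=1$ the condition $(1-\alpha)(1+\theta)<1$ holds, but $\eta_2$ is infinite. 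Here I would either exclude this case by convention or interpret it as the limiting program, noting the statement's formulas require $\alpha<1$.
\end{itemize}

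None of these steps is a real obstacle; the proof is a routine substitution.
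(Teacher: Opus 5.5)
Your proposal is correct and follows the paper's route: the paper states the corollary as an immediate consequence of Proposition~\ref{wasser_CVaR} with $\eta_1=0$ and $\eta_2=1/(1-\alpha)$, and your translations of the case conditions and of each constraint are right. Your separate treatment of $\alpha\in\{0,1\}$, which the paper does not address, is sound when it uses the direct monotonicity argument at $\alpha=0$; however, the alternative appeal to the criterion $\sup\partial\ell(\R)\le 1+\theta$ there is not strictly licensed, because $\ell(z)=z_+$ has $\partial\ell(\R)=[0,1]$ and so violates the standing assumption $1\in{\rm int}\,\partial\ell(\R)$.
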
 

Based on Theorem \ref{pro_wasser_ball_general}, when the risk measure is taken as the coherent expectile, that is, $\rho=e_\beta$  with $\beta\ge 1/2$, we can further reformulate  the DROR problem as follows. 
\begin{proposition}\label{expectile_wasser}
     For $\rho=e_\beta$, $\beta \in [1/2,1]$, and $\nu=\beta/(1-\beta)$, the following statements hold.
     \begin{itemize}
		\item [(a)] If $\nu\le(1+\theta)$,  then the optimal deductible  of problem  \eqref{eq:main_wasser} is $d^*=\infty$. 
		\item [(b)] If $\nu> (1+\theta)$, then the inner problem of  \eqref{eq:main_wasser} admits the finite-dimensional reformulation
        \begin{align}
    \max\left\{\sup_{\gamma\in[1/\nu,(1+\theta)/\nu]}g_1(d,\gamma),\sup_{\gamma\in((1+\theta)/\nu,1]}g_2(d,\gamma)\right\},
\end{align}  
where  $g_1(d,\gamma)$ and $g_2(d,\gamma)$ are   the optimal values of the following two problems 
\begin{align*}  
 \left\{\begin{array}{lll}&\inf~  \lambda \varepsilon^p+\frac{1}{N} \sum_{i=1}^N s_i & \\
&\text {\rm s.t. }t\in[0,d],\lambda\in\mathbb{R}_+, s_i,z_{i, 1},z_{i, 2},z_{i, 3}\in\mathbb{R}&\forall i \leq N,\\
&~~~~(1-\gamma)t-\widehat{x}_iz_{i, 1}+\phi(q)\lambda\left|\frac{z_{i,1}}{\lambda}\right|^q \leq s_i &\forall i \leq N,\\
&~~~~~z_{i,1}\le -\gamma &\forall i \leq N,\\
&~~~~(1-\gamma\nu)t-\widehat{x}_iz_{i, 2}+\phi(q)\lambda\left|\frac{z_{i,2}}{\lambda}\right|^q \leq s_i &\forall i \leq N,\\
&~~~~~z_{i,2}\le -\gamma\nu &\forall i \leq N,\\
&~~~~(1-\gamma\nu)t+(\gamma\nu-(1+\theta))d-\widehat{x}_iz_{i, 3}+\phi(q)\lambda\left|\frac{z_{i,3}}{\lambda}\right|^q \leq s_i &\forall i \leq N,\\
&~~~~~z_{i,3}\le -(1+\theta) &\forall i \leq N,\end{array}\right.\end{align*} 
and 
\begin{align*}\left\{\begin{array}{lll}  
 &\inf~  \lambda \varepsilon^p+\frac{1}{N} \sum_{i=1}^N s_i & \\
&\text {\rm s.t. }t\in[0,d],\lambda\in\mathbb{R}_+,  v_{i, 2}\in\mathbb{R}_-,s_i,z_{i, 1},z_{i, 2}\in\mathbb{R}&\forall i \leq N,\\
&~~~~~(1-\gamma)t-\widehat{x}_iz_{i, 1}+\phi(q)\lambda\left|\frac{z_{i,1}}{\lambda}\right|^q \leq s_i &\forall i \leq N,\\
&~~~~~z_{i,1}\le -\gamma &\forall i \leq N,\\
&~~~~~(1-\gamma\nu)t+(z_{i,2}-v_{i,2}+\gamma\nu)d-\widehat{x}_iz_{i, 2}+\phi(q)\lambda\left|\frac{z_{i,2}}{\lambda}\right|^q \leq s_i &\forall i \leq N,\\
&~~~~~-\gamma\nu\le z_{i,2}-v_{i,2}\le -(1+\theta) &\forall i \leq N,\end{array}\right.\end{align*} 
	\end{itemize} 
    respectively.
\end{proposition}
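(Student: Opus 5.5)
We will obtain Proposition \ref{expectile_wasser} by specializing Proposition \ref{switch_wasser} and Theorem \ref{pro_wasser_ball_general} to the expectile family of Example \ref{ex1}(ii), $\Gamma=\{\ell_\gamma:\gamma\in[1/\nu,1]\}$ with $\ell_\gamma(z)=\gamma\nu z_+-\gamma(-z)_+$. Each $\ell_\gamma$ is increasing piecewise linear with $K=2$ pieces: $a_{\ell,1}=\gamma$, $a_{\ell,2}=\gamma\nu$, $b_{\ell,1}=b_{\ell,2}=0$, and a single breakpoint $h_1=0$. (For $\beta=1/2$, i.e. $\nu=1$, the two slopes coincide and the formulas still apply.)

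\textbf{Part (a).} Assume $\nu\le 1+\theta$. Then $\sup\partial\ell_\gamma(\R)=\gamma\nu\le\nu\le 1+\theta$ for every $\gamma$. We invoke the observation stated after Proposition \ref{pro: convexset}, which holds for any convex uncertainty set and hence for the Wasserstein ball, and conclude $d^*=\infty$. The mechanism is that increasing $d$ replaces the payment $(1+\theta)(x-d)_+$ by retained slope at most $\gamma\nu\le 1+\theta$. Pointwise in $(x,t)$, $\tilde\ell(x,d,t)$ is therefore nonincreasing in $d$. Taking expectations, then $\sup_F$, $\inf_t$ and $\sup_\ell$ in \eqref{eq:switch_wasser} preserves this monotonicity.

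\textbf{Part (b), reduction of $t$.} Assume $\nu>1+\theta$. By Theorem \ref{pro_wasser_ball_general}, the inner value equals $\sup_{\gamma}\inf_t g_{\ell_\gamma}(d,t)$. First we show that $t$ may be restricted to $[0,d]$. Because $X\ge 0$, the argument $X\wedge d$ lies in $[0,d]$, and the OCE minimizer $t$ is a quantile-type point of $X\wedge d$. More directly, the map $t\mapsto t+\E[\ell_\gamma(X\wedge d-t)]$ has derivative $1-\gamma\nu<0$ for $t<0$ and $1-\gamma\ge 0$ for $t>d$. Since the worst case over $F$ is a supremum of such functions, the monotonicity survives in $t$, and $\inf_t$ can be taken over $[0,d]$.

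\textbf{Part (b), matching the pieces.} With $h_0=-\infty$, $h_1=0$, $h_2=\infty$, every $t\in[0,d]$ has $d-t\ge 0$ and so lies in $[d-h_1,d-h_0)$. This is the case $k=1$, with retained argument $y=x\wedge d-t$ ranging over both pieces. To avoid tracking which piece is active, we rederive the constraints directly from the \cite{KENS19} worst-case expectation of a maximum of affine functions. The idea is to write
\[
\tilde\ell_\gamma(x,d,t)=\max\{(1-\gamma)t+\gamma(x\wedge d),\ (1-\gamma\nu)t+\gamma\nu(x\wedge d)\}+(1+\theta)(x-d)_+
\]
as a maximum of concave-in-$x$ pieces on $\R_+$.

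\textbf{Case $\gamma\nu\le 1+\theta$}, i.e. $\gamma\le(1+\theta)/\nu$. Here $\tilde\ell_\gamma$ is convex in $x$ and equals the maximum of three affine functions:
\begin{itemize}
\item $(1-\gamma)t+\gamma x$;
\item $(1-\gamma\nu)t+\gamma\nu x$;
\item $(1-\gamma\nu)t+(\gamma\nu-(1+\theta))d+(1+\theta)x$.
\end{itemize}
The first piece is redundant for $x>d$, and the pieces for slope above $d$ are consistent. Applying the duality of \cite{KENS19} for piecewise-affine losses on the support $\R_+$ gives constraints $z_{i,j}\le -(\text{slope}_j)$ together with the $\phi(q)\lambda|z/\lambda|^q$ terms. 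This yields exactly $g_1$.

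\textbf{Case $\gamma\nu>1+\theta$.} Now the upper piece is the concave function $(1-\gamma\nu)t+\gamma\nu(x\wedge d)+(1+\theta)(x-d)_+$. It equals the minimum of $\gamma\nu x$ and $\gamma\nu d+(1+\theta)(x-d)$, plus $(1-\gamma\nu)t$. Handling a concave piece requires the auxiliary dual variable $v_{i,2}\le 0$, exactly as in Theorem \ref{pro_wasser_ball_general}(ii), and produces the constraint $-\gamma\nu\le z_{i,2}-v_{i,2}\le-(1+\theta)$. The first piece keeps its affine form. This gives $g_2$, which reproduces Theorem \ref{pro_wasser_ball_general}(ii) with $k=2$ restricted to the relevant pieces.

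\textbf{Assembly and main obstacle.} Finally, we split $\sup_{\gamma\in[1/\nu,1]}$ at $(1+\theta)/\nu$ and merge the $\inf_{t\in[0,d]}$ into each inner minimization, which gives the stated maximum. The main obstacle is the case-matching in the last two steps: we must check that the restriction $t\in[0,d]$ makes the piece indices of Theorem \ref{pro_wasser_ball_general} collapse to the displayed systems, with no redundant or missing constraints for $x<d$ versus $x>d$. I would verify this by comparing the primal functions pointwise rather than by index bookkeeping.
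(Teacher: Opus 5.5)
Your proposal is correct and follows essentially the same route as the paper. Like the paper, you reduce via Proposition~\ref{switch_wasser} with $t$ restricted to $[0,d]$ and split $\gamma$ at $(1+\theta)/\nu$. You then write $\tilde\ell_\gamma$ as the maximum of three affine pieces when $\gamma\nu\le 1+\theta$, or of the affine piece $t+\gamma(x-t)$ and one concave piece when $\gamma\nu>1+\theta$, compute the conjugates, and apply Theorem 8 of \cite{KENS19}.

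Two small slips, neither load-bearing since you rederive directly from \cite{KENS19}. First, for $t\in[0,d)$ one has $d-t\in(h_1,h_2]$, so this is the case $k=2$ of Theorem~\ref{pro_wasser_ball_general}, not $k=1$. Second, for $t<0$ the slope is $1-\gamma\nu\le 0$, with equality at $\gamma=1/\nu$, rather than strictly negative.
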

    Since any convex loss function can be approximated arbitrarily well by a piecewise-linear convex function, Theorem~\ref{pro_wasser_ball_general} yields a natural finite-dimensional approximation of the Wasserstein-robust DROR problem for ROCE risk measures generated by general convex losses. Specifically, we replace the original loss by a piecewise-linear approximation and then apply Theorem~\ref{pro_wasser_ball_general} to obtain a tractable finite-dimensional reformulation. In particular, this construction provides finite-dimensional approximations for the Wasserstein-robust DROR problem under the risk measures in Example~\ref{ex2}, for which the generating loss functions are not piecewise linear.

\subsection{Extension to High Dimensions}
In practice, a reinsurer typically faces multiple risks, for example through a  portfolio of business lines. The underlying loss is then inherently multivariate $\boldsymbol{X} \in \mathbb{R}_{+}^m$. It is therefore natural to consider a stop-loss contract written on the aggregate loss $Z=\boldsymbol{\alpha}^{\top} \boldsymbol{X}$, where $\boldsymbol{\alpha} \in \mathbb{R}_{+}^m$ denotes the prespecified weights, and to evaluate the retained loss $Z \wedge d+(1+\theta) \mathbb{E}\left[(Z-d)_{+}\right]$ with an ROCE risk measure in the DROR problem.
Specifically, for multivariate loss $\boldsymbol{X} \in \mathbb{R}_{+}^m$, we consider the following high-dimensional DROR problem:
\begin{align}\label{eq:high_wasser}
	\min_{d\in\R_+} \sup_{F \in \mathcal{B}_\epsilon(F_0)} \rho^F\left(\left(\boldsymbol{\alpha}^\top\boldsymbol{X}\right)\wedge d +(1+\theta)\E^F\left[\left(\boldsymbol{\alpha}^\top \boldsymbol{X}-d\right)_+\right]\right),
\end{align}
where $\rho:\mathcal{X}\to \R$ is an ROCE risk measure, $F_0$ is a given distribution, $\boldsymbol{\alpha} \in \mathbb{R}_{+}^m$, $F$ is the distribution of $\boldsymbol{X}$, and $\mathcal{B}_\epsilon(F_0)$ is the uncertainty set, defined as
$$
   \mathcal{B}_{\varepsilon}(F_0)=\left\{F \in \mathcal{P}\left(\mathbb{R}_+^m\right) : W_p(F, F_0) \leqslant \varepsilon\right\}.
$$
Applying Proposition 1 of \cite{WLM25}, one can verify that the inner problem of  \eqref{eq:high_wasser} can be written as 
\begin{align*}
	\sup_{F \in \mathcal{B}_{\|\boldsymbol{\alpha}\|_*\varepsilon}(F_0^{\boldsymbol{\alpha}})} \rho^F\left(Z\wedge d +(1+\theta)\E^F[\left(Z-d\right)_+]\right),
\end{align*}
where $Z=\boldsymbol{\alpha}^\top \boldsymbol{X}$, $F_0^{\boldsymbol{\alpha}}$ denotes the distribution of $Z$ induced by $F_0$ (i.e., the pushforward of $F_0$ under $\boldsymbol{x}\mapsto \boldsymbol{\alpha}^\top \boldsymbol{x}$), and $F$ denotes the distribution of $Z$. Here, $\|\cdot\|_*$ is the dual norm of $\|\cdot\|$, defined by $\|\boldsymbol{\alpha}\|_{*}=\sup_{\|\boldsymbol{z}\|\le 1}\boldsymbol{\alpha}^{\top}\boldsymbol{z}$. Thus, by Proposition \ref{switch_wasser}, we obtain the following reformulation. 
\begin{corollary}
    Let $ \rho:\mathcal{X}\to\R$ be an ROCE risk measure generated by a set $\Gamma$ of increasing and convex functions. Given a distribution $F_0$, fix $\epsilon> 0$, $\theta\in\R_+$ and $\boldsymbol{\alpha}\in\mathbb{R}_+^m$. For $d\ge 0$, we have
\begin{align*}
     \sup_{F \in \mathcal{B}_\epsilon(F_0)} \rho^F\left(\left(\boldsymbol{\alpha}^\top \boldsymbol{X}\right)\wedge d+(1+\theta)\E^F\left[\left(\boldsymbol{\alpha}^\top \boldsymbol{X}-d\right)_+\right]\right) =  \sup_{\ell \in \Gamma} \inf_{t \in \R}\, \sup_{F \in \mathcal{B}_{\|\boldsymbol{\alpha}\|_*\epsilon}(F_0^{\boldsymbol{\alpha}})} \E^F\left[\tilde{\ell}\left(Z,d,t\right)\right],
\end{align*}
where $Z=\boldsymbol{\alpha}^\top \boldsymbol{X}$ and $\tilde{\ell}(x,d,t)$ is defined in \eqref{c_ga_func}.
\end{corollary}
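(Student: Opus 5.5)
The corollary combines two earlier facts: the projection reduction (Proposition 1 of \cite{WLM25}), quoted just before the statement, and Proposition \ref{switch_wasser}. The plan is to chain them.

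\textbf{Step 1 (projection).} Write the objective as a functional of the law of $Z=\boldsymbol{\alpha}^\top\boldsymbol{X}$ alone. This is possible because $\rho$ is law invariant: it is defined through expectations under $F$. Also $\E^F[(Z-d)_+]$ depends only on the law of $Z$. So the supremum over $F\in\mathcal B_\epsilon(F_0)$ equals the supremum of the same functional over the set of pushforwards $\{F\circ(\boldsymbol{\alpha}^\top)^{-1}: F\in\mathcal B_\epsilon(F_0)\}$.

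\textbf{Step 2 (identify the pushforward set).} Show that this set equals $\mathcal B_{\|\boldsymbol{\alpha}\|_*\epsilon}(F_0^{\boldsymbol{\alpha}})$ (as a ball in $\mathcal P(\R_+)$), or at least that the two suprema coincide; this is what Proposition 1 of \cite{WLM25} supplies.

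The inclusion $\subseteq$ is routine. Take an optimal coupling $\pi$ of $F$ and $F_0$. Since $|\boldsymbol{\alpha}^\top(\xi_1-\xi_2)|\le\|\boldsymbol{\alpha}\|_*\|\xi_1-\xi_2\|$, the pushforward of $\pi$ under $(\xi_1,\xi_2)\mapsto(\boldsymbol{\alpha}^\top\xi_1,\boldsymbol{\alpha}^\top\xi_2)$ couples the two projected laws at cost at most $\|\boldsymbol{\alpha}\|_*^p\epsilon^p$.

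The reverse inclusion needs a lift. Given $G$ within $\|\boldsymbol{\alpha}\|_*\epsilon$ of $F_0^{\boldsymbol{\alpha}}$, couple $z_0=\boldsymbol{\alpha}^\top\boldsymbol{x}_0$ with $z$. Choose $\boldsymbol{u}$ attaining $\boldsymbol{\alpha}^\top\boldsymbol{u}=\|\boldsymbol{\alpha}\|_*$ with $\|\boldsymbol{u}\|=1$, and move $\boldsymbol{x}_0$ to $\boldsymbol{x}_0+\frac{z-z_0}{\|\boldsymbol{\alpha}\|_*}\boldsymbol{u}$. The transport cost is then exactly $|z-z_0|/\|\boldsymbol{\alpha}\|_*$, as required.

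\textbf{Main obstacle.} The lift in Step 2 must keep the support in $\R_+^m$. When $z<z_0$ the shift is negative, so one needs $\boldsymbol{u}\ge 0$ together with a truncation argument. Since $\boldsymbol{\alpha}\ge0$, a maximizer $\boldsymbol{u}$ can be taken nonnegative for monotone norms. A downward shift can exit the orthant, but one can instead decrease coordinates proportionally, using $z\ge 0$. This is exactly the content being cited from \cite{WLM25}, so I would invoke it rather than reprove it.

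\textbf{Step 3 (apply Proposition \ref{switch_wasser}).} Apply Proposition \ref{switch_wasser} to the univariate problem with center $F_0^{\boldsymbol{\alpha}}$ and radius $\|\boldsymbol{\alpha}\|_*\epsilon$. Its proof relies only on ROCE structure and a minimax interchange over a convex set of distributions, and does not depend on the specific center. It therefore yields the stated $\sup_{\ell}\inf_t\sup_F\E^F[\tilde\ell(Z,d,t)]$ form, which completes the proof.
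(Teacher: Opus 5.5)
Your chain is the paper's own: project the ball by Proposition 1 of \cite{WLM25}, then apply Proposition \ref{switch_wasser} to the univariate problem with center $F_0^{\boldsymbol{\alpha}}$ and radius $\|\boldsymbol{\alpha}\|_*\epsilon$. Steps 1 and 3 are fine. The gap is in Step 2, exactly at the obstacle you flagged, and deferring to the citation does not close it.

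Your proportional-decrease patch fails. Moving $\boldsymbol{x}_0$ to $(z/z_0)\boldsymbol{x}_0$ costs $(1-z/z_0)\|\boldsymbol{x}_0\|$. This is at most $(z_0-z)/\|\boldsymbol{\alpha}\|_*$ only when $\boldsymbol{\alpha}^\top\boldsymbol{x}_0=\|\boldsymbol{\alpha}\|_*\|\boldsymbol{x}_0\|$. In fact the set identity is false on $\R_+^m$. Take the Euclidean norm, $\boldsymbol{\alpha}=(1,1)$, $F_0=\delta_{(1,0)}$ and $\epsilon=1/\sqrt{2}$. The law $\delta_0$ lies within distance $1=\|\boldsymbol{\alpha}\|_*\epsilon$ of $F_0^{\boldsymbol{\alpha}}=\delta_1$. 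But its only lift in $\mathcal P(\R_+^2)$ is $\delta_{(0,0)}$, at distance $1>\epsilon$ from $F_0$. The exact-cost lift is an unconstrained-support fact, so no citation can supply the identity here. The paper's ``one can verify'' passes over the same point.

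What is true, and all the corollary needs, is equality of the two suprema. The missing idea is monotonicity: $\rho^G(Z\wedge d)+(1+\theta)\E^G[(Z-d)_+]$ does not decrease when $Z$ is increased pointwise. Given $G$ in the univariate ball, realise an optimal coupling $(Z_0,Z)$ jointly with $\boldsymbol{X}_0\sim F_0$, $Z_0=\boldsymbol{\alpha}^\top\boldsymbol{X}_0$. Replace $Z$ by $Z\vee Z_0$. Since $|Z\vee Z_0-Z_0|\le|Z-Z_0|$, the transport cost does not increase, and the objective does not decrease. So downward moves are never needed.

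Upward moves lift at exact cost via $\boldsymbol{X}_0+(Z-Z_0)_+\boldsymbol{u}/\|\boldsymbol{\alpha}\|_*$. This stays in $\R_+^m$ provided the dual maximiser $\boldsymbol{u}$ is nonnegative. That proviso is genuine. It holds for absolute norms such as $\ell_r$ (replace $\boldsymbol{u}$ by $|\boldsymbol{u}|$). For a general norm, both it and the corollary can fail. For $\|\boldsymbol{z}\|=\max\{|z_1+z_2|,|z_2|\}$ and $\boldsymbol{\alpha}=(1,0)$, one has $\|\boldsymbol{\alpha}\|_*=2$, attained only at $\boldsymbol{u}=(2,-1)$. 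With $F_0=\delta_{(1,0)}$ and $d=0$, the left side equals $\rho(0)+(1+\theta)(1+\epsilon)$, since $\|\boldsymbol{x}-(1,0)\|\ge x_1-1$ on $\R_+^2$. The right side equals $\rho(0)+(1+\theta)(1+2\epsilon)$.

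So the statement needs a monotone-norm hypothesis (or unconstrained support). Your Step 2 should be replaced by the monotonicity argument above rather than by a set identity.
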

Moreover, if $F_0$ is the empirical distribution and the loss function is the piecewise-linear convex function, it follows from Theorem~\ref{pro_wasser_ball_general} that the inner problem in \eqref{eq:high_wasser} admits a tractable reformulation. Specifically, for high-dimensional samples $\{\widehat{\boldsymbol{x}}_i\}_{i=1}^N\subset\mathbb{R}_+^m$, define the projected one-dimensional samples $\widehat{z}_i:=\boldsymbol{\alpha}^\top \widehat{\boldsymbol{x}}_i$. Then the tractable reformulation in Theorem~\ref{pro_wasser_ball_general} applies after replacing $\widehat{x}_i$ by $\widehat{s}_i$ and replacing the Wasserstein radius $\varepsilon$ by $\|\boldsymbol{\alpha}\|_*\varepsilon$. For brevity, we omit the resulting corollary. 

\section{Numerical examples}\label{sect-4} 
In this section, we present numerical experiments to investigate problem \eqref{00} under the mean-variance uncertainty set and Wasserstein ball when  the  risk measure $\rho$ is mean-CVaR: $\rho(X)=\eta_1\E[X]+(1-\eta_1)\CVaR_\xi(X)$, where $0\le\eta_1<1<\eta_2$ and $\xi=(\eta_2-1)/(\eta_2-\eta_1)$. First, we compare our distributionally robust risk measurement under mean-variance uncertainty set with the risk measurement obtained in the classical reinsurance model in which the distribution is assumed to follow a Gamma, Lognormal or Pareto distribution. 
Next, we study the sensitivity of the Wasserstein-robust design to the radius of the Wasserstein ball. Finally, for a fixed dataset, we compare the out-of-sample performance of the two robust models and the sample average approximation (SAA) benchmark. 
For the mean-variance ambiguity set, we benchmark our mean-variance distributionally robust reinsurance model against classical parametric specifications by comparing the resulting risk measure curves and their optimal deductibles. The classical models assume that the loss follows one of the standard actuarial families-Gamma, Lognormal, or Pareto-calibrated to the same mean and variance.
\begin{figure}[ht]
  \centering
  \includegraphics[width=0.8\textwidth]{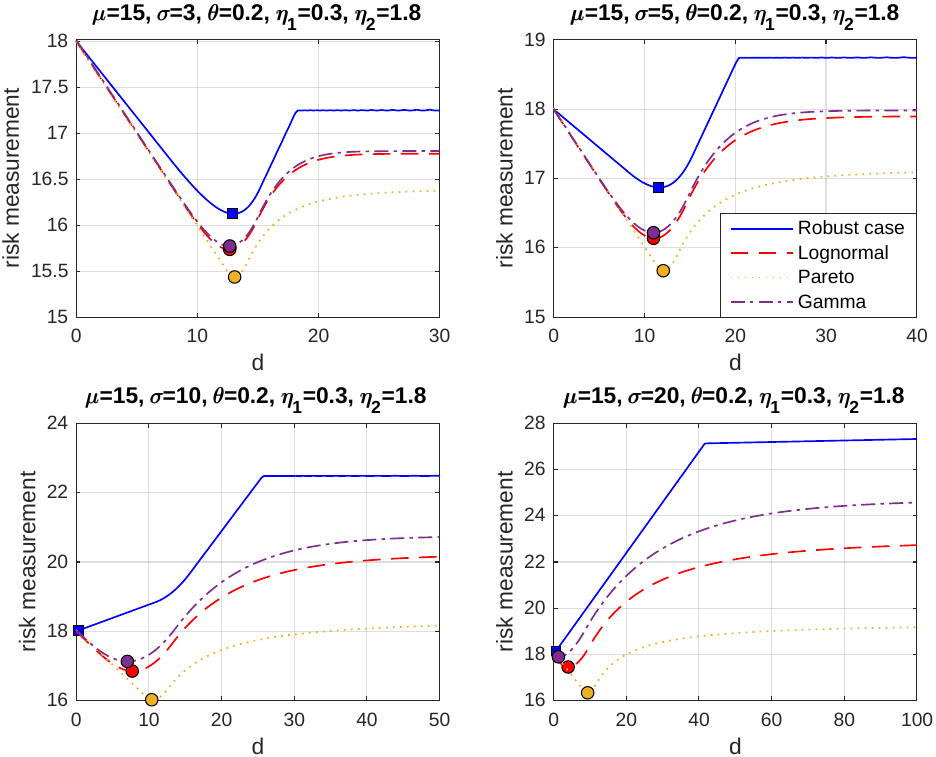}
  \caption{Comparison of risk measurement values in the robust (solid) and non‐robust cases (Lognormal: dashed; Pareto: dotted; Gamma: dash-dot).}
  \label{fig:risk-comparison}
\end{figure}
Figure \ref{fig:risk-comparison} provides the risk measurement across proposed robust model and classical parametric benchmarks. 
Each subplot corresponds to a different level of standard deviation, $\sigma = 3, 5, 10, 20$, with $\mu = 15$, $\theta = 0.2$, and risk parameters $\eta_1 = 0.3$, $\eta_2 = 1.8$ fixed. The robust risk measurement (blue curve) consistently lies above those of the reference distributions, confirming its conservative stance under uncertainty. The optimal deductibles for each model are indicated by their respective markers, with the robust optimum shown as a square. Notably, for smaller standard deviation ($\sigma=3,5$), the optimal deductible of the robust model 
is neither the smallest nor the largest optimal deductible among the parametric benchmarks, and the resulting contract reflects a trade-off between risk transfer and premium loading. 
As $\sigma$ increases, the robust risk curve shifts upward and the minimizer moves toward smaller deductibles. For sufficiently large $\sigma$, the robust solution becomes the smallest optimal deductible relative to the parametric benchmarks. Moreover, for $\sigma=10,\,20$, the robust optimal deductible reaches the boundary $d^*=0$, corresponding to full reinsurance. 

 For the Wasserstein ball, the radius $\varepsilon$ quantifies the size of distributional ambiguity: a larger $\varepsilon$ allows a wider range of models around the center distribution, reflecting less confidence in the center distribution. Thus, we next study the sensitivity of the type-2 Wasserstein distributionally robust reinsurance solution to $\varepsilon$. For fixed safety loading parameter $\theta = 0.2$, and risk parameters $\eta_1 = 0.3$, $\eta_2 = 1.8$, Figure \ref{fig:radius-sensitivity} plots the worst-case risk as a function of the deductible for several Wasserstein radii, under two different empirical center distributions generated from Lognormal and Pareto samples with the same mean and standard deviation ($\mu=15$, $\sigma=5$), respectively.  As $\varepsilon$ increases, the entire objective curve shifts upward and its minimizer shifts toward larger deductibles. Consequently, both the robust optimal value and the optimal deductible $d^*(\varepsilon)$ increase with $\varepsilon$. 
 \begin{figure}[htbp]
  \centering
  \begin{subfigure}[b]{0.48\textwidth}
    \centering
    \includegraphics[width=\textwidth]{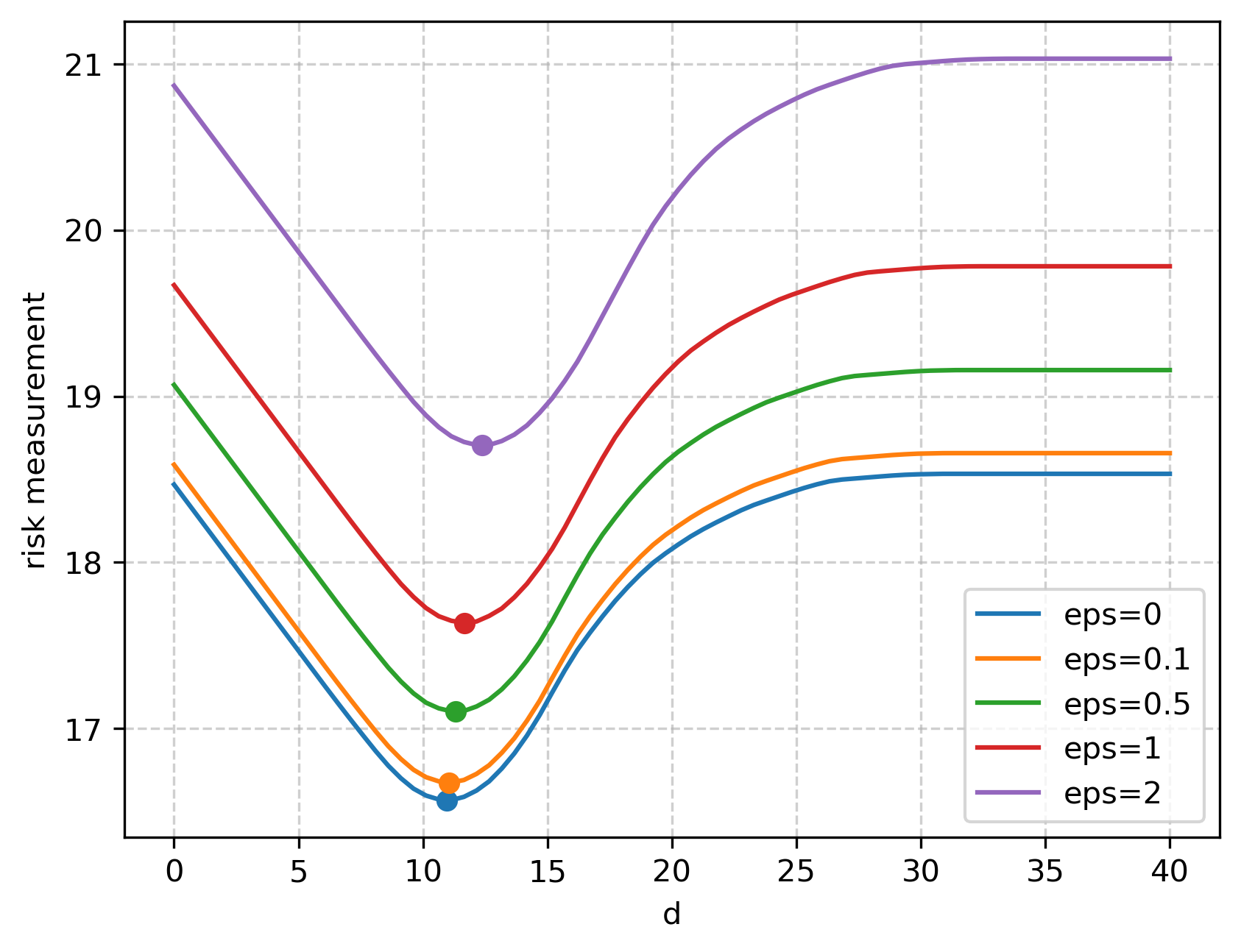}
    \caption{Lognormal center distribution}
    \label{fig:radius-sensitivity-lognormal}
  \end{subfigure}
  \hfill
  \begin{subfigure}[b]{0.48\textwidth}
    \centering
    \includegraphics[width=\textwidth]{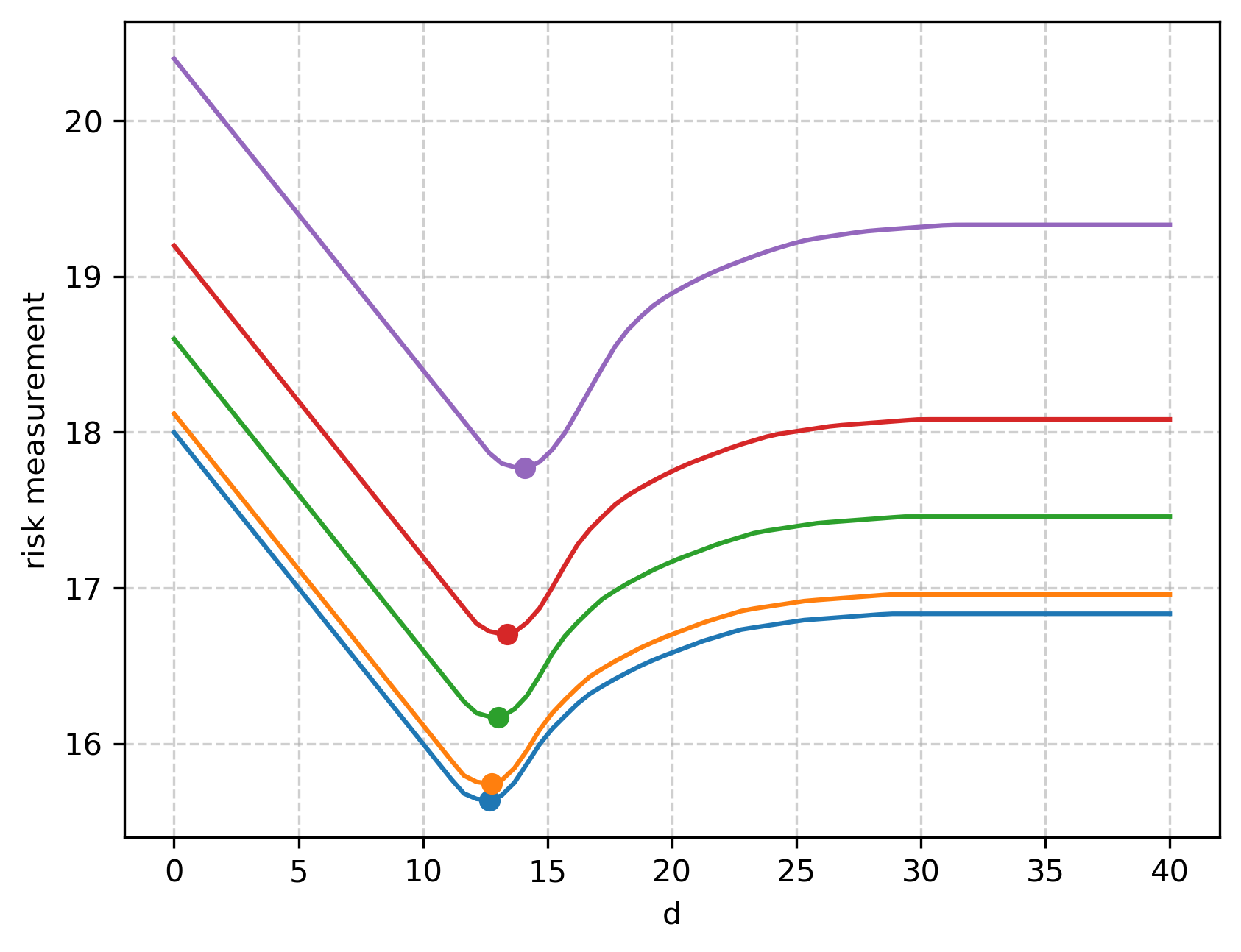}
    \caption{Pareto center distribution}
    \label{fig:radius-sensitivity-pareto}
  \end{subfigure}

  \caption{Worst-case risk as a function of the deductible for different Wasserstein radii, using empirical center distributions generated from a lognormal sample (left) and a Pareto sample (right).}
  \label{fig:radius-sensitivity}
\end{figure}
 
 To end this section, we compare the out-of-sample performance of the mean-variance robust model, the Wasserstein robust model, and the SAA model under model misspecification. A training sample $\left\{\widehat{x}_i\right\}_{i=1}^N$ of size $N=20$ is generated from a Lognormal distribution with prespecified mean and standard deviation $(\mu, \sigma)=(2,0.5)$. From this sample, we construct the empirical distribution, which serves as the center distribution of the type-2 Wasserstein ball, and we use the empirical moments $(\widehat{\mu}, \widehat{\sigma})$ to define the mean-variance uncertainty set. To assess robustness to tail misspecification, the out-of-sample risk $R(d)$ is then evaluated on an independent sample $\left\{\widehat{x}_i'\right\}_{i=1}^N$ drawn from a Pareto distribution with the same mean and standard deviation as the above Lognormal distribution. For each radius $\varepsilon$, let $d_{W_2}^*(\varepsilon)$ denote the optimal deductible obtained from the Wasserstein robust model, and let $d_{\text {MV}}^*$ and $d_{\text {SAA}}^*$ be the optimal deductible from the mean-variance robust model and the SAA model, respectively. Figure \ref{fig:true-risk-misspec} plots the corresponding out-of-sample risks $R\left(d_{W_2}^*(\varepsilon)\right)$ (solid line), $R\left(d_{\text {MV}}^*\right)$ (dashed horizontal line) and $R\left(d_{\text {SAA}}^*\right)$ (dash-dot horizontal line). 
 The mean-variance robust model yields a uniformly smaller out-of-sample risk $R(d^*_{\mathrm{MV}})$ than the  out-of-sample risk $R(d^*_{\mathrm{SAA}})$ of SAA model, indicating a clear improvement over SAA. On the other hand, the Wasserstein-robust performance $R(d^*_{W_2}(\varepsilon))$ depends critically on the radius $\varepsilon$: starting at $\varepsilon=0$ (which coincides with SAA), the out-of-sample risk initially decreases and attains its minimum at an intermediate radius (around $\varepsilon\approx 0.4$), outperforming both mean-variance robust model and SAA. As $\varepsilon$ increases further, $R(d^*_{W_2}(\varepsilon))$ rises steadily, eventually exceeding first the mean-variance benchmark and then the SAA benchmark, and levels off for large radii. 

\begin{figure}[ht]
  \centering
  \includegraphics[width=0.8\textwidth]{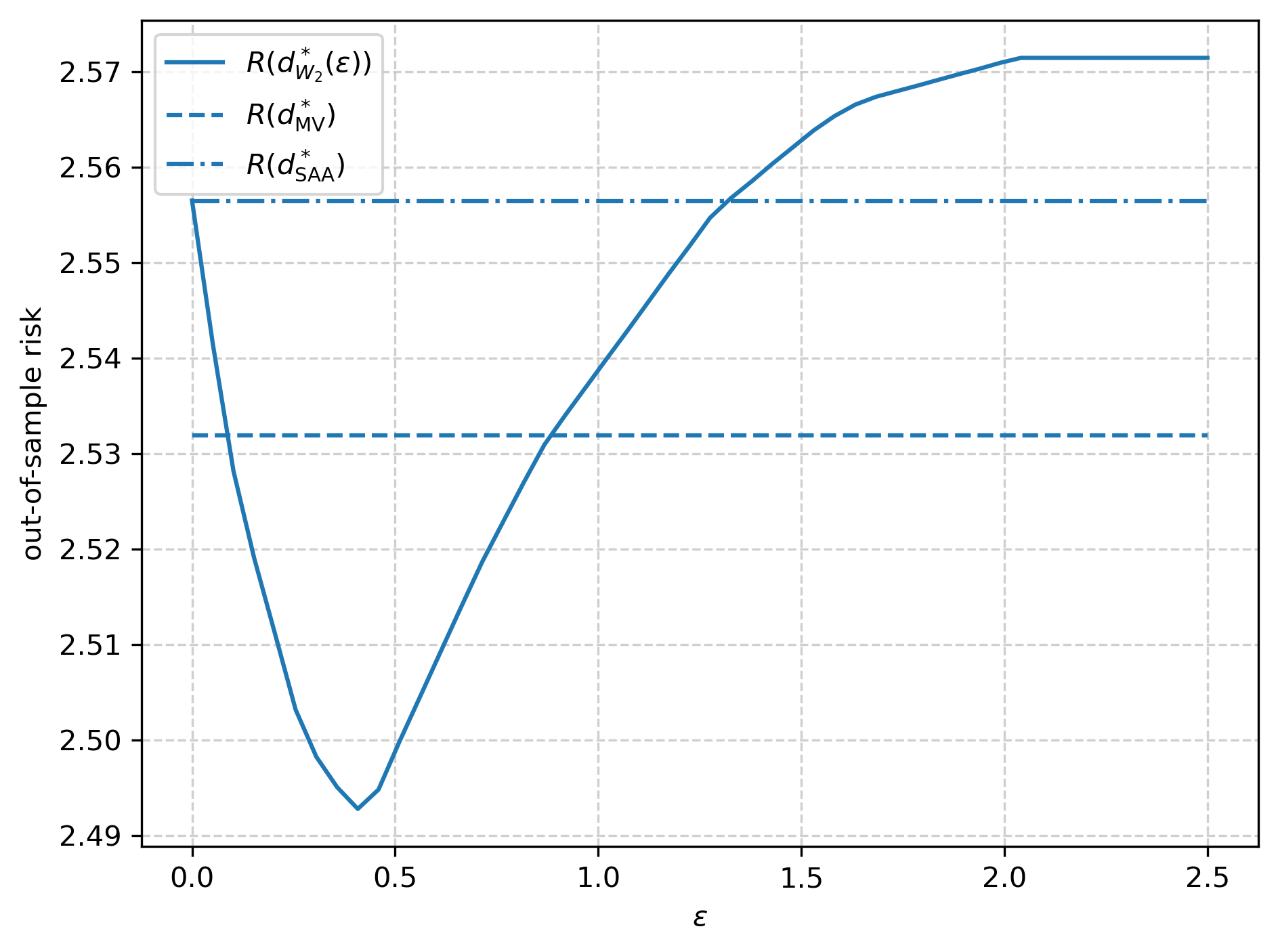}
  \caption{Out-of-sample performance of two robust models and SAA model}
  \label{fig:true-risk-misspec}
\end{figure}

\section*{Funding details}

This work was supported by 
the National Natural Science Foundation of China under Grant 12371476. 

\section*{Disclosure statement}
Declarations of interest: none

%
%
\appendix
\section*{Appendix}\label{app:proofs}

\subsection*{EC.1 Proof of Section \ref{main-sect}}
\subsection*{EC.1.1 Proof of Section \ref{main-mean-variance}}
To show Theorem \ref{thm-rho_p}, we need Sion's minimax theorem (see \cite{S58}). 

\begin{lemma}\label{le1}
(Sion's Minimax Theorem) Let $M$ be a compact convex set, $N$ be a convex set and $f$ be a real-valued function on $M \times N$ such that $f(x, \cdot)$ is upper semi-continuous and quasi-concave on $N$ for each $x \in M$ and $f(\cdot, y)$ is lower semi-continuous and quasi-convex on $M$ for each $y \in N$.\footnote{For a real-valued function $f$ on a convex set $M$, we say $f$ is quasi-convex on $M$ if $f(\lambda x+(1-\lambda) y) \leq \max \{f(x), f(y)\}$ for any $ x,\, y \in M$ and $\lambda \in[0,1]$; and $f$ is quasi-concave on a convex set $M$ if $f(\lambda x+(1-\lambda) y) \ge \min \{f(x), f(y)\}$ for any $ x,\, y \in M$ and $\lambda \in[0,1]$.} Then we have
$$
\inf _{x \in M} \sup _{y \in N} f(x, y)=\sup _{y \in N} \inf _{x \in M} f(x, y).
$$
\end{lemma}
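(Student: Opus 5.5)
The inequality $\sup_{y\in N}\inf_{x\in M} f(x,y)\le \inf_{x\in M}\sup_{y\in N} f(x,y)$ is immediate, so the plan is to prove the reverse one by contradiction, following Komiya's elementary route. Suppose there is $c\in\R$ with
\[
\sup_{y\in N}\inf_{x\in M} f(x,y)<c<\inf_{x\in M}\sup_{y\in N} f(x,y).
\]
For each $y\in N$ put $L_y(c):=\{x\in M: f(x,y)\le c\}$. Since $f(\cdot,y)$ is lower semi-continuous and quasi-convex, $L_y(c)$ is a closed convex subset of the compact set $M$. The right-hand inequality says that $\bigcap_{y\in N}L_y(c)=\emptyset$. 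By compactness (the finite intersection property) there are finitely many $y_1,\dots,y_n\in N$ with $\bigcap_{i\le n}L_{y_i}(c)=\emptyset$, that is,
\[
\min_{x\in M}\max_{i\le n} f(x,y_i)>c .
\]
Here the minimum is attained because a maximum of finitely many lower semi-continuous functions is lower semi-continuous on a compact set.

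The heart of the argument is a two-point lemma. \emph{If $y_1,y_2\in N$ and $\min_{x}\max\{f(x,y_1),f(x,y_2)\}>c$, then there is $y_0\in[y_1,y_2]$ with $\min_x f(x,y_0)>c$.} I would prove it by contradiction. Pick $c'$ with $c<c'<\min_x\max\{f(x,y_1),f(x,y_2)\}$, and suppose every $z\in[y_1,y_2]$ has $L_z(c)\ne\emptyset$, hence $L_z(c')\ne\emptyset$.

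First, quasi-concavity in $y$ gives $f(x,z)\ge\min\{f(x,y_1),f(x,y_2)\}$. Therefore $L_z(c')\subseteq L_{y_1}(c')\cup L_{y_2}(c')$. These two sets are disjoint, because their intersection is empty by the choice of $c'$. Since $L_z(c')$ is convex, hence connected, it lies entirely in one of them.

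Next, define
\[
I=\{z\in[y_1,y_2]: L_z(c)\subseteq L_{y_1}(c')\},\qquad J=\{z\in[y_1,y_2]: L_z(c)\subseteq L_{y_2}(c')\}.
\]
These sets partition the segment, with $y_1\in I$ and $y_2\in J$. To show $I$ is closed, take $z_k\to z$ with $z_k\in I$ and pick $x\in L_z(c)$. Then $f(x,z)\le c<c'$, and upper semi-continuity in $y$ gives $f(x,z_k)<c'$ for large $k$. So $x\in L_{z_k}(c')\subseteq L_{y_1}(c')$, where the inclusion holds because $L_{z_k}(c')$ contains $L_{z_k}(c)\subseteq L_{y_1}(c')$ and $L_{z_k}(c')$ lies in one of the two disjoint pieces. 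Hence $z\in I$. By symmetry $J$ is also closed.

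This contradicts the connectedness of the segment, which proves the lemma. I expect this step, and in particular handling the two levels $c<c'$ so that the closedness argument works, to be the main technical obstacle.

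Finally, I would extend the lemma to $n$ points by induction on $n$. Assume the statement for $n-1$ points, and suppose $\min_x\max_{i\le n}f(x,y_i)>c$. Let $M'=L_{y_n}(c)$, which is compact and convex. If $M'=\emptyset$ we are done. Otherwise, on $M'$ we have $\min_{x\in M'}\max_{i\le n-1} f(x,y_i)>c$, and the induction hypothesis applied on $M'$ gives $y'\in N$ with $\min_{x\in M'}f(x,y')>c$. This yields $\min_{x\in M}\max\{f(x,y'),f(x,y_n)\}>c$. The two-point lemma then produces $y_0\in N$ with $\min_{x\in M}f(x,y_0)>c$. Therefore $\sup_y\inf_x f(x,y)>c$, contradicting the choice of $c$, and the two sides are equal.
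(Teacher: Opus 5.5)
Your argument is correct. It is Komiya's (1988) elementary proof of Sion's theorem, and it handles the delicate points properly:
\begin{itemize}
\item the two levels $c<c'$, so that upper semicontinuity pushes $f(x,z_k)$ strictly below $c'$;
\item the nonemptiness of $L_z(c)$, which makes $I$ and $J$ disjoint and lets you conclude $L_{z_k}(c')\subseteq L_{y_1}(c')$;
\item the induction carried out on the shrunken compact convex set $L_{y_n}(c)$.
\end{itemize}

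The paper gives no proof of this lemma at all. It imports it from Sion (1958), whose original argument rests on the Knaster--Kuratowski--Mazurkiewicz theorem. Citing Sion is the economical choice for the paper. Your route is self-contained and uses only compactness and the connectedness of segments.

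It also has a concrete payoff for this paper. Your proof uses upper semicontinuity in $y$ only along segments $\lambda\mapsto(1-\lambda)y_1+\lambda y_2$, $\lambda\in[0,1]$. The paper applies the lemma with $M=[t_{\ell,1},t_{\ell,2}]$ and $N=S(\mu,\sigma)$ or an empirical-centred Wasserstein ball, and it never specifies a topology on these sets of distributions. However, $F\mapsto \E^F[\tilde{\ell}(X,d,t)]$ is affine and finite on them, since $\tilde{\ell}(\cdot,d,t)$ grows at most linearly. It is therefore continuous along every segment, so your version of the theorem applies directly without having to choose a topology on $N$.
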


\begin{proof}[Proof of Theorem \ref{thm-rho_p}]
By the definition of $\rho$, we can rewrite \eqref{eq:main-inner} as
\begin{align}\label{eq:rho_p-1}
     &~\sup_{\ell \in \Gamma} \sup_{F \in S(\mu, \sigma)}  \inf_{t \in \R} \left\{ t + \E^F\left[\ell\left(X\wedge d-t\right)\right]+(1+\theta)\E^F\left[(X-d)_+\right]\right\} \nonumber \\
     &:= \sup_{\ell \in \Gamma} \sup_{F \in S(\mu, \sigma)}  \inf_{t \in \R} \E^F\left[\tilde{\ell}(X,d,t)\right].
\end{align}
Denote  $g_{\ell}(t,F):= \E^F[\tilde{\ell}(X,d,t)]$. One can verify that $g_{\ell}(t,F)$ is linear in $F$ and convex in $t$.
Fix $\ell\in\Gamma$ and $F\in S(\mu, \sigma)$. Define $H_\ell(t,F):=t +\E^F\left[\ell\left(X\wedge d-t\right)\right]$ for $t\in\R$. Since $\ell$ is convex and satisfies $1\in\mbox{int}\partial \ell(\R)$, for each $\ell\in\Gamma$ there exist $y_1(\ell),\,y_2(\ell)\in\R$ with $y_1(\ell)\le y_2(\ell)$ such that $\ell'(y)<1$ for $y<y_1(\ell)$ and $\ell'(y)>1$ for $y>y_2(\ell)$.
Note that $X\wedge d$ takes values in $[0,d]$. Consequently, for each $\ell\in\Gamma$ and $F \in S(\mu, \sigma)$,
$$
   \frac{\partial H_\ell(t,F)}{\partial t}<0,~t \in (-\infty,-y_2(\ell))~~{\rm and}~~\frac{\partial H_\ell(t,F)}{\partial t}>0,~t \in (d-y_1(\ell),\infty),
$$
where ${\partial H_\ell(t,F)}/{\partial t}$ represents the right derivative of $H_\ell(t,F)$ with respect to $t$.
Therefore, all the minimizers of the inner problem of Equation \eqref{eq:rho_p-1} must lie within the  interval $[-y_2(\ell),\,d-y_1(\ell)]=:[t_{\ell,1},\,t_{\ell,2}]$. 
It is worth noting that $t_{\ell,1}$ and $t_{\ell,2}$ are independent of $F$.
Hence, we can rewrite problem \eqref{eq:rho_p-1} as
$$
   \sup_{\ell\in \Gamma}  \sup_{F \in S(\mu, \sigma)}\min_{t \in [t_{\ell,1},t_{\ell,2}]} g_{\ell}(t,F).
$$
By Lemma \ref{le1}, for any $\ell \in \Gamma$, we have 
\begin{align*}
    \sup_{F \in S(\mu, \sigma)}  \inf_{t\in\R} g_{\ell}(t,F)&=\sup_{F \in S(\mu, \sigma)}\min_{t \in [t_{\ell,1},t_{\ell,2}]} g_{\ell}(t,F)\\
    &= \min_{t \in [t_{\ell,1},t_{\ell,2}]} \sup_{F \in S(\mu, \sigma)} g_{\ell}(t,F) \ge \inf_{t \in \R} \sup_{F \in S(\mu, \sigma)} g_{\ell}(t,F),
\end{align*}
The reverse direction  of the above inequality is trivial. Hence, 
 \begin{align*}
        \sup_{F \in S(\mu, \sigma)}\inf_{t \in \R} g_{\ell}(t,F)
        &=   \inf_{t \in \R} \sup_{F \in S(\mu, \sigma)} g_{\ell}(t,F).
\end{align*}
Therefore, problem \eqref{eq:rho_p-1} is equivalent to
\begin{align}\label{eq:rho_p-3}
        \sup_{\ell \in \Gamma} \inf_{t \in \R} \sup_{F \in S(\mu, \sigma)}  g_{\ell}(t,F)=\sup_{\ell \in \Gamma} \inf_{t \in \R} \sup_{F \in S(\mu, \sigma)} \E^F[\tilde{\ell}(X,d,t)].
\end{align}
Note that $\E^F[\tilde{\ell}(X,d,t)]$ is an expected loss. By Proposition 6.39 in \cite{SDR21}, the worst-case distribution of Problem \eqref{eq:rho_p-3} can be taken to be a three-point distribution set $S_3(\mu, \sigma)$, that is,
\begin{align*}
     \sup_{F \in S(\mu, \sigma)} \E^F[\tilde{\ell}(X,d,t)] = \sup_{F\in S_3(\mu, \sigma)} \E^F[\tilde{\ell}(X,d,t)].
\end{align*}
Substituting this identity into \eqref{eq:rho_p-3} yields \eqref{eq:rho_p}. 
\end{proof}
\begin{proof}[Proof of Proposition \ref{pro1}]
    It suffices to show that
\begin{align}\label{ieq:pro1}
    \sup_{F \in S(\mu, \sigma)} \rho^F(X \wedge d)+(1+\theta)\E^F\left[(X-d)_+\right] \ge \sup_{F \in \overline{S}(\mu, \sigma)} \rho^F(X \wedge d)+(1+\theta)\E^F\left[(X-d)_+\right].
\end{align}
To this end, take $F \in \overline{S}(\mu, \sigma)$ and $X\sim F$.  Without loss of generality, assume ${\rm Var}^F(X)<\sigma^2$. To prove \eqref{ieq:pro1}, we consider the following two cases.

\emph{Case 1}. \ Suppose $\p(X>d)=0$. We find that there exists $x_0\in(0,d]$, such that $\p(x_0< X\le d)>0$; moreover, $\rho^F(X \wedge d)+(1+\theta)\E^F[(X-d)_+]=\rho^F(X)$. Let $U$ be a uniform random variable on $(0,1)$ such that $F^{-1}(U)=X$ almost surely (a.s.). The existence of such uniform random variable $U$ for any $X$ is guaranteed, for instance, by Lemma A.32 of \cite{FS16}. For $\epsilon>0$ and $\delta\in (0,1-F(x_0)]$, define
     $$
        X_{\epsilon,\delta} = \begin{cases}  0, & \textrm{if~} X \le x_0,\\
                 -{\epsilon},  & \textrm{if~} X> x_0, ~U\le 1-\delta,\\
                     p\epsilon/\delta,  & \textrm{if~} U>1-\delta, \end{cases}
     $$
     where $p=\p(X> x_0, U\le 1-\delta)=1-\delta-F(x_0)$. Taking $X+X_{\epsilon,\delta}\sim F_{\epsilon,\delta}$, we have $X+X_{\epsilon,\delta}\ge X-\epsilon$ and $\E^{F_{\epsilon,\delta}} [X+X_{\epsilon,\delta}]=\mu$. When $\delta_0=1-F(x_0)$, we have $X_{\epsilon,\delta_0}=X$ and, thus, $\var^{F_{\epsilon,\delta_0}}(X+X_{\epsilon,\delta_0}) =\var^F(X) <\sigma^2$. For any $\epsilon>0$,  we observe that $\lim_{\delta \to 0} \var^{F_{\epsilon,\delta}} (X+X_{\epsilon,\delta})=+\infty$. By the continuity of $\var^{F_{\epsilon,\delta}}(X+X_{\epsilon,\delta})$ with respect to $\delta$, there exists $\delta_\epsilon >0$ such that
     $$
              X_\epsilon:=X+X_{\epsilon,\delta_\epsilon}\ge X-\epsilon~~{\rm and}~ ~\var^{F_{\epsilon}}(X_\epsilon)=\sigma^2,
     $$
     where $ F_{\epsilon}$ is the cdf of $X_\epsilon$. As $\rho^{F_\epsilon}(X_\epsilon \wedge d)\ge \rho^{F}((X-\epsilon) \wedge d)=\rho^{F}(X)-\epsilon$, we find that for any sufficiently small $\epsilon>0$, there exists $X_\epsilon\sim F_\epsilon$ with $F_\epsilon\in S(\mu, \sigma)$ such that
     $$
         \rho^{F_\epsilon}(X_\epsilon \wedge d)+(1+\theta)\E^{F_\epsilon}[(X_\epsilon-d)_+] \ge \rho^{F}(X)-\epsilon.
     $$
     Hence, for $X\sim F \in \overline{S}(\mu, \sigma)$ with $\p(X>d)=0$, we have
     $$
          \lim_{\epsilon\to 0}\rho^{F_\epsilon}(X_\epsilon \wedge d)+(1+\theta)\E^{F_\epsilon}[(X_\epsilon-d)_+] \ge \rho^{F}(X),
     $$
     where $F_{\epsilon}\in S(\mu, \sigma)$.

\emph{Case 2}.\ Suppose $q:=\p(X>d)>0$. Then, for $\xi\in[0,q)$, define
     $$
          \widetilde{X}_{\xi} = \begin{cases}  d,  &  \mbox{ with probability }\xi/q,\\
                 \frac{x_2q-d\xi}{q-\xi},  &  \mbox{ with probability } 1-\xi/q, \end{cases}
     $$
     where $x_2=\E[X|X> d]$. Let $\widetilde{F}_\xi$ be the cdf of $\widetilde{X}_{\xi}$, and $\widehat{F}$ be the cdf of $[X|X\le d]$. Now, we consider a random variable ${X}_{\xi}$ with the cdf ${F_\xi}:=(1-q)\widehat{F}+q\widetilde{F}_\xi$. Note that $\E^{F_\xi}[{X}_{\xi}]=\mu$, $\var^{F_0}[{X}_{0}]\le \var^F [X]<\sigma^2$ and $\lim_{\xi \to q}\var^{F_\xi}({X}_{\xi})=\infty$. By the continuity of $\var^{F_{\xi}}({X}_{\xi})$ with respect to $\xi$, there exists $\xi^\ast\in(0,q)$ such that $\var^{F_{\xi^\ast}}(X_{\xi^\ast})=\sigma^2$. Moreover, one can verify that $X_{\xi^\ast} \wedge d=X \wedge d$ and $\E^{F_{\xi^\ast}}[(X_{\xi^\ast}-d)_+]=\E^{F}[(X-d)_+]$. Hence, for $X\sim F\in \overline{S}(\mu, \sigma)$ with $\p(X>d)>0$, we have
     $$
         \rho^{F_{\xi^\ast}}(X_{\xi^\ast} \wedge d)+(1+\theta)\E^{F_{\xi^\ast}}[(X_{\xi^\ast}-d)_+]
                =\rho^{F}(X \wedge d)+(1+\theta)\E^{F}[(X-d)_+],
     $$
     where $F_{\xi^\ast}\in S(\mu, \sigma)$. 

Combining the above two cases, the desired \eqref{ieq:pro1} follows. This completes the proof.
\end{proof}
\begin{proof}[Proof of Proposition \ref{pro: convexset}]
    By Theorem \ref{thm-rho_p} and Proposition \ref{pro1}, the problem \eqref{eq:main} under the ROCE risk measure is equivalent to 
    \begin{align} \label{eq:rOCE}
   \min_{d\ge 0} \max_{\gamma\in\Gamma}\min_{t \in \R}\, &\max_{x_i\in\R_+,\,p_i\in\R_+}   \sum_{i=1}^3 p_i \tilde{\ell}(x_i,d,t),\\
     &~~~~~~~\text {\rm  s.t. } 
\sum_{i=1}^3 x_ip_i=\mu, \ \ \sum_{i=1}^3 p_i=1,\ \ \sum_{i=1}^3 x_i^2p_i\le\mu^2+\sigma^2, \notag
\end{align}
which is a tractable problem. Next, we focus on the inner maximization problem of \eqref{eq:rOCE}. 
For the inner maximization problem of \eqref{eq:rOCE},  
by taking $x_ip_i = y_i$ for $i=1,2, 3$, we can rewrite it as 
\begin{align*} 
\max_{y_i\in\R_+,\,p_i\in\R_+}   & \sum_{i=1}^3 p_i\tilde{\ell}\left(\frac{y_i}{p_i},d,t\right),\\
 \text {\rm s.t. }~~~   &
\sum_{i=1}^3 y_i=\mu, \ \ \sum_{i=1}^3 p_i=1,\ \ \sum_{i=1}^3 p_i\left(\frac{y_i}{p_i}\right)^2\le\mu^2+\sigma^2. 
\end{align*}
This completes the  proof.
\end{proof}
\subsection*{EC.1.2 Proof of Section \ref{main:sec_sample}}
\begin{proof}[Proof of Proposition \ref{exp+cvar}]
	By Theorem \ref{thm-rho_p}, problem \eqref{eq:main} with $\rho (X)=\eta_1\E[X]+(1-\eta_1)\CVaR_{\xi}(X)$ reduces to
	\begin{align}\label{eq:co3}
		\min_{d\ge 0}\inf_{t\in \R} \sup_{F \in S_3(\mu, \sigma)} \E^F[\tilde{\ell}(X,d,t)],
	\end{align}
	where $\tilde{\ell}(x,d,t):=t+ \eta_2(x\wedge d-t)_+-\eta_1 (t-x\wedge d)_+ +(1+\theta)(x-d)_+$.
	\begin{itemize}
		\item[(1)] If $\eta_2<(1+\theta)$, then $\tilde{\ell}(x,d,t)$ is decreasing in $d\in \R_+$, and thus the optimal deductible $d^*=\infty$.
		
		\item[(2)]  If $\eta_2\ge (1+\theta)$, using the proof of Theorem \ref{thm-rho_p}, we can further reduce problem \eqref{eq:main} to 
    \begin{align*}
        \min_{0 \le t \le d}  \sup_{F \in S_3(\mu, \sigma)}  \E^F[\tilde{\ell}(X,d,t)].
    \end{align*}
    Fix $d\ge 0$. For $t \in [0,d]$, one can verify that $\tilde{\ell}(x,d,t)$ is concave in $x\in [t,\infty)$. Applying Jensen's inequality, for any three-point distribution, we can find a two-point distribution such that the objective function $\E[\tilde{\ell}(X,d,t)]$ of the two-point distribution is larger than that of the original. Specifically, for any $F=[x_1,p_1;x_2,p_2;x_3,p_3]\in S_3(\mu, \sigma)$, without loss of generality, assume $x_1\le t \le x_2\le d\le x_3$ and define $F_\delta= [x_1-\delta (1-p_1), p_1; \overline{x}+ \delta p_1, 1-p_1 ]$, where $\overline{x} = (p_2 x_2+p_3x_3)/(p_2+p_3)$ and $\delta\ge 0$. One can verify that $\E^{F_\delta}[X]=\mu$, ${\rm Var}^{F_0}(X)\le \sigma^2$ and ${\rm Var}^{F_\delta}(X)$ is increasing and continuous in $\delta\ge 0$, and thus, there exists $\delta\ge 0$ such that ${\rm Var}^{F_\delta}(X)=\sigma^2$. Note that $\E^F[\tilde{\ell}(X,d,t)] \le \E^{F_\delta}[\tilde{\ell}(X,d,t)]$, and $\sup_{F \in S_2(\mu, \sigma)}\E^F[\tilde{\ell}(X,d,t)]$ is increasing in $\sigma$, where $S_{2}(\mu, \sigma):=\{F \in S(\mu, \sigma): F \mbox{ is a two-point cdf}\}$. Therefore, the optimal distribution must be a two-point distribution in $S(\mu,\sigma)$. 
        Standard manipulation yields the optimal deductible  $d^*=0$ if $\theta^*\le \sigma^2/\mu^2$ and $\mu-\sigma(1-\theta^*)/(2\sqrt{\theta^*})$ otherwise, where $\theta^*=(1+\theta-\eta_1)/(1-\eta_1)$.
	\end{itemize}
	Combining the above two cases,  we complete the proof.
\end{proof}
\begin{proof}[Proof of Proposition \ref{pro_e}]
    Let $\rho=e_\beta$ for $\beta \in [1/2,1]$ and set $\nu=\beta/(1-\beta)$. Then the problem \eqref{eq:main} can be reformulated as problem \eqref{eq:rOCE} with $\tilde{\ell}_\gamma(x,d,t)=t+ \gamma\nu(x\wedge d-t)_+-\gamma (t-x\wedge d)_+ +(1+\theta)(x-d)_+$, $\Gamma=[1/\nu,1]$, and $t\in[0,d]$. Define the inner maximization problem of \eqref{eq:rOCE} by
    \begin{align*}
        g(d,\gamma,t)=\left\{\begin{array}{lll}&\max_{x_i\in\R_+,\,p_i\in\R_+}   \sum_{i=1}^3 p_i \tilde{\ell}_\gamma(x_i,d,t) ,\\
     &~~~\text {\rm s.t. } 
\sum_{i=1}^3 x_ip_i=\mu, \ \ \sum_{i=1}^3 p_i=1,\ \ \sum_{i=1}^3 x_i^2p_i\le\mu^2+\sigma^2.\end{array}\right.
    \end{align*}
   We further simplify $g(d,\gamma,t)$ in the case $\gamma \nu>1+\theta$. Fix $d\ge0$,  $\tilde{\ell}_\gamma(x, d, t)$ is concave in $x \in[t,\infty)$. Similar to the proof of Proposition \ref{exp+cvar}, applying Jensen's inequality, for any three-point distribution, we can find a two-point distribution such that the objective function $\E[\tilde{\ell}_\gamma(X,d,t)]$ of the two-point distribution is larger than that of the original. Thus, the inner maximization problem of \eqref{eq:rOCE}, $g(d,\gamma, t)$, can be attained in a two-point distribution. Consequently, we also have  $$g(d,\gamma,t) = \max_{p\in [\sigma^2/(\mu^2+\sigma^2),1]}  p \tilde{\ell}_\gamma \left(x_1,\,d,\,t\right) + (1- p ) \tilde{\ell}_\gamma \left(x_2,\,d,\,t\right),$$
    where $x_1=\mu-\sigma\sqrt{\frac{1-p}{p}}$,  $x_2=\mu+\sigma\sqrt{\frac{p}{1-p}}$, and $0\le x_1\le x_2$. 
Thus, we obtain the tractable reformulation for problem \eqref{eq:main} with $\rho=e_\beta$, $\beta\in [1/2,1]$. In particular, if $\nu\le 1+\theta$, then $\gamma\nu\le 1+\theta$ for $\gamma\in[1/\nu,1]$. Hence, $\tilde{\ell}_\gamma(x,d,t)$ is decreasing in $d\ge 0$ for every $\gamma\in[1/\nu,1]$, and therefore the optimal deductible $d^*=\infty$. 
\end{proof}
\subsection*{EC.2 Proof of Section \ref{main-wasser}}
\subsection*{EC.2.1 Proof of Section \ref{main-wasser-1}}
\begin{proof}[Proof of Proposition \ref{switch_wasser}]
    Substituting the definition of ROCE risk measure into problem \eqref{eq:main-wasser-inner}, we obtain that $$\sup_{F \in \mathcal{B}_{\varepsilon}(F_0)} \rho^F\left(X \wedge d\right)+(1+\theta)\E^F\left[\left(X-d\right)_+\right] =\sup_{F \in \mathcal{B}_{\varepsilon}(F_0)}\sup_{\ell \in \Gamma} \inf_{t \in \R} \E^F\left[\tilde{\ell}\left(X,d,t\right)\right].$$ Then, similar to the proof of Theorem \ref{thm-rho_p}, we have 
    \begin{align*}
     \sup_{F \in \mathcal{B}_{\varepsilon}(F_0)}\sup_{\ell \in \Gamma}  \inf_{t \in \R} \E^F\left[\tilde{\ell}(X,d,t)\right]=&\sup_{\ell \in \Gamma} \sup_{F \in \mathcal{B}_{\varepsilon}(F_0)} \inf_{t \in \R} \E^F\left[\tilde{\ell}(X,d,t)\right]\\
     =&\sup_{\ell \in \Gamma} \sup_{F \in \mathcal{B}_{\varepsilon}(F_0)} \inf_{t \in [t_{\ell,1},t_{\ell,2}]} \E^F\left[\tilde{\ell}(X,d,t)\right].
\end{align*}
Since the Wasserstein ball is a convex set, $[t_{\ell,1},t_{\ell,2}]$ is an interval independent of $F$, and the objective function is linear in distribution and convex in $t$, by Lemma \ref{le1}, it holds that for $\ell\in\Gamma$, 
\begin{align*}
    \sup_{F \in \mathcal{B}_{\varepsilon}(F_0)} \inf_{t \in \R} \E^F\left[\tilde{\ell}(X,d,t)\right]=&\sup_{F \in \mathcal{B}_{\varepsilon}(F_0)} \inf_{t \in [t_{\ell,1},t_{\ell,2}]} \E^F\left[\tilde{\ell}(X,d,t)\right]\\=&\inf_{t \in [t_{\ell,1},t_{\ell,2}]}\sup_{F \in \mathcal{B}_{\varepsilon}(F_0)}  \E^F\left[\tilde{\ell}(X,d,t)\right]\\
     \ge& \inf_{t \in \mathbb{R}}\sup_{F \in \mathcal{B}_{\varepsilon}(F_0)} \E^F\left[\tilde{\ell}(X,d,t)\right].
\end{align*}
The reverse direction of the above inequality is trivial. Thus, we complete the proof.
\end{proof}
\begin{proof}[Proof of Theorem \ref{pro_wasser_ball_general}]
    By Proposition \ref{switch_wasser}, we have 
    \begin{align*}
     \sup_{F \in \mathcal{B}_{\varepsilon}(F_0)} \rho^F\left(X \wedge d\right)+(1+\theta)\E^F\left[\left(X-d\right)_+\right] 
     =  \sup_{\ell \in \Gamma} \inf_{t \in \R}\, \sup_{F \in \mathcal{B}_{\varepsilon}(F_0)} \E^F\left[\tilde{\ell}\left(X,d,t\right)\right],
\end{align*}
where $\tilde{\ell}(x,d,t)$ is defined as \eqref{c_ga_func}. Recall that for $\ell\in\Gamma$, $\ell(y)=\max_{k\le K}a_{\ell,k}y+b_{\ell,k}$, where $0\le a_{\ell,1}<\ldots<a_{\ell,K}$, and that there exist $-\infty=h_0<h_1<\ldots<h_{K}<h_{K+1}=\infty$ such that  $\ell(y)=a_{\ell,k}y+b_{\ell,k}$ for $y\in (h_{k-1},h_k]$. Fix $d\ge 0$. If $t\in[d-h_k,\,d-h_{k-1})$ (equivalently, $d-t\in(h_{k-1},h_k]$), then $\tilde{\ell}(x,d,t)$ can be rewritten in the following two cases:
(i) $a_{\ell,k}\le 1+\theta$ and (ii) $a_{\ell,k}> 1+\theta$.

\begin{itemize}
    \item[(i)] If $a_{\ell,k}\le 1+\theta$, $\tilde{\ell}(x,d,t)=\max_{j\le k+1}\tilde{\ell}_j(x,d,t)$, where $\tilde{\ell}_j(x,d,t)=t+a_{\ell,j}(x-t)+b_{\ell,j}$ for $j=1,\ldots,k$ and $\tilde{\ell}_{k+1}(x,d,t)=t+a_{\ell,k}(d-t)+b_{\ell,k}+(1+\theta)(x-d)$.
    \item[(ii)] If $a_{\ell,k}> 1+\theta$, $\tilde{\ell}(x,d,t)=\max_{j\le k}\tilde{\ell}_j(x,d,t)$, where $\tilde{\ell}_j(x,d,t)=t+a_{\ell,j}(x\wedge d-t)+b_{\ell,j}$ for $j=1,\ldots,k-1$ and $\tilde{\ell}_k(x,d,t)=t+a_{\ell,k}(x\wedge d-t)+b_{\ell,k}+(1+\theta)(x-d)_+$.
\end{itemize}
Thus, we find that $\tilde{\ell}(x,d,t)=\max_{j}\tilde{\ell}_j(x,d,t)$ and that $\tilde{\ell}_j(x,d,t)$ is concave in $x$. Applying Theorem 8 in \cite{KENS19}, we have 
\begin{align}
     &\sup_{\ell \in \Gamma} \inf_{t \in \R}\, \sup_{F \in \mathcal{B}_{\varepsilon}(F_0)} \E^F\left[\tilde{\ell}(X,d,t)\right]\notag\\
     =&
\left\{\begin{array}{ll}\sup_{\ell\in\Gamma}\inf _{t\in\mathbb{R},\lambda\ge 0, s_i, z_{i, j}, v_{i, j}}  \lambda \varepsilon^p+\frac{1}{N} \sum_{i=1}^N s_i & \\
~~~~~~~~~\text {\rm s.t. }  {\left[-\tilde{\ell}_j(\cdot,d,t)\right]^*\left(z_{i, j}-v_{i ,j}\right)+\sigma_{\Xi}\left(v_{i, j}\right)-z_{i ,j} \widehat{x}_i+\phi(q)\lambda \left|\frac{z_{i,j}}{\lambda}\right|^q \leq s_i} &\forall i \leq N,~\forall j.\end{array}\right.
\label{worst-expectation}
\end{align}
Here $[-\tilde{\ell}_j(\cdot,d,t)]^*$ denotes the conjugate function of $-\tilde{\ell}_j(\cdot,d,t)$, defined by $[-\tilde{\ell}_j(\cdot,d,t)]^*(y)=\sup_{x\in\R}\{xy+\tilde{\ell}_j(x,d,t)\}$. Moreover, $\sigma_{\Xi}(\cdot)$ is the support function of set $\Xi$, defined by $\sigma_{\Xi}(v)=\sup_{\xi\in\Xi}v^\top \xi$, and $\phi(q)=(q-1)^{(q-1)}/q^q$ for $q>1$, with $\phi(1)=1$. In particular, for $\Xi=\mathbb{R}_+$, we have $\sigma_{\Xi}(v)=\iota_{\mathbb{R}_-}(v)$, where $\iota_{A}(a)=0$ if $a\in A$ and $\iota_{A}(a)=\infty$ otherwise. 
By the definition of convex conjugate function, for given $d$, if $t\in[d-h_k,d-h_{k-1})$, that is, $d-t\in (h_{k-1},h_{k}]$, then the explicit form of convex conjugate function of $-\tilde{\ell}_j(x,d,t)$ with respect to $x$ is given by the following.
\begin{itemize}
    \item[(i)] If $a_{\ell,k}\le 1+\theta$, we have $[-\tilde{\ell}_j(\cdot,d,t)]^*(y)=\left(1-a_{\ell, j}\right) t+b_{\ell, j}+\iota_{\left\{-a_{\ell, j}\right\}}(y)$ for $j=1,\ldots,k$ and $[-\tilde{\ell}_{k+1}(\cdot,d,t)]^*(y)=(1-a_{\ell,k})t+(a_{\ell,k}-(1+\theta))d+b_{\ell,k}+\iota_{\{-(1+\theta)\}}(y)$.
    \item[(ii)] If $a_{\ell,k}> 1+\theta$, we have $[-\tilde{\ell}_j(\cdot,d,t)]^*(y)=\left(1-a_{\ell, j}\right) t+b_{\ell, j}+\left(y+a_{\ell, j}\right)d+\iota_{[-a_{\ell,j},  0]}(y)$ for $j=1,\ldots,k-1$ and $[-\tilde{\ell}_k(\cdot,d,t)]^*(y)=\left(1-a_{\ell, k}\right) t+b_{\ell, k}+\left(y+a_{\ell, k}\right)d+\iota_{[-a_{\ell,k},-(1+\theta)]}(y)$.
\end{itemize}
Substituting to \eqref{worst-expectation}, we complete the proof. 
\end{proof}
\subsection*{EC.2.2 Proof of Section \ref{wasser_sample}}
\begin{proof}[Proof of Proposition \ref{wasser_CVaR}]
	By Proposition \ref{switch_wasser}, problem \eqref{eq:main_wasser} with $\rho (X)=\eta_1\E[X]+(1-\eta_1)\CVaR_{\xi}(X)$, reduces to
	\begin{align*}
		\min_{0\le t\le d}\sup_{F \in \mathcal{B}_{\varepsilon}(F_0)} \E^F[\tilde{\ell}(X,d,t)],
	\end{align*}
	where $\tilde{\ell}(x,d,t) = t+\eta_2(x\wedge d-t)_+-\eta_1(t-x\wedge d)_++(1+\theta)(x-d)_+.$
	\begin{itemize}
		\item[(1)] If $\eta_2\le 1+\theta$, then $\tilde{\ell}(x,d,t)$ is decreasing in $d\in \R_+$, and thus, the optimal deductible $d^*=\infty$.
		
		\item[(2)]  If $\eta_2> 1+\theta$, we have $\tilde{\ell}(x,d,t)=\max\{\tilde{\ell}_1(x,d,t),\tilde{\ell}_2(x,d,t)\}$, where $\tilde{\ell}_1(x,d,t)=t+\eta_1(x-t)$ and $\tilde{\ell}_2(x,d,t)=t+\eta_2(x\wedge d-t)+(1+\theta)(x-d)_+$.  Recall that $\iota_{A}(\cdot)$ is the indicator function of the set $A$, defined by $\iota_{A}(a)=0$ if $a\in A$ and $\iota_{A}(a)=\infty$ otherwise. One can verify that $\tilde{\ell}_1(x,d,t)$ and $\tilde{\ell}_2(x,d,t)$ are concave in $x$, $[-\tilde{\ell}_{1}(\cdot,d,t)]^*(y)=\left(1-\eta_1\right) t+\iota_{\left\{-\eta_1\right\}}(y)$ and 
$[-\tilde{\ell}_2(\cdot,d,t)]^*(y)=\left(1-\eta_2\right) t+\left(y+\eta_2\right)d+\iota_{[-\eta_2,-(1+\theta)]}(y) $. Applying Theorem 8 in \cite{KENS19}, we obtain the finite-dimensional reformulation directly. 
	\end{itemize}
	Combining the above two cases,  we complete the proof.
\end{proof}
\begin{proof}[Proof of Proposition \ref{expectile_wasser}]
	By Proposition \ref{switch_wasser}, problem \eqref{eq:main_wasser} with $\rho =e_\beta$ reduces to
	\begin{align*}
		\min_{d\ge 0}\sup_{\gamma\in[1/\nu,1]}\inf_{t\in [0,d]}\sup_{F \in \mathcal{B}_{\varepsilon}(F_0)} \E^F[\tilde{\ell}_\gamma(X,d,t)],
	\end{align*}
	where $\tilde{\ell}_\gamma(x,d,t) = t+\gamma\nu(x\wedge d-t)_+-\gamma(t-x\wedge d)_++(1+\theta)(x-d)_+.$
	\begin{itemize}
		\item[(1)] If $\nu\le(1+\theta)$, then for $\gamma\in[1/\nu,1]$, $\tilde{\ell}_\gamma(x,d,t)$ is decreasing in $d\in \R_+$, and thus, the optimal deductible $d^*=\infty$.
		
		\item[(2)]  If $\nu> (1+\theta)$, we consider the following two cases.
        \begin{itemize}
            \item[(i)] For $\gamma\in[1/\nu,(1+\theta)/\nu]$, we have $\tilde{\ell}_\gamma(x,d,t)=\max\{\tilde{\ell}_{\gamma,1}(x,d,t),\tilde{\ell}_{\gamma,2}(x,d,t),\tilde{\ell}_{\gamma,3}(x,d,t)\}$, where $\tilde{\ell}_{\gamma,1}(x,d,t)=t+\gamma(x-t)$, $\tilde{\ell}_{\gamma,2}(x,d,t)=t+\gamma\nu(x-t)$ and $\tilde{\ell}_{\gamma,3}(x,d,t)=t+\gamma\nu(d-t)+(1+\theta)(x-d)$. Recall that $\iota_{A}(\cdot)$ is the indicator function of the set $A$, defined by $\iota_{A}(a)=0$ if $a\in A$ and $\iota_{A}(a)=\infty$ otherwise. One can verify that $\tilde{\ell}_{\gamma,1}(x,d,t)$, $\tilde{\ell}_{\gamma,2}(x,d,t)$ and $\tilde{\ell}_{\gamma,3}(x,d,t)$ are concave in $x$, $[-\tilde{\ell}_{\gamma,1}(\cdot,d,t)]^*(y)=\left(1-\gamma\right) t+\iota_{\left\{-\gamma\right\}}(y)$, 
$[-\tilde{\ell}_{\gamma,2}(\cdot,d,t)]^*(y)=\left(1-\gamma\nu\right) t+\iota_{\{-\gamma\nu\}}(y)$ and $[-\tilde{\ell}_{\gamma,3}(\cdot,d,t)]^*(y)=\left(1-\gamma\nu\right) t+(\gamma\nu-(1+\theta))d+\iota_{\{-(1+\theta)\}}(y)$. Applying Theorem 8 in \cite{KENS19}, we obtain the corresponding finite-dimensional reformulation. 
            \item[(ii)] For $\gamma\in((1+\theta)/\nu,1]$, we have $\tilde{\ell}_{\gamma}(x,d,t)=\max\{\tilde{\ell}_{\gamma,1}(x,d,t),\tilde{\ell}_{\gamma,2}(x,d,t)\}$, where $\tilde{\ell}_{\gamma,1}(x,d,t)=t+\gamma(x-t)$ and $\tilde{\ell}_{\gamma,2}(x,d,t)=t+\gamma\nu(x\wedge d-t)+(1+\theta)(x-d)_+$. One can verify that $\tilde{\ell}_{\gamma,1}(x,d,t)$ and $\tilde{\ell}_{\gamma,2}(x,d,t)$ are concave in $x$, $[-\tilde{\ell}_{\gamma,1}(\cdot,d,t)]^*(y)=\left(1-\gamma\right) t+\iota_{\left\{-\gamma\right\}}(y)$ and 
$[-\tilde{\ell}_{\gamma,2}(\cdot,d,t)]^*(y)=\left(1-\gamma\nu\right) t+\left(y+\gamma\nu\right)d+\iota_{[-\gamma\nu,-(1+\theta)]}(y) $. Applying Theorem 8 in \cite{KENS19}, we obtain the corresponding finite-dimensional reformulation. 
        \end{itemize}
	\end{itemize}
	Combining the above two cases,  we complete the proof.
\end{proof}
\end{document}